\theoremstyle{plain}
\newtheorem{theorem}{Theorem}[section]
\newtheorem{cor}[theorem]{Corollary}
\newtheorem{def-thm}[theorem]{Definition-Theorem}
\newtheorem{lemma}[theorem]{Lemma}
\newtheorem{defi}[theorem]{Definition}
\theoremstyle{definition}
\newtheorem{remark}[theorem]{Remark}
\def\min{\mathop{\mathrm{min}}}
\begin{document}
\title[Equidistribution Theory of Algebroid Mappings]{Algebroid Mappings and Their  Equidistribution Theory}
\author[X.-J. Dong]
{Xianjing Dong}

\address{School of Mathematical Sciences \\ Qufu Normal University \\ Qufu, Jining, Shandong, 273165, P. R. China}
\email{xjdong05@126.com}



\subjclass[2010]{32H30; 32H25} 
\keywords{Algebroid mappings; equidistribution;  second main theorem;  K\"ahler manifolds.}
\date{}
\maketitle \thispagestyle{empty} \setcounter{page}{1}

\begin{abstract}  
In this paper, the concept  of algebroid mappings of complex manifolds is introduced 
based on that a large number of   complex systems of PDEs admit   multi-valued  solutions that  
can be defined by a system of independent algebraic equations over the field of meromorphic functions. 
It is well-known that Nevanlinna theory is  an important     tool  in 
complex ODE theory. To develop a similar     tool applied to the study of  algebroid solutions of complex systems of PDEs, 
 one  explores  the  equidistribution  theory of algebroid mappings. 
 Via   uniformizating an  algebroid mapping,   we 
  obtain   a second main theorem
     of algebroid mappings from a complete K\"ahler manifold 
           into a complex  projective manifold provided that   some  certain conditions are imposed. 
  \end{abstract}

\tableofcontents
\medskip

\newpage
\setlength\arraycolsep{2pt}
\medskip

\section{Introduction}

\subsection{Motivation}~

Algebroid solutions of complex ODEs have been studied extensively  since the notion of  algebroid functions  was  introduced by H. Poincar\'e. 
 It has been discovered that  many  complex  ODEs have   algebroid solutions. 
For instance,   $4zW'^2-1=0$   is of   an algebroid solution $W=\sqrt{z}.$
What about  an algebroid solution of a complex system of   PDEs?
   Let $F_i(z, W_1,\cdots, W_d)$   $(i=1,\cdots,k)$ be   functions  
   of   $W_1,\cdots,W_d$   and their partial derivatives on $\mathbb C^m$ 
      over the field $\mathscr M(\mathbb C^m)$ of all 
   meromorphic functions on $\mathbb C^m,$ 
   where  $z=(z_1,\cdots,z_m)\in\mathbb C^m.$
    A solution  $(W_1, \cdots, W_n)$ of a  given  complex system of PDEs: 
	\begin{equation}\label{d0}
	F_i\big(z, W_1,\cdots,W_d\big)=0, \ \ \ \ i=1,\cdots,k
	\end{equation}
is called an algebroid solution, if it is defined by a system of $d$  independent   algebraic  equations 
\begin{equation}\label{d1}
P_i\big(z, W_1,\cdots, W_d\big)=0, \ \ \ \   i=1,\cdots, d
\end{equation}
in $W_1,\cdots, W_d$  over $\mathscr M(\mathbb C^m).$  
 Here, the ``independent" means that  
$$\left|\frac{\partial(P_1,\cdots, P_d)}{\partial(W_1,\cdots,W_d)}\right|\not\equiv0.$$
Note that  a  meromorphic solution of Eq. (\ref{d0}) is always    an algebroid solution. 
To  our knowledge,   a large number of  complex systems of  PDEs, particularly,  lots of complex systems of  algebraic PDEs,     have  
non-meromorphic   algebroid solutions.  

\noindent\textbf{Examples}
\begin{itemize} 
\item The complex system of PDEs 
			   $$\begin{cases}
	\frac{\partial W}{\partial z_1}=\frac{z_2}{2W} \\
	  \frac{\partial W}{\partial z_2}=\frac{z_1}{2W} \\
	    \frac{\partial^2 W}{\partial z_1\partial z_2}=\frac{1}{4W}
	   \end{cases}$$
on $\mathbb C^2$ has an algebroid solution  $$W=\sqrt{z_1z_2}.$$ 
\item The complex system of PDEs
   $$\begin{cases}
W_1\frac{\partial^2 W_1}{\partial z_1\partial z_2}+W_2\frac{\partial^2 W_2}{\partial z_1\partial z_2}+\frac{\partial W_1}{\partial z_1}\frac{\partial W_1}{\partial z_2}
+\frac{\partial W_2}{\partial z_1}\frac{\partial W_2}{\partial z_2}-\frac{1}{2}=0 \\
W_2\frac{\partial^2 W_1}{\partial z_1\partial z_2}+W_1\frac{\partial^2 W_2}{\partial z_1\partial z_2}+\frac{\partial W_1}{\partial z_1}\frac{\partial W_2}{\partial z_2}
+\frac{\partial W_2}{\partial z_1}\frac{\partial W_1}{\partial z_2}-1=0
   \end{cases}$$
on $\mathbb C^2$ has an algebroid solution  $(W_1, W_2)$ defined by  
$$W_1^2+W_2^2=z_1z_2, \ \ \ \    W_1W_2=z_1+z_2.$$ 
\item The complex system of PDEs 
			   $$\begin{cases}
	\frac{\partial^2 W_1}{\partial z_1\partial z_2}=\frac{1}{W_2} \\
	 \frac{\partial^2 W_2}{\partial z_1\partial z_2}=\frac{1}{W_1}
	   \end{cases}$$
on $\mathbb C^2$ has  algebroid solutions $(W_1, W_2)$ of the form 
  $$W_1=W_2=\pm2\sqrt{(\lambda z_1+c_1)(\lambda^{-1}z_2+c_2)},$$
  where $\lambda\not=0, c_1,c_2$ are arbitrary constants.
  \end{itemize} 

In the viewpoint of mappings, 
 any algebroid solution $(W_1,\cdots, W_d)$ of Eq. (\ref{d0})    corresponds  to  a multi-valued analytic  mapping
		$$F=\big[1: W_1:\cdots: W_d\big]: \ \mathbb C^m\to\mathbb P^d(\mathbb C).$$ 
	On the other hand, if the above $F$ is a multi-valued  mapping defined by Eq. (\ref{d1}), then $(W_1,\cdots, W_d)$ can be viewed  as an algebroid solution of a 
		complex system of  PDEs by differentiating  Eq. (\ref{d1}).  It is a   one-to-one correspondence.
This observation   motivates us to introduce the notion of algebroid mappings.

Let $M, N$ be  complex manifolds of complex dimensions $m, n,$ respectively. 
Let   $\mathscr M(M)$ denote the field of all  meromorphic functions on $M.$
We give the following definition of algebroid mappings (see Definition \ref{defi11} in Section \ref{s11}). 
	
\noindent\emph{Definition.}
 We say that a  multi-valued mapping  $F: M\to N$ is     an algebroid mapping,
    if  for  every point  $y\in f(x)$ with  all $x\in M,$ 
    $\xi_1\circ F,  \cdots, \xi_n\circ F$
 are   local solutions  of  a  system of $n$ independent  algebraic equations 
 $$P_i\big(x, W_1,\cdots, W_n\big)=0, \ \ \ \   i=1,\cdots, n$$
 in  $W_1,\cdots,W_n$  over $\mathscr M(M),$   where $\xi=(\xi_1,\cdots,\xi_n)$ is a  local holomorphic coordinate near $y.$

According to the definition,  we can see that algebroid mappings generalize the notions of  meromorphic mappings and algebroid functions.

\noindent\textbf{Examples}

	\begin{itemize} 
		\item  A  2-valued  rational algebroid mapping  
					 $$F=\left[\frac{\zeta_1}{\zeta_0}: \frac{\zeta_0\zeta_2}{\zeta_1^2}: \frac{\sqrt{\zeta_2}}{\sqrt{\zeta_1}}\right]: \ \mathbb P^2(\mathbb C)\to \mathbb P^2(\mathbb C).$$
						\item A  6-valued  transcendental algebroid curve 
					 $$F=\left[z: \sqrt{z}: \sqrt[3]{z}:e^z\right]: \ \mathbb C\to \mathbb P^3(\mathbb C).$$
			\item A  12-valued transcendental  algebroid mapping 
							 $$F=\big[z_1:  \sin z_2: \sqrt{z_1}\sqrt[4]{z_2} : \sqrt[3]{z_3}e^{z_4}\big]: \ \mathbb C^4\to \mathbb P^3(\mathbb C).$$
		\end{itemize} 	

It is well-known that Nevanlinna theory \cite{ru} plays a useful  role in the study of meromorphic and algebroid solutions of  complex ODEs (see \cite{H-X, Laine,  S-G}). 
It seems helpful to develop a value distribution theory of  algebroid mappings,  
as a theoretical tool of  investigating   algebroid solutions of a complex system of PDEs. 
Since R. Nevanlinna established  Nevanlinna theory,  many  scholars  such as 
G. R\'emoundos, H. Selberg, 
  E. Ullrich and G. Valiron, etc.  have been exploring    the value distribution theory of algebroid functions. 
  For   instance, G. R\'emoundos \cite{GR} proved  a Picard's theorem   which      
   says   that   
   a nonconstant $\nu$-valued algebroid function on $\mathbb C$   omits $2\nu$ distinct  values at most; and then,  
  G. Valiron  \cite{GV}  showed    without details 
      a five-value type theorem 
             which says   that  two nonconstant $\nu$-valued algebroid functions
   on $\mathbb C$
   are identical if they share $4\nu$+1 distinct  values ignoring multiplicities. 
     For  more details, we refer  the reader    to     J. Noguchi \cite{No0}, 
 Niino-Ozawa \cite{NO, Oz}, 
   J. Suzuki \cite{JS},  L. Selberg \cite{LS1, LS2}, N. Toda \cite{To1, To2} and  E. Ullrich \cite{EU};  and  see   
 Hu-Yang \cite{HY}, 
 He-Xiao \cite{H-X}, M. Ru \cite{ru0} and 
 Sun-Gao \cite{S-G}, etc. 

However, 
there is very  little   literature about   the value distribution theory of multi-valued analytic mappings, although the researchers  have a relatively profound  understanding of  single-valued analytic mappings (see \cite{No, ru, Sha}).
  In this  paper, we  shall explore  equidistribution theory  of algebroid mappings from a complete  K\"ahler manifold into a complex projective manifold.  
This is a generalization of Carlson-Griffiths equidistribution theory of holomorphic mappings from $\mathbb C^m$ into a complex projective manifold (see \cite{gri}). 

\subsection{Main Results}~

Let $M$ be a complex manifold, and $N$ a complex projective manifold. 

   \begin{theorem}[=Theorem \ref{cover}]
   A $\nu$-valued algebroid mapping $F: M\to N$ defines   a $\nu$-leaf complex manifold $\mathcal M_F,$ which  is a  $\nu$-sheeted ramified analytic covering of $M.$
      \end{theorem}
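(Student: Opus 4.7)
The plan is to build $\mathcal M_F$ locally as the zero locus of the defining algebraic system, patch these pieces together over $M$, and then normalize the resulting reduced analytic space.

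\textbf{Local models.} I would first fix a point $x_0\in M$ with a coordinate chart $(U,z)$. For each branch $y$ of $F(x_0)$ choose coordinates $\xi=(\xi_1,\ldots,\xi_n)$ on a neighborhood of $y$ in $N$. By the definition of algebroid mapping, $W_j=\xi_j\circ F$ satisfies a system $P_i(x,W_1,\ldots,W_n)=0$, $i=1,\ldots,n,$ of $n$ independent algebraic equations with coefficients in $\mathscr M(M)$. After clearing denominators and shrinking $U$, we may assume the $P_i$ are polynomials in $W$ with holomorphic coefficients on $U$. Consider the analytic subset
\begin{equation*}
\widetilde V_U:=\{(x,W)\in U\times\CC^n : P_1(x,W)=\cdots=P_n(x,W)=0\}.
\end{equation*}
The nondegeneracy condition $|\partial(P_1,\ldots,P_n)/\partial(W_1,\ldots,W_n)|\not\equiv 0$, together with the implicit function theorem, makes $\pi_U\colon\widetilde V_U\to U$ a finite holomorphic map, étale off the analytic hypersurface $D_U\subset U$ cut out by the discriminant/resultant of the system; the generic fiber cardinality is precisely $\nu$.

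\textbf{Gluing and normalization.} Next I would patch these local varieties. On overlaps in $M$ and across transition functions on $N$, the $\widetilde V_U$ fit together into a global reduced complex analytic space $\widetilde{\mathcal M}_F$ with a finite surjective projection $\pi\colon\widetilde{\mathcal M}_F\to M$; the freedom to choose coordinates $\xi$ on $N$ is absorbed into polynomial substitutions that preserve the variety. Let $\mathcal M_F$ be the normalization of $\widetilde{\mathcal M}_F$. Standard Grauert--Remmert theory then furnishes a normal complex analytic space, a finite proper holomorphic lift $\pi\colon\mathcal M_F\to M$ of degree $\nu$, and an analytic branch locus $B\subset M$ outside which $\pi$ is an honest $\nu$-sheeted étale cover. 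This already verifies the ``ramified analytic covering'' part of the statement.

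\textbf{Smooth structure.} The step I expect to be the main obstacle is promoting $\mathcal M_F$ from a normal analytic space to a \emph{complex manifold}. When $m=1$ this is automatic, because a normal branched cover of a Riemann surface is itself a Riemann surface; the classical Selberg--Ullrich construction of the Riemann surface of an algebroid function is exactly this. For $m\ge 2$ the normalization is only smooth in codimension one, so a priori singularities could persist in codimension $\ge 2$ along $B$. I would handle this by exploiting the specific polynomial form of the defining system: at each point of $\mathcal M_F$ lying over a branch point $x\in B$, the independence of the $P_i$ together with the Weierstrass preparation theorem lets one reduce, after a linear change of $W$, to a single Weierstrass polynomial in one variable over $U$, whose normalization is known to be smooth. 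Piecing these local uniformizers together equips $\mathcal M_F$ with a genuine complex-manifold atlas, giving it the structure of a $\nu$-leaf complex manifold, and completes the proof.
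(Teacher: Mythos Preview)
Your route via the incidence variety and normalization is genuinely different from the paper's: the paper builds $\mathcal M_F$ as a space of \emph{mapping elements} (germs of local branches of $F$), topologizes it through analytic continuation, and then writes down explicit charts by hand. Away from the branch set these are just the charts of $M$; at a non-singular branch point the paper uses the Puiseux expansion of each branch (its Theorem on $\lambda$-leaf elements) to produce a chart via the substitution $z_m\mapsto z_m^{1/\lambda}$, and checks directly that the transitions $\psi_{\tilde y}\circ\psi_{\tilde x}^{-1}$ are biholomorphic. Only after $\mathcal M_F^*=\mathcal M_F\setminus\mathcal M_{F,\mathrm{sing}}$ is shown to be a manifold does the paper treat the residual locus over the singular part of the branch divisor, which has codimension $\ge 2$, by a separate extension argument via $\pi$.

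Your smoothness step, by contrast, has a real gap. The assertion that reducing to a single Weierstrass polynomial in one fibre variable gives something ``whose normalization is known to be smooth'' is false for $m\ge 2$. Take the paper's own opening example $W^2=z_1z_2$ on $M=\CC^2$: the hypersurface $\{(z_1,z_2,W)\in\CC^3: W^2=z_1z_2\}$ is already normal (it is a hypersurface whose singular locus is the single point $0$, hence of codimension $2$, so Serre's criterion applies), yet it is singular at the origin---this is the $A_1$ surface singularity $\CC^2/\{\pm1\}$. No linear change of the $W$-variables helps, since there is only one. Thus normalization alone cannot deliver a complex manifold, and Grauert--Remmert only gives you a normal analytic space. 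What is missing from your plan is precisely the two-stage structure the paper uses: first exhibit explicit smooth charts along the \emph{smooth} part of the branch divisor (where the cover is locally cyclic and the model $(\hat z_m,z_m)\mapsto(\hat z_m,z_m^\lambda)$ is available), and then deal with the crossings of the branch divisor as a separate codimension-$\ge 2$ problem.
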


 \begin{cor}[=Corollary \ref{uniform}]\label{}
 An algebroid mapping $F: M\to N$ can be lifted to a meromorphic mapping $f:  \mathcal M_F\to N$ via the natural projection $\pi: \mathcal M_F\to M$  
such that $F=f\circ\pi^{-1}.$
\end{cor}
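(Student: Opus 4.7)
The plan is to build $f$ first on the unramified part of $\mathcal{M}_F$, where the covering splits and the branches of $F$ separate by construction, and then extend $f$ meromorphically across the ramification locus. By the preceding theorem, $\pi:\mathcal{M}_F\to M$ is a $\nu$-sheeted ramified covering; let $B\subset M$ be its branch locus (an analytic set of codimension $\ge 1$) and set $R=\pi^{-1}(B)$. Over $M\setminus B$ the projection $\pi$ is a local biholomorphism, so each $x\in M\setminus B$ admits a neighborhood $U$ with $\pi^{-1}(U)=\bigsqcup_{\alpha=1}^{\nu}U_\alpha$, where $\pi|_{U_\alpha}:U_\alpha\to U$ is biholomorphic. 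By the definition of an algebroid mapping, $F|_U$ splits into $\nu$ single-valued meromorphic branches $F_1,\dots,F_\nu:U\to N$, and $\mathcal{M}_F$ is constructed precisely so that the sheet $U_\alpha$ matches the branch $F_\alpha$ under analytic continuation. Setting $f|_{U_\alpha}:=F_\alpha\circ\pi|_{U_\alpha}$ yields, by coherence of the glueing, a well-defined meromorphic mapping $f:\mathcal{M}_F\setminus R\to N$.

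Next I would extend $f$ across $R$. Since $R$ is an analytic subset of codimension $\ge 1$ in the complex manifold $\mathcal{M}_F$ and $N$ is a complex projective manifold, one may appeal to the classical extension theorem for meromorphic mappings into projective targets (in the Remmert--Stein/Siu form), provided the graph of $f$ has thin closure in $\mathcal{M}_F\times N$. This is the content of the algebroid hypothesis: each branch $F_\alpha$ is defined locally by a system of algebraic equations over $\mathscr{M}(M)$, so the homogeneous coordinates of $f$ on each sheet are locally meromorphic in the uniformizer of $\mathcal{M}_F$ near any point of $R$. Thus $f$ extends to a meromorphic mapping $\tilde f:\mathcal{M}_F\to N$.

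Finally, the identity $F=\tilde f\circ\pi^{-1}$ is essentially tautological from the construction: for any $x\in M$, the fiber $\pi^{-1}(x)=\{\tilde x_1,\dots,\tilde x_\nu\}$ is sent by $\tilde f$ to the $\nu$ values $\{F_1(x),\dots,F_\nu(x)\}$ constituting $F(x)$ as a multi-valued image, where at points of $B$ the values are counted with the appropriate ramification multiplicities.

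The main obstacle will be verifying the meromorphic extension across $R$. The subtlety is that $R$ may contain points where several branches of $F$ coincide and where the local uniformizer on $\mathcal{M}_F$ is a fractional power of a coordinate on $M$; one must confirm that pulling back an algebraic relation over $\mathscr{M}(M)$ by such a uniformizer yields single-valued meromorphic expressions on $\mathcal{M}_F$. This is exactly why $\mathcal{M}_F$ was introduced in the first place, so once the construction of $\mathcal{M}_F$ from the theorem is unpacked, the extension should follow without further difficulty.
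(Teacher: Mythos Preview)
Your argument is correct, but it takes a more external route than the paper. In the paper the corollary is stated as an immediate consequence of the construction of $\mathcal{M}_F$: the points of $\mathcal{M}_F$ \emph{are} (equivalence classes of) mapping elements $\tilde{x}=(x,H)$, so the lift is simply the evaluation map $f(\tilde{x}):=H(x)$. That this is meromorphic on all of $\mathcal{M}_F$, including over the branch locus, is built into the charts: near a $\lambda$-leaf element the local coordinate on $\mathcal{M}_F$ is precisely the Puiseux uniformizer $z_m^{1/\lambda}$, and Theorem~\ref{ses} shows each $u_i=\zeta_i\circ\jmath\circ H/\zeta_0\circ\jmath\circ H$ is a convergent power series in that uniformizer. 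So no separate extension step is needed.

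Your approach instead defines $f$ on $\mathcal{M}_F\setminus R$ and then extends across $R$ by appeal to Remmert--Stein/Siu. This works, but note that your justification of the extension (``the homogeneous coordinates of $f$ are locally meromorphic in the uniformizer of $\mathcal{M}_F$'') is exactly the Puiseux computation that already gives $f$ directly on $R$, so the abstract extension theorem is a detour. The advantage of the paper's tautological definition is that it makes transparent why $\mathcal{M}_F$ was built the way it was; the advantage of your phrasing is that it would survive if one only knew $\mathcal{M}_F$ as an abstract ramified cover rather than as a space of germs.
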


Let $(M, \alpha)$ be an $m$-dimensional  complete non-compact K\"ahler manifold with Ricci form $\mathscr R,$ and $(N, \omega)$ an $n$-dimensional   K\"ahler projective manifold.  Let $F: M\to N$ be a $\nu$-valued algebroid mapping.  
Referring  to  Sections \ref{61} and \ref{63}, one can define  Nevanlinna's functions $T_F(r, \omega), m_F(r, D), \overline{N}_F(r, D)$ and $T(r, \mathscr R)$
under different curvature conditions.  Since $N$ is projective, one can embed $N$ holomorphically into a complex projective space with Fubini-Study form $\omega_{FS}.$ 
Let  $\omega_{FS}|_N$ be the restriction of  $\omega_{FS}$ to $N.$
 Set
  $$\left[\frac{\omega_{FS}|_N}{\omega}\right]=\inf\big\{s>0: \  \omega_{FS}|_N\leq s\omega\big\}.$$

$a)$  $M$ has non-positive sectional curvature. Set  
\begin{equation}\label{ricci}
 \kappa(r)=\frac{1}{2m-1}\inf_{x\in B(r)}R(x),
 \end{equation}
   where   
    $$R(x)=\inf_{X\in T_xM, \ \|X\|=1}{\rm{Ric}}(X, X)$$
    is   the pointwise lower bound of Ricci curvature of $M,$ and $B(r)$ is the geodesic ball in $M$ centered at some  reference  point $o$ with radius $r.$

  \begin{theorem}[=Theorem \ref{main3}]  Let $M$ be a  
     complete non-compact  K\"ahler manifold   with non-positive sectional curvature.  
Let $(N,\omega)$ be a   K\"ahler  projective manifold  of  dimension not greater than  that  of $M.$ 
  Let $D_1,\cdots,D_q$ be effective divisors in general position on $N$ such that each $D_j$ is cohomologous to  $\omega.$
  Let  $F: M\rightarrow N$ be a  differentiably non-degenerate $\nu$-valued algebroid mapping.  
  Assume that $q\omega-{\rm Ric}(\omega^n)>0.$
  Then  for any  $\delta>0,$ there exists a subset $E_\delta\subset(1, \infty)$ of finite Lebesgue measure such that 
        \begin{eqnarray*}
&&\left(q-(2\nu-2)\left[\frac{\omega_{FS}|_N}{\omega}\right]\right)T_F(r,\omega)+T_F(r, K_N)+T(r, \mathscr R) \\
&\leq& \sum_{j=1}^q\overline N_F(r,D_j)+O\big(\log T_{F}(r, \omega)+\sqrt{-\kappa(r)}r+\delta\log r\big)
         \end{eqnarray*}
    holds for all  $r>1$ outside $E_\delta,$  where $\kappa$ is defined by $(\ref{ricci}).$          
\end{theorem}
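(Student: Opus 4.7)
The plan is to reduce to a Carlson--Griffiths-type second main theorem for a genuine meromorphic mapping on a complete Kähler manifold, by passing to the ramified covering constructed in the preceding Theorem, and then to translate the Nevanlinna functions back to $F$ at the cost of a Riemann--Hurwitz contribution. First I would form the uniformization $\pi\colon \mathcal{M}_F\to M$ and the lifted meromorphic map $f\colon \mathcal{M}_F\to N$, and equip $\mathcal{M}_F$ with the pullback Kähler form $\tilde\alpha=\pi^*\alpha$ away from the branch locus $\mathcal{R}\subset\mathcal{M}_F$. The key observation is that $\mathcal{M}_F$ is still complete (since $\pi$ is proper and a local isometry off $\mathcal{R}$) and its sectional curvature is still non-positive off $\mathcal{R}$, so the pointwise Ricci bound pulled back is controlled by the same function $\kappa(r)$ up to the $r$-dependence, once we check that the pulled back geodesic ball $\pi^{-1}(B(r))$ can be used to define the Nevanlinna functions of $f$.

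Once on $\mathcal{M}_F$, I would apply the Carlson--Griffiths second main theorem on a complete non-compact Kähler manifold with non-positive sectional curvature to the meromorphic map $f\colon \mathcal{M}_F\to N$ with respect to the divisors $D_1,\ldots,D_q$ in general position, cohomologous to $\omega$, and with $q\omega-\mathrm{Ric}(\omega^n)>0$. This yields an inequality of the shape
\begin{equation*}
qT_f(r,\omega)+T_f(r,K_N)+T(r,\widetilde{\mathscr R})\le \sum_{j=1}^q \overline{N}_f(r,D_j)+O\bigl(\log T_f(r,\omega)+\sqrt{-\kappa(r)}\,r+\delta\log r\bigr).
\end{equation*}
Here $\widetilde{\mathscr R}$ is the Ricci form of $\tilde\alpha$, which by a standard Riemann--Hurwitz-type computation equals $\pi^*\mathscr R-[\mathcal R]$ as currents, where $[\mathcal R]$ is the integration current on the ramification divisor of $\pi$.

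The next step is the translation dictionary. By the definitions in Sections~\ref{61} and \ref{63}, $T_f(r,\omega)=\nu\,T_F(r,\omega)$, $T_f(r,K_N)=\nu\,T_F(r,K_N)$, and $\overline{N}_f(r,D_j)\le \nu\,\overline{N}_F(r,D_j)$ after normalizing with the $1/\nu$ factor built into the algebroid Nevanlinna functions, so all terms transfer cleanly except for the contribution of the ramification divisor $\mathcal R$. The ramification divisor is cut out by the discriminant of the defining system $P_i(x,W)=0$, which as a divisor on $M$ has total degree bounded in cohomology by $(2\nu-2)$ times the class of a hyperplane section in the projective embedding of $N$ defined by $\omega_{FS}$. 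Since $\omega_{FS}|_N\le [\omega_{FS}|_N/\omega]\,\omega$ on $N$, pulling back and applying the first main theorem gives
\begin{equation*}
T(r,[\mathcal R])\le (2\nu-2)\left[\frac{\omega_{FS}|_N}{\omega}\right]T_F(r,\omega)+O(1),
\end{equation*}
which is precisely the correction that appears on the left-hand side of the stated inequality after transposition.

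Assembling the three ingredients and dividing by $\nu$ then produces the claimed inequality. The main obstacle I expect is the second step: justifying the Carlson--Griffiths second main theorem on $\mathcal M_F$ despite the ramification singularities of $\tilde\alpha$, which requires either a regularization argument (smoothing $\tilde\alpha$ near $\mathcal R$ and passing to the limit, controlling the defect via $[\mathcal R]$) or a direct proof on the smooth part using that $\mathcal R$ has measure zero while being careful that Jensen's formula and the logarithmic derivative lemma retain their form; balancing these two routes, together with checking that the curvature hypotheses descend to $\mathcal M_F$ with the same growth function $\kappa(r)$, is the delicate part. The rest—identifying the discriminant cohomology class with $(2\nu-2)$ times a hyperplane and handling the error term $O(\log T_F(r,\omega)+\sqrt{-\kappa(r)}r+\delta\log r)$—should follow by standard calibrations once the covering picture is set up.
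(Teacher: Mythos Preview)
Your proposal is correct and follows the same global strategy as the paper: lift $F$ to the meromorphic map $f\colon\mathcal M_F\to N$ on the $\nu$-sheeted covering, apply the meromorphic second main theorem (Theorem~\ref{main1}) there, translate the Nevanlinna functions back to $F$, and absorb the resulting branching term via Theorem~\ref{bran}. The technical obstacle you single out---that $\tilde\alpha=\pi^*\alpha$ degenerates along the ramification locus so Theorem~\ref{main1} does not apply verbatim---is exactly what the paper addresses in the opening paragraph of Section~\ref{63}, by perturbing $\pi^*g$ to $\pi^*g+n^{-2}\theta_n g_0$ and passing to the limit in the Jensen--Dynkin formula; your suggested regularization is the same idea.

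The one genuine difference is bookkeeping. You place the branching contribution in a Riemann--Hurwitz identity $\widetilde{\mathscr R}=\pi^*\mathscr R-[\mathcal R]$ for the (singular) Ricci form of $\pi^*\alpha$, so that $T(r,\widetilde{\mathscr R})=\nu\,T(r,\mathscr R)-\nu\,N_{\mathrm{bran}}(r,F)$ and the truncated counting functions transfer without correction. The paper instead \emph{defines} $\tilde{\mathscr R}=\pi^*\mathscr R$ (the smooth pullback, so $T_k(r,\tilde{\mathscr R})=T(r,\mathscr R)$ on each sheet) and lets the branching appear when comparing $\tfrac{1}{\nu}\sum_k\overline N_{f,k}(r,D_j)$ with $\overline N_F(r,D_j)$. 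Either accounting produces the same term $N_{\mathrm{bran}}(r,F)$ on the right-hand side, which is then bounded by $(2\nu-2)\big[\omega_{FS}|_N/\omega\big]T_F(r,\omega)+O(1)$ via Theorem~\ref{bran}. Your route is arguably cleaner conceptually (the correction sits where it ``should'' in Riemann--Hurwitz), while the paper's route avoids ever invoking the singular Ricci current and works sheet by sheet with only the smooth $\pi^*\mathscr R$.
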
  

  \begin{cor}[=Corollary \ref{hco1}]\label{}    Let $M$ be a  
     complete non-compact  K\"ahler manifold   with non-positive sectional curvature. 
  Let $H_1,\cdots, H_q$ be hyperplanes  of  $\mathbb P^n(\mathbb C)$ in general position.  
  Let  $F: M\rightarrow \mathbb P^n(\mathbb C)$    be a  differentiably non-degenerate $\nu$-valued algebroid mapping.  
 Assume that $m\geq n.$ Then  for any  $\delta>0,$ there exists a subset $E_\delta\subset(1, \infty)$ of finite Lebesgue measure such that 
        \begin{eqnarray*}
&& \left(q-2\nu-n+1\right)T_F(r,\omega_{FS}) +T(r, \mathscr R) \\
&\leq& \sum_{j=1}^q\overline N_F(r,H_j)+O\big(\log T_{F}(r, \omega_{FS})+\sqrt{-\kappa(r)}r+\delta\log r\big)
        \end{eqnarray*}
    holds for all  $r>1$ outside $E_\delta,$  where $\kappa$ is defined by $(\ref{ricci}).$          
\end{cor}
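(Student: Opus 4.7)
The plan is to deduce this Corollary directly by specializing Theorem \ref{main3} to the case $N=\mathbb{P}^n(\mathbb{C})$ equipped with $\omega=\omega_{FS}$, and then performing the bookkeeping on the various characteristic terms. I would begin by verifying the hypotheses of the main theorem one by one. Since we take $N$ to be projective space itself embedded in its own ambient $\mathbb{P}^n(\mathbb{C})$, the restriction $\omega_{FS}|_N$ is exactly $\omega=\omega_{FS}$, so
\[
\left[\frac{\omega_{FS}|_N}{\omega}\right]=1.
\]
Each hyperplane $H_j$ is an effective divisor representing the hyperplane class, hence cohomologous to $\omega_{FS}$, and ``general position'' in the hyperplane sense is precisely the general position hypothesis used in the main theorem. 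The assumption $m\ge n$ supplies $\dim N\le\dim M$, which is required both for differentiable non-degeneracy and for the main theorem to apply.

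Next I would translate the canonical bundle term. Because $K_{\mathbb{P}^n}=\mathcal{O}(-n-1)$, one has, as a Nevanlinna characteristic,
\[
T_F(r, K_N)=-(n+1)\,T_F(r,\omega_{FS}).
\]
The Ricci form of $\omega_{FS}^n$ is $(n+1)\omega_{FS}$, so the positivity condition $q\omega-\mathrm{Ric}(\omega^n)>0$ becomes $(q-n-1)\omega_{FS}>0$, i.e.\ $q\ge n+2$. This is automatic whenever the conclusion is non-vacuous, since otherwise the coefficient $q-2\nu-n+1$ on the left is already negative and the inequality follows trivially (or, equivalently, one may augment the family by additional hyperplanes to pass through the theorem and then discard them).

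Plugging these identifications into the conclusion of Theorem \ref{main3} yields
\[
\bigl(q-(2\nu-2)\bigr)T_F(r,\omega_{FS})-(n+1)\,T_F(r,\omega_{FS})+T(r,\mathscr{R})\le \sum_{j=1}^q\overline{N}_F(r,H_j)+O(\cdots),
\]
and combining the coefficients of $T_F(r,\omega_{FS})$ gives precisely $q-2\nu-n+1$, which is the stated inequality. The error term $O(\log T_F(r,\omega_{FS})+\sqrt{-\kappa(r)}\,r+\delta\log r)$ and the exceptional set $E_\delta$ of finite Lebesgue measure are inherited verbatim from the main theorem. The only genuinely substantive point — and in principle the only obstacle — is checking that the hyperplane setting does not require any hidden reformulation of the general position or the effective-divisor hypotheses; I expect this to be immediate, so the Corollary really is a direct specialization rather than an independent result.
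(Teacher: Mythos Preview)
Your proposal is correct and follows essentially the same approach as the paper. The paper's own proof consists of the single observation that $c_1(K_{\mathbb P^n(\mathbb C)})=-(n+1)c_1(\mathscr O(1))$ and then invokes Theorem \ref{main3}; you have simply made explicit the remaining bookkeeping (the identification $[\omega_{FS}|_N/\omega]=1$, the cohomology class of a hyperplane, and the verification of the Ricci positivity condition) that the paper leaves to the reader.
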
  

$b)$  $M$ is non-parabolic,  with non-negative Ricci curvature.  Set   
 \begin{equation}\label{Hr}
H(r)=\frac{V(r)}{r^2}\int_{r}^\infty\frac{tdt}{V(t)}, 
 \end{equation} 
 where $V(r)$ denotes  the Riemannian volume of the geodesic ball centered at $o$ with radius $r.$

  \begin{theorem}[=Theorem \ref{main4}]  Let $M$ be a  non-parabolic 
     complete non-compact  K\"ahler manifold   with non-negative  Ricci curvature.  
Let $(N,\omega)$ be a K\"ahler  projective manifold  of  dimension not greater than  that  of $M.$  
 Let $D_1,\cdots,D_q$ be effective divisors in general position on $N$ such that each $D_j$ is cohomologous to  $\omega.$
  Let  $F: M\rightarrow N$ be a  differentiably non-degenerate $\nu$-valued algebroid  mapping. 
    Assume that $q\omega-{\rm Ric}(\omega^n)>0.$
   Then  for any  $\delta>0,$ there exists a subset $E_\delta\subset(1, \infty)$ of finite Lebesgue measure such that 
        \begin{eqnarray*}
&&\left(q-(2\nu-2)\left[\frac{\omega_{FS}|_N}{\omega}\right]\right)T_F(r,\omega)+T_F(r, K_N)+T(r, \mathscr R) \\
&\leq& \sum_{j=1}^q\overline N_F(r,D_j)+O\big(\log T_{F}(r, \omega)+\log H(r)+\delta\log r\big)
         \end{eqnarray*}
    holds for all  $r>1$ outside $E_\delta,$  where $H$ is defined by $(\ref{Hr}).$          
\end{theorem}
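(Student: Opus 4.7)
The plan is to mirror the strategy of Theorem \ref{main3}, with the sectional curvature error estimate $\sqrt{-\kappa(r)}\,r$ replaced by an error of the form $\log H(r)$ reflecting the non-negative Ricci, non-parabolic geometry of $M$. First, I would invoke the uniformization (Theorem \ref{cover} and its corollary) to lift $F$ to a single-valued meromorphic mapping $f:\mathcal M_F\to N$ satisfying $F=f\circ \pi^{-1}$, where $\pi:\mathcal M_F\to M$ is the natural $\nu$-sheeted ramified covering. The Kähler form $\alpha$ on $M$ pulls back to $\pi^*\alpha$, which is Kähler off the branch locus; in particular, the non-negativity of Ricci curvature and the non-parabolicity of $M$ carry over to $\mathcal M_F$ in the sense needed for the Nevanlinna machinery below, and the exhaustion functions on $\mathcal M_F$ are induced from those on $M$.

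Next, I would set up the Carlson--Griffiths machinery on $\mathcal M_F$ for $f$ with respect to the divisors $D_1,\dots,D_q$. By the Poincaré--Lelong formula together with Jensen's formula applied on the level sets $\pi^{-1}(B(r))\subset\mathcal M_F$, one obtains the classical inequality
\[
\sum_{j=1}^{q}\bigl(T_{f}(r,D_j)-N_{f}(r,D_j)\bigr)+T_{f}(r,K_N)+T(r,\mathscr R)\ \leq\ S(r)+\text{error},
\]
where $S(r)$ is the singular Ricci term and $\text{error}$ comes from the logarithmic derivative lemma applied to a suitable Wronskian/jet section. Because every $D_j$ is cohomologous to $\omega$, the left-hand side rewrites in terms of $qT_F(r,\omega)$ via the projection formula $T_f(r,\pi^*(\cdot))=\nu\,T_F(r,\cdot)$ after renormalization.

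The passage from truncated to untruncated counting functions on $M$ is controlled using the branch divisor of $\pi:\mathcal M_F\to M$. A Riemann--Hurwitz style estimate, combined with the bound $\omega_{FS}|_N\leq[\omega_{FS}|_N/\omega]\,\omega$, converts the ramification contribution into the explicit correction $(2\nu-2)[\omega_{FS}|_N/\omega]\,T_F(r,\omega)$ that appears in the statement, exactly as in the proof of Theorem \ref{main3}.

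The main obstacle, and the place where this theorem genuinely differs from Theorem \ref{main3}, is Step~4: establishing the calculus/logarithmic-derivative lemma on the non-parabolic, Ricci-non-negative manifold $M$ with an error of the form $O\bigl(\log T_F(r,\omega)+\log H(r)+\delta\log r\bigr)$. Here one replaces the sectional-curvature comparison argument used in Theorem \ref{main3} (which produced $\sqrt{-\kappa(r)}\,r$) by a Green-function or heat-kernel argument: on a non-parabolic manifold with $\mathrm{Ric}\geq 0$, the minimal positive Green function decays at a rate dictated by the volume growth $V(r)$, and a standard integration by parts against $\int_r^\infty t\,dt/V(t)$ produces precisely the factor $H(r)$. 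Combined with a Borel--type lemma to extract the exceptional set $E_\delta$ of finite Lebesgue measure, this upgrades the defect inequality on $\mathcal M_F$ to the desired inequality on $M$, completing the proof.
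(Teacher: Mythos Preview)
Your proposal is correct and follows essentially the same approach as the paper: lift $F$ to a meromorphic $f$ on $\mathcal M_F$ via Corollary~\ref{uniform}, apply the meromorphic second main theorem (Theorem~\ref{main2}) leafwise with the Green-function based calculus lemma (Theorem~\ref{calculus}) supplying the $\log H(r)$ error, average over the $\nu$ sheets, and absorb the ramification via the branch-divisor estimate of Theorem~\ref{bran}. One minor point of precision: in the non-parabolic, Ricci non-negative setting the exhaustion is by the Green-function super-level sets $\Delta(r)$ of (\ref{delta}) rather than geodesic balls $B(r)$, and the error term is controlled via the Carlson--Griffiths singular volume form $\Psi$ (Lemma~\ref{grii}) rather than a Wronskian/jet section.
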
  

  \begin{cor}[=Corollary \ref{hco2}]\label{}  Let $M$ be a  non-parabolic 
     complete non-compact  K\"ahler manifold   with non-negative  Ricci curvature.  
  Let $H_1,\cdots, H_q$ be hyperplanes   of $\mathbb P^n(\mathbb C)$ in general position.  
  Let  $F: M\rightarrow \mathbb P^n(\mathbb C)$ be a  differentiably non-degenerate $\nu$-valued algebroid  mapping. 
  Assume that $m\geq n.$ Then  for any  $\delta>0,$ there exists a subset $E_\delta\subset(1, \infty)$ of finite Lebesgue measure such that 
     \begin{eqnarray*}
&& \left(q-2\nu-n+1\right)T_F(r,\omega_{FS}) +T(r, \mathscr R) \\
&\leq& \sum_{j=1}^q\overline N_F(r,H_j)+O\big(\log T_{F}(r, \omega_{FS})+\log H(r)+\delta\log r\big)
        \end{eqnarray*}
    holds for all  $r>1$ outside $E_\delta,$  where $H$ is defined by $(\ref{Hr}).$          
\end{cor}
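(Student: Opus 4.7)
The plan is to obtain this corollary as a direct specialization of Theorem \ref{main4} with $N = \mathbb{P}^n(\mathbb{C})$, $\omega = \omega_{FS}$, and divisors $D_j = H_j$ for $j = 1,\ldots,q$. No new machinery is required; the task is to verify the hypotheses and to substitute the standard numerical invariants of projective space.

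First I would check the hypotheses of Theorem \ref{main4}. The dimension condition $\dim N \leq \dim M$ is exactly $n \leq m$, which is assumed. Each hyperplane $H_j$ is an effective divisor in the class of $\mathcal{O}(1)$, hence cohomologous to $\omega_{FS}$; hyperplanes in general position in the classical sense satisfy the divisorial general-position condition required by the theorem. Differentiable non-degeneracy of $F$ and the fact that $M$ is non-parabolic with non-negative Ricci curvature are both part of the hypotheses.

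Next I would compute each ingredient entering the conclusion of Theorem \ref{main4}. Since $\omega = \omega_{FS}$, the ratio $[\omega_{FS}|_N/\omega]$ equals $1$, so the correction factor $(2\nu - 2)[\omega_{FS}|_N/\omega]$ simplifies to $2\nu - 2$. The canonical bundle satisfies $K_{\mathbb{P}^n(\mathbb{C})} = \mathcal{O}(-n-1)$, and by the functoriality of the characteristic function on line bundles this gives $T_F(r, K_N) = -(n+1)\, T_F(r, \omega_{FS})$. Collecting the coefficients of $T_F(r, \omega_{FS})$ on the left-hand side yields
$$q - (2\nu - 2) - (n+1) = q - 2\nu - n + 1,$$
which matches the stated expression; the remaining terms $T(r,\mathscr{R})$, $\sum_j \overline{N}_F(r,H_j)$, and the error transcribe verbatim.

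Finally I would verify the positivity hypothesis $q\omega - \mathrm{Ric}(\omega^n) > 0$. Since $\mathrm{Ric}(\omega_{FS}^n) = (n+1)\omega_{FS}$, this becomes $(q - n - 1)\omega_{FS} > 0$, i.e.\ $q \geq n+2$, which is precisely the range in which the corollary has nontrivial content; the substitution then produces the claim. For $q \leq n+1$ the coefficient $q - 2\nu - n + 1$ is strictly negative, so the inequality either reduces to a trivial bound or is recovered by applying Theorem \ref{main4} after adjoining $n+2-q$ auxiliary hyperplanes in general position (which only enlarges the right-hand side). There is no genuine analytic obstacle: Theorem \ref{main4} carries all the weight---the lifting of $F$ to a meromorphic mapping on $\mathcal{M}_F$ via Corollary \ref{uniform}, the second main theorem on the ramified cover, and the calculus lemma producing the $\log H(r) + \delta \log r$ error term---so the corollary is entirely a bookkeeping exercise.
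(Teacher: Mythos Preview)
Your proposal is correct and follows exactly the paper's approach: the paper derives Corollary~\ref{hco2} from Theorem~\ref{main4} by the single observation that $c_1(K_{\mathbb P^n(\mathbb C)})=-(n+1)c_1(\mathscr O(1))$ (stated once before Corollary~\ref{hco1} and then invoked again with ``Similarly''), which is precisely your substitution $T_F(r,K_N)=-(n+1)T_F(r,\omega_{FS})$ together with $[\omega_{FS}|_N/\omega]=1$. Your additional discussion of the positivity hypothesis $q\geq n+2$ and the trivial range $q\leq n+1$ is more careful than the paper, which leaves this implicit.
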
  

\section{Algebroid  Mappings}

\subsection{Algebroid  Mappings}~\label{s11}

Let $M$ be a complex  manifold of complex dimension $m.$   Let us recall that 
  a $\nu$-valued algebroid function $W$ on $M$ is defined by an irreducible algebraic equation  of degree $\nu$
  \begin{equation*}\label{eq0}
A_\nu(x) W^\nu+A_{\nu-1}(x) W^{\nu-1}+\cdots+A_0(x)=0, \ \ \  \ A_\nu\not\equiv0
\end{equation*}
with holomorphic functions $A_0, \cdots, A_\nu$  locally defined on $M,$ such that they well define a meromorphic mapping 
$[A_0: \cdots: A_\nu]: M\rightarrow\mathbb P^{\nu}(\mathbb C)$; or in other words,  it is 
 defined     by 
 an irreducible algebraic equation of degree $\nu$
  	 \begin{equation*}\label{eq0}
W^\nu+B_{\nu-1}(x) W^{\nu-1}+\cdots+B_0(x)=0
\end{equation*}
over the  field $\mathscr M(M)$  of all meromorphic functions on $M.$  
Note that     
$$B_j=\frac{A_j}{A_\nu}, \ \ \ \    j=0,\cdots,\nu-1.$$
It is well-known  that algebroid functions extend   the notions   of meromorphic functions and algebraic functions. 

Let us give  any  two   polynomials in $W$: 
\begin{eqnarray*}\label{}
P_1(x, W)&=&W^{\nu_1}+B_{1(\nu_1-1)}(x) W^{\nu_1-1}+\cdots+B_{10}(x),  \label{e1} \\
P_2(x, W)&=&W^{\nu_2}+B_{2(\nu_2-1)}(x) W^{\nu_2-1}+\cdots+B_{20}(x) \label{e2}
\end{eqnarray*}
 over $\mathscr M(M).$ 
 The \emph{resultant}  of $P_1(x,W)$ and $P_2(x,W)$ is defined   by  
$$R=\left | \begin{matrix}
1 & B_{1(\nu_1-1)}   & \cdots & B_{12} & B_{11} & B_{10}&0&0&\cdots&0 \\
0 & 1   & \cdots & B_{13} & B_{12} & B_{11}&B_{10}&0&\cdots&0 \\
\vdots&\vdots& & \vdots&\vdots &\vdots&  \vdots &\vdots & &\vdots \\
0 & 0   & \cdots & 1 & B_{1(\nu_1-1)} & B_{1(\nu_1-2)}&B_{1(\nu_1-3)}&B_{1(\nu_1-4)}&\cdots&B_{10}  \\
1 &B_{2(\nu_2-1)}   & \cdots & B_{22} & B_{21} &0&0&0&\cdots&0 \\
0 & 1   & \cdots & B_{23} & B_{22} &B_{21}&0&0&\cdots&0 \\
\vdots&\vdots& & \vdots&\vdots &\vdots&  \vdots &\vdots & &\vdots \\
0 & 0   & \cdots & 0 & 1 &B_{2(\nu_2-1)}&B_{2(\nu_2-2)}&B_{2(\nu_2-3)}&\cdots&B_{21}  \\
\end{matrix} \right |.$$
It is known that $R$  is  obtained   based on  Sylvester's elimination method. 
  Let $\alpha_1(x),\cdots,\alpha_{\nu_1}(x)$ and  $\beta_1(x),\cdots,\beta_{\nu_2}(x)$ be two groups of   roots of $P_1(x, W)$ and $P_2(x,W),$ respectively. 
Then  
$$R=\prod_{1\leq i\leq \nu_1, 1\leq j\leq \nu_2}(\alpha_i-\beta_j).$$
   Denote by $\mathscr A(M)$  the set of all algebroid functions on $M.$  We have:   
	
	\begin{theorem}\label{field}  Under the usual multiplication and addition, $\mathscr A(M)$ forms a 
	field  with  identity  element $1$ and zero element $0.$
	\end{theorem}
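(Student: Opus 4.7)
The plan is to exploit the resultant formalism set up immediately before the statement, using it to build, for each pair of algebroid functions, a polynomial over $\mathscr M(M)$ annihilating their sum or product, and then to factor it into irreducibles.

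\textbf{Step 1: Closure under addition and multiplication.} Let $W_1, W_2 \in \mathscr A(M)$ be defined by the irreducible polynomials $P_1(x,W), P_2(x,W)$ over $\mathscr M(M)$ of degrees $\nu_1, \nu_2$, and let $\alpha_1,\dots,\alpha_{\nu_1}$ and $\beta_1,\dots,\beta_{\nu_2}$ be their (local) roots. Following the resultant identity recalled above, I would introduce an auxiliary indeterminate $Z$ and form
\[
R_{+}(x,Z) \;=\; \mathrm{Res}_{W}\bigl(P_1(x,W),\, P_2(x,Z-W)\bigr), \qquad
R_{\times}(x,Z) \;=\; \mathrm{Res}_{W}\bigl(P_1(x,W),\, W^{\nu_2} P_2(x,Z/W)\bigr).
\]
By the product formula for resultants, $R_{+}(x,Z) = \prod_{i,j}(Z-\alpha_i-\beta_j)$ and $R_{\times}(x,Z) = \prod_{i,j}(Z-\alpha_i\beta_j)$ up to units in $\mathscr M(M)$. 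Thus $W_1 + W_2$ and $W_1 W_2$ each satisfy a monic polynomial relation over $\mathscr M(M)$. Since $\mathscr M(M)$ is a field, $\mathscr M(M)[Z]$ is a UFD, and I can decompose these polynomials into irreducible monic factors; the irreducible factor whose roots include the relevant sum or product is then the defining polynomial of an algebroid function, after clearing denominators to display coefficients $A_0,\dots,A_\nu$ that yield a meromorphic map to $\mathbb P^\nu(\CC)$.

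\textbf{Step 2: Inverses and identities.} The constants $0$ and $1$ trivially satisfy $W=0$ and $W-1=0$, and are $1$-valued algebroid functions. Additive inverses come from replacing $W$ by $-W$ in $P(x,W)$, which stays irreducible up to a sign. For multiplicative inverses, if $W\not\equiv 0$ is defined by the irreducible relation $W^\nu + B_{\nu-1}W^{\nu-1}+\cdots+B_0 = 0$, then irreducibility forces $B_0\not\equiv 0$, so dividing by $W^\nu B_0$ and substituting $W\mapsto 1/W$ gives the relation
\[
W^\nu + \tfrac{B_1}{B_0}\,W^{\nu-1} + \cdots + \tfrac{1}{B_0} \;=\; 0,
\]
irreducible by the same token. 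Commutativity, associativity, and distributivity are inherited from the field structure of the local branches, which are honest meromorphic functions.

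\textbf{Step 3: Completing the verification.} What remains is to check that the coefficients produced in Step 1 genuinely lie in $\mathscr M(M)$ and give a meromorphic map to projective space; this is automatic because resultants are polynomial expressions in the $B_{i,k}$, and meromorphicity is preserved under polynomial operations, followed by the monic normalization.

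\textbf{Main obstacle.} The delicate point is the interpretation of ``the'' sum or product of two multi-valued objects: $W_1+W_2$ a priori takes all $\nu_1\nu_2$ values $\alpha_i+\beta_j$, and the resultant $R_{+}$ may well be reducible over $\mathscr M(M)$. I would argue that each irreducible factor of $R_{+}$ (respectively $R_{\times}$) defines a bona fide algebroid function in the sense of the paper, and that the closure of $\mathscr A(M)$ under $+, \cdot$ is to be understood at the level of these irreducible factors — equivalently, at the level of a single branch, so that the field axioms are checked branch-by-branch once monodromy has selected one. Making this identification precise, without leaving the framework of irreducible defining equations, is the only nontrivial bookkeeping in the proof.
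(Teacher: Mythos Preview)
Your approach is essentially the same as the paper's: both use resultants (Sylvester elimination) to produce a polynomial over $\mathscr M(M)$ annihilating $W_1W_2$ or $W_1+W_2$, and handle inverses via $W\mapsto -W$ and $W\mapsto 1/W$ in the defining equation. You are in fact more careful than the paper on one point: the paper simply obtains $R(x,W)=0$ and declares closure, without addressing whether $R$ is irreducible, whereas you explicitly factor in the UFD $\mathscr M(M)[Z]$ and select the relevant irreducible factor. Your ``main obstacle'' about the ambiguity of which branch-sum is meant is a real subtlety that the paper also leaves implicit; both proofs tacitly interpret elements of $\mathscr A(M)$ as branches (equivalently, as elements of the algebraic closure of $\mathscr M(M)$, cf.\ Corollary~\ref{field1}), so the field axioms are verified branchwise and your reading is the correct one.
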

	
	\begin{proof}  We first show that  $\mathscr A(M)$ is closed under the usual multiplication and addition. 
	Let $W_1, W_2\in\mathscr A(M)$ be  defined by irreducible algebraic equations  $P_1(x, W_1), P_2(x, W_2),$ respectively. 
	Set  $W=W_1W_2.$  
	 Substitute  $W_2=W/W_1$ into $P_2(x, W_2),$ we obtain   $P_2(x, W/W_1)=0.$ 
								Regard    $$Q(x, W_1):=W_1^{\nu_2}P_2(x, W/W_1)$$ as a polynomial in $W_1,$ where $\nu_2=\deg P_2.$
						Applying  Sylvester's elimination method, we deduce that  
					$R(x, W)=0,$ where $R$ is  the resultant  of
										 $P_1(x, W_1)$ and  $Q(x, W_1).$ 
It means that $\mathscr A(M)$ is closed under the usual  multiplication. 
	Similarly,   $\mathscr A(M)$ is closed under the usual  addition. 
				Secondly, we prove that $1, 0$ are  identity  element  and zero element of $\mathscr A(M),$
 respectively.  It is clear  that   $1,0\in \mathscr A(M).$  For any $W\in\mathscr A(M)$  with defining  equation 
 $$P(x, W):=W^\nu+B_{\nu-1} W^{\nu-1}+\cdots+B_0=0$$
  over $\mathscr M(M),$ it  suffices to confirm   that $-W\in \mathscr A(M)$ and $1/W\in \mathscr A(M)$  for $W\not\equiv0.$ We see that     $-W$ satisfies  the  algebraic equation 
   $$(-W)^\nu+\tilde B_{\nu-1} (-W)^{\nu-1}+\cdots+\tilde B_0=0,$$
   where  $\tilde B_j=(-1)^{\nu-j}B_j$ with $j=0,\cdots,\nu-1.$ Hence, we have  $-W_1\in \mathscr A(M).$ 
Also,  we have   $1/W\in \mathscr A(M)$ for $W\not\equiv0,$ due to  $W^{-\nu}P(x, 1/W)=0.$
   Finally, the commutative law, associative law and distributive law can  be  examined  also easily. 
 		This completes the proof. 		
	\end{proof}

		\begin{cor}\label{field1}
	 $\mathscr A(M)$ is an algebraically  closed field,  and further an algebraic closure of $\mathscr M(M).$  
	\end{cor}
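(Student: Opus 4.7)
The plan is to establish two things: that the extension $\mathscr A(M)/\mathscr M(M)$ is algebraic, and that $\mathscr A(M)$ itself is algebraically closed. Together these yield that $\mathscr A(M)$ is an algebraic closure of $\mathscr M(M)$. The first statement is essentially built into the definition given in Section \ref{s11}: every $W\in\mathscr A(M)$ satisfies by construction an irreducible algebraic equation of finite degree over $\mathscr M(M)$, so the extension $\mathscr A(M)/\mathscr M(M)$ is algebraic.

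For algebraic closedness, I would use iterated resultants, in the spirit of the proof of Theorem \ref{field}. Consider a polynomial
$$P(x,W)=W^n+C_{n-1}(x)W^{n-1}+\cdots+C_0(x)$$
of degree $n\geq 1$ with coefficients $C_i\in\mathscr A(M)$, and pick a root $W$ of $P$ in some fixed algebraic closure of $\mathscr A(M)$. Each $C_i$ satisfies an irreducible equation $Q_i(x,C_i)=0$ over $\mathscr M(M)$. Treating $C_0$ as a formal indeterminate and regarding both $P$ and $Q_0$ as polynomials in $C_0$ with coefficients in $\mathscr M(M)[W,C_1,\ldots,C_{n-1}]$, I form the resultant in the variable $C_0$; this produces a polynomial $R_0$ in which $C_0$ no longer appears and which vanishes at the given specialization. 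Iterating this elimination with $C_1,\ldots,C_{n-1}$, after $n$ steps I obtain a polynomial $\tilde P(x,W)\in\mathscr M(M)[W]$ that annihilates $W$. Hence $W$ is algebraic over $\mathscr M(M)$.

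To finish, I select an irreducible factor of $\tilde P$ in $\mathscr M(M)[W]$ vanishing on $W$ and clear denominators to write it in the form $A_\nu(x)W^\nu+\cdots+A_0(x)=0$ with holomorphic coefficients $A_j$ such that $[A_0:\cdots:A_\nu]$ is a meromorphic map into $\mathbb P^\nu(\mathbb C)$. This is precisely the data defining an algebroid function in Section \ref{s11}, so $W\in\mathscr A(M)$, which gives algebraic closedness. Since $\mathscr A(M)/\mathscr M(M)$ is algebraic and $\mathscr A(M)$ is algebraically closed, $\mathscr A(M)$ is an algebraic closure of $\mathscr M(M)$.

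The main obstacle I anticipate is bookkeeping non-vanishing throughout the iterated elimination: each resultant step can \emph{a priori} collapse to the zero polynomial, and ensuring a nontrivial output at every stage requires carefully exploiting the irreducibility of each $Q_i$ together with the fact that $P$ is monic in $W$ and nontrivial. A secondary point, which is straightforward but ought to be checked, is that the final irreducible factor selected in $\mathscr M(M)[W]$ actually fulfills the meromorphic-map condition demanded by the algebroid function definition, so that the root $W$ is genuinely realized as an element of $\mathscr A(M)$ rather than only an abstract algebraic element over $\mathscr M(M)$.
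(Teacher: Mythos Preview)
Your proposal is correct and follows essentially the same route as the paper: both arguments eliminate the algebroid coefficients $C_0,\ldots,C_{n-1}$ one at a time via iterated Sylvester resultants to produce an algebraic equation for $W$ over $\mathscr M(M)$, and then invoke the definition of algebroid functions to conclude that $\mathscr A(M)/\mathscr M(M)$ is algebraic. Your additional care in selecting an irreducible factor at the end and in flagging the non-vanishing of the intermediate resultants is well placed; the paper's own proof is terser and does not address these points explicitly, so your version is if anything more complete.
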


\begin{proof} At first,  we prove  that  $\mathscr A(M)$ is an algebraically  closed field, i.e.,  any  $W$ defined by 
 an irreducible algebraic equation 
   	 \begin{equation*}
C_{\nu}(x)W^\nu+C_{\nu-1}(x) W^{\nu-1}+\cdots+C_0(x)=0, \ \ \ \  C_\nu\not\equiv0
\end{equation*}
over  $\mathscr A(M)$ is an element of   $\mathscr A(M).$  Using Theorem \ref{field},  the above equation can be written as  
   	 \begin{equation}\label{eqeq00}
W^\nu+\tilde C_{\nu-1}(x) W^{\nu-1}+\cdots+\tilde C_0(x)=0, 
\end{equation}
 where $\tilde C_j=C_j/C_\nu\in \mathscr A(M)$ for $j=0,\cdots,\nu-1.$ Assume that $\tilde C_j$ is defined by  an algebraic equation $P_j(x, \tilde C_j)=0$ over 
$\mathscr M(M)$ for $j=0,\cdots,\nu-1.$
By  Sylvester's elimination for $\nu$ times,  we  can eliminate $\tilde C_0,\cdots, \tilde C_{\nu-1}$ from Eq. $(\ref{eqeq00})$ in turn.   So, we finally get 
$Q(x, W)=0,$ which is an irreducible algebraic equation in $W$  over $\mathscr M(M).$ Hence,  $\mathscr A(M)$ is algebraically  closed. 
Making use of   the definition of  algebroid functions,   we   find  that   $[\mathscr A(M): \mathscr M(M)]$ is an algebraical extension, which deduces that 
 $\mathscr A(M)$ is an algebraic closure of $\mathscr M(M).$
\end{proof}

Let $N$ be a complex manifold of complex dimension $n.$  We shall  introduce the notion   of algebroid mappings as follows. 
	\begin{defi}
 A  multi-valued mapping  $F: M\to N$ is called    an algebroid mapping,
    if  for  every point  $y\in f(x)$ with  all $x\in M,$ 
    $\xi_1\circ F,  \cdots, \xi_n\circ F$
 are   local solutions  of  a  system of $n$ independent  algebraic equations 
 $$P_i\big(x, W_1,\cdots, W_n\big)=0, \ \ \ \   i=1,\cdots, n$$
 in  $W_1,\cdots,W_n$  over $\mathscr M(M),$   where $\xi=(\xi_1,\cdots,\xi_n)$ is a  local holomorphic coordinate around $y.$
\end{defi}

In particular, we treat the case when  $N$ is a complex projective  manifold.      Let  
  $\jmath: N\hookrightarrow\mathbb P^d(\mathbb C)$ be  the  holomorphic  embedding. 
Let  $\zeta=[\zeta_0:\cdots:\zeta_d]$ be      the homogenous coordinate of $\mathbb P^d(\mathbb C).$  Then, we follow an extrinsic  definition of algebroid mappings:  

\begin{defi}\label{defi11}
 Assume that  $N$ is a complex projective  manifold.  A multi-valued mapping $F: M\to N$ is called an algebroid mapping,   if 
$$\jmath\circ F=\big[\zeta_0\circ\jmath\circ F: \cdots: \zeta_d\circ\jmath\circ F\big]:  \   M\to  \mathbb P^d(\mathbb C)$$
 defines  an algebroid mapping, i.e., 
  $$W_i:=\frac{\zeta_i\circ\jmath\circ F}{\zeta_0\circ\jmath\circ F},  \ \ \ \     i=1,\cdots,d$$
 satisfy  a  system of $d$  independent  algebraic equations 
  $$P_i\big(x, W_1,\cdots, W_n\big)=0, \ \ \ \   i=1,\cdots, n$$
 in  $W_1,\cdots,W_n$  over $\mathscr M(M).$
\end{defi}

According to Corollary \ref{field1},  if  $W_1, \cdots, W_d$  are  defined by  some system of  $d$  independent  algebraic equations  over $\mathscr M(M),$  
then each $W_i$ is an algebroid function on $M.$
Thus,  we obtain an alternative, but better extrinsic  definition of algebroid mappings as follows.

\begin{defi}\label{def3}   
 A $\nu$-valued mapping $F: M\to N$ is called a $\nu$-valued algebroid mapping, if 
$$\jmath\circ F=\big[\zeta_0\circ\jmath\circ F:\cdots: \zeta_d\circ\jmath\circ F\big]:  \   M\to \mathbb P^d(\mathbb C)$$
 defines  a $\nu$-valued algebroid mapping, i.e., 
 $$W_i:=\frac{\zeta_i\circ\jmath\circ F}{\zeta_0\circ\jmath\circ F}, \ \ \ \    i=1,\cdots,d$$ 
 are   $\nu_i$-valued algebroid functions on $M,$ respectively,  such that  
 $\nu=\nu_1\cdots\nu_d.$ Here, $W_1,\cdots, W_d$  are  understood to be independent  in choosing  single-valued components  since they are in different coordinate  positions.
 \end{defi}

Combining  Definition \ref{def3} with Theorem \ref{field}, it is immediate  that 

 \begin{cor} Let $W_0, \cdots, W_n$ be     algebroid functions without poles and common zero divisors   on $M.$  Then 
 $$F=\big[W_0:\cdots: W_n\big]:  \  M\to\mathbb P^n(\mathbb C)$$
 is  an algebroid mapping.
  \end{cor}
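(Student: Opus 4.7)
My plan is to reduce the statement directly to Theorem \ref{field} (the field structure of $\mathscr A(M)$) and the extrinsic Definition \ref{def3}. The idea is that the only thing to verify is that, in each affine chart of $\mathbb P^n(\mathbb C)$ hit by $F$, the component functions of $F$ are algebroid; but this is immediate from the field axioms once a non-vanishing denominator is available.

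First, I would use the hypothesis ``without common zero divisors'' to set up local affine charts. Fix any $x\in M$; since $W_0,\dots,W_n$ have no common zero divisor, there is some index $i_0$ (depending on $x$, WLOG $i_0=0$) and some branch along which $W_0$ does not vanish identically near $x$. In the standard affine chart $U_0=\{\zeta_0\neq 0\}\subset\mathbb P^n(\mathbb C)$ with coordinates $\xi_j=\zeta_j/\zeta_0$ for $j=1,\dots,n$, the components of $F$ in this chart are $\xi_j\circ F=W_j/W_0$. The ``without poles'' hypothesis guarantees that each $W_j$ has well-defined algebroid branches near $x$, so the ratios $W_j/W_0$ are well-defined local algebroid functions wherever $W_0\not\equiv 0$.

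Second, I would invoke Theorem \ref{field}: since $\mathscr A(M)$ is a field with $1,0$ as identity and zero, and since $W_0,\dots,W_n\in\mathscr A(M)$ with $W_0\not\equiv 0$, each quotient $W_j/W_0$ again lies in $\mathscr A(M)$. In particular, each $W_j/W_0$ satisfies an irreducible monic algebraic equation
\[
  P_j(x,W_j/W_0)=0, \qquad j=1,\dots,n,
\]
over $\mathscr M(M)$. These $n$ equations are independent in the sense of Definition~\ref{def3}, because each $P_j$ involves its own distinct unknown, so the Jacobian determinant $|\partial(P_1,\dots,P_n)/\partial(W_1/W_0,\dots,W_n/W_0)|$ is just the product of the derivatives $P_j'$ with respect to their single variables, which is not identically zero for irreducible $P_j$. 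This is exactly the requirement of Definition~\ref{def3} applied to $F$ near $x$. Patching over $x\in M$ (using different $i_0$ at different points) yields that $F$ is a globally defined algebroid mapping into $\mathbb P^n(\mathbb C)$.

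The only real delicacy, and the step I expect to require the most care, is the local choice of $i_0$: one must ensure that the ``without poles and common zero divisors'' hypothesis really does guarantee, at \emph{every} point and on \emph{every} branch, the existence of an index $i_0$ whose corresponding $W_{i_0}$ is non-vanishing there. Given that algebroid functions are multi-valued, one has to check that the local branch structure is compatible with passage to the projective quotient; but this is precisely what ``no poles + no common zero divisors'' is designed to ensure, so no further technical machinery is needed.
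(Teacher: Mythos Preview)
Your approach is correct and matches the paper's: the paper simply states that the corollary is immediate from combining Definition~\ref{def3} with Theorem~\ref{field}, which is precisely your core argument that each $W_j/W_0\in\mathscr A(M)$ by the field property. Your local chart-patching discussion is superfluous, since Definition~\ref{def3} is a global condition (one fixed affine chart suffices once some $W_{i_0}\not\equiv 0$), but this does not affect correctness.
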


 \subsection{Quasi-regular  Mapping  Elements}~ 
 
 From now on, we  assume always  that    $N$ is a complex projective manifold.  For  technical reasons,   we  equip   $M$ with  a Hermitian metric. 
   Let $F: M\to N$ be a $\nu$-valued algebroid mapping, i.e.,    $W_i=\zeta_i\circ\jmath\circ F/\zeta_0\circ\jmath\circ F$ is  a $\nu_i$-valued algebroid function  on $M$  with defining  equation 
  \begin{equation}\label{eq}
A_{i\nu_i}(x)W^{\nu_i}_i+A_{i(\nu_i-1)}(x)W^{\nu_i-1}_i+\cdots+A_{i0}(x)=0, \ \ \  \  A_{i\nu_i}\not\equiv0 
\end{equation}
     for   $i=1,\cdots, d.$  Note that    $\nu=\nu_1\cdots\nu_d.$

\begin{defi}
 Let $H$ be a local mapping into $N$ defined in a neighborhood of a point  $x_0\in M.$  A pair $(x_0, H)$ is called a regular mapping element,  if $H$ is holomorphic in a neighborhood of $x_0;$ and called a 
quasi-regular mapping element,  if $H$ is meromorphic in a neighborhood of $x_0.$  
A pair   $(x_0, H)$ is said to be  a mapping element of $F,$ if it is  subordinate to Eq. $(\ref{eq}),$ i.e.,   $W_i\circ\jmath\circ H$ is  a local   solution of Eq. $(\ref{eq})$ in a  neighborhood of $x_0$ for $i=1,\cdots,d.$  
\end{defi}

For  $i=1,\cdots, d,$  put   
\begin{equation}
\Psi_i(x, W_i):=A_{i\nu_i}(x)W^{\nu_i}_i+A_{i(\nu_i-1)}(x)W^{\nu_i-1}_i+\cdots+A_{i0}(x). 
\end{equation}
 Then 
  \begin{eqnarray*}
\Psi_{i, i}(x,W_i)&:=&\frac{\partial\Psi_i(x,W_i)}{\partial W_i} \\
&=&\nu_i A_{i\nu_i}(x)W^{\nu_i-1}_i+(\nu_i-1)A_{i(\nu_i-1)}(x)W^{\nu_i-2}_i+\cdots+A_{i1}(x).
  \end{eqnarray*}

\begin{theorem}\label{thm1} Let $(x_0, y_0)\in M\times N$ be  a pair such   that 
$$\Psi_i(x_0, c_i)=0, \ \ \  \    \Psi_{i, i}(x_0, c_i)\not=0, \ \ \ \     i=1,\cdots,d,$$
where 
$$c_i=\frac{\zeta_i\circ\jmath(y_0)}{\zeta_0\circ\jmath(y_0)}, \ \ \ \ i=1,\cdots,d.$$
Then, there exists a unique regular mapping  element $(x_0, H)$ of $F$  such that  $y_0=H(x_0).$ Moreover, $\zeta_0\circ\jmath\circ H$ has no zeros. 
\end{theorem}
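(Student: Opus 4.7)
The idea is to apply the holomorphic implicit function theorem to each equation $\Psi_i(x, W_i) = 0$ separately, assemble the resulting local solutions into a holomorphic map to $\PP^d(\CC)$, and then recover $H$ via the Kodaira embedding $\jmath$.

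For each $i = 1, \ldots, d$, the coefficients $A_{i0}, \ldots, A_{i\nu_i}$ of $\Psi_i$ are holomorphic in a neighborhood of $x_0$, so $\Psi_i$ is holomorphic in a neighborhood of $(x_0, c_i) \in M \times \CC$. The assumptions $\Psi_i(x_0, c_i) = 0$ and $\Psi_{i,i}(x_0, c_i) \neq 0$ are precisely the hypotheses of the holomorphic implicit function theorem, which supplies a unique holomorphic function $\tilde W_i$ on some open neighborhood $U_i$ of $x_0$ satisfying $\tilde W_i(x_0) = c_i$ and $\Psi_i(x, \tilde W_i(x)) \equiv 0$ on $U_i$.

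Setting $U = \bigcap_{i=1}^d U_i$, I would define
$$\tilde H = [\,1 : \tilde W_1 : \cdots : \tilde W_d\,] : \ U \longrightarrow \PP^d(\CC),$$
which is holomorphic since the first homogeneous coordinate is the non-vanishing constant $1$, and which sends $x_0$ to $\jmath(y_0)$ by the choice of $c_i$. The crucial step---and the one I expect to be the main obstacle---is to verify $\tilde H(U) \subset \jmath(N)$. Since $F$ is an algebroid mapping \emph{into $N$}, over the regular locus of the joint branch divisor of the $\Psi_i$, each of the $\nu = \nu_1 \cdots \nu_d$ consistent combinations of roots $(W_1, \ldots, W_d)$ of the system determines a point of $\jmath(N)$ in the affine chart $\{\zeta_0 \neq 0\}$. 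By the uniqueness clause of the implicit function theorem, the holomorphic tuple $(\tilde W_1, \ldots, \tilde W_d)$ must coincide on a dense open subset of $U$ with one of these $\nu$ sheets of $\jmath \circ F$; hence $\tilde H$ factors through $\jmath(N)$ on that dense set, and by continuity together with closedness of $\jmath(N)$ in $\PP^d(\CC)$, on all of $U$. Then $H := \jmath^{-1} \circ \tilde H$ is a holomorphic map from $U$ into $N$ with $H(x_0) = y_0$.

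That $(x_0, H)$ is a mapping element of $F$ is immediate from $\Psi_i(x, \tilde W_i(x)) \equiv 0$. For uniqueness, any other regular mapping element $(x_0, H')$ of $F$ with $H'(x_0) = y_0$ produces holomorphic functions $W_i' = (\zeta_i \circ \jmath \circ H')/(\zeta_0 \circ \jmath \circ H')$ near $x_0$, well-defined since $\zeta_0 \circ \jmath(y_0) \neq 0$, which satisfy $\Psi_i(x, W_i') = 0$ and $W_i'(x_0) = c_i$; the uniqueness clause of the implicit function theorem then forces $W_i' \equiv \tilde W_i$, whence $H' = H$. The final assertion that $\zeta_0 \circ \jmath \circ H$ has no zeros is automatic from the chosen homogeneous representation, in which that component is the constant $1$.
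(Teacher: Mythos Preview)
Your proof is correct and reaches the same conclusion, but the route differs from the paper's in an instructive way. The paper does not invoke the holomorphic implicit function theorem as a black box; instead it re-derives it by hand: each $\Psi_i$ is rewritten as $(W_i-c_i)P_i(x,W_i)+B_{i0}(x)$, Rouch\'e's theorem is used to produce a unique continuous root $u_i(x)$ near $c_i$, and then holomorphicity of $u_i$ is checked by an explicit difference-quotient computation in each coordinate direction together with Hartogs' theorem. Your appeal to the implicit function theorem compresses all of this into one line, which is certainly cleaner. Conversely, the paper's approach is self-contained and makes the local uniqueness of the root quantitatively visible, which feeds into the later discussion of branch points. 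On the point of showing that $\tilde H(U)\subset\jmath(N)$, you are actually more explicit than the paper, which simply asserts that because $\jmath$ is a holomorphic embedding ``it is easy to verify'' that $H$ lands in $N$; your density-plus-closedness argument, using that the $\nu=\nu_1\cdots\nu_d$ root tuples are exactly the sheets of $\jmath\circ F$ by Definition~\ref{def3}, supplies the missing justification.
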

\begin{proof}  
Write 
  \begin{eqnarray*}
&&\Psi_i(x, W_i) \\ 
&=& B_{i\nu_i}(x)(W_i-c_i)^{\nu_i}+B_{i(\nu_i-1)}(x)(W_i-c_i)^{\nu_i-1}+\cdots+B_{i0}(x) \\
&=& (W_i-c_i)P_i(x,W_i)+B_{i0}(x),
  \end{eqnarray*}
where
  \begin{eqnarray*}
&& P_i(x,W_i)\\ 
&=&B_{i\nu_i}(x)(W_i-c_i)^{\nu_i-1}+B_{i(\nu_i-1)}(x)(W_i-c_i)^{\nu_i-2}+\cdots+B_{i1}(x).
  \end{eqnarray*}
By conditions, we have     
  \begin{eqnarray*}
B_{i0}(x_0) &=& \Psi_i(x_0, c_i)=0, \\
P_i(x_0, c_i)&=& \Psi_{i,i}(x_0, c_i):=b_i\not=0.
  \end{eqnarray*}
Using the continuation, there exist $r, \rho>0$ such that 
$$|B_{i0}(x)|<\frac{\rho|b_i|}{3}, \ \ \ \ |P_i(x, W_i)|>\frac{2|b_i|}{3}, \ \ \ \   i=1,\cdots,d$$
for all $x, W_i$ satisfying ${\rm{dist}}(x_0, x)\leq r$ and $|W_i-c_i|\leq\rho.$  In further, restrict  $W_i$ to $|W_i-c_i|=\rho,$  we have for $i=1,\cdots,d$
$$\left|(W_i-c_i)P_i(x,W_i)\right|>\frac{2\rho|b_i|}{3}>\frac{\rho|b_i|}{3}>|B_{i0}(x)|.$$
   According to  Rouch\'e's theorem,   for every   $x$ with   ${\rm{dist}}(x_0, x)\leq r,$ we deduce  that $\Psi_i(x,W_i)=(W_i-c_i)P_i(x,W_i)+B_{i0}$ as well as that $(W_i-c_i)P_i(x,W_i)$  have the same number  of  zeros in the disc $\{|W_i-c_i|<\rho\}.$ In fact,  the number of zeros  is 1,  since    $(W_i-c_i)P_i(x,W_i)$ in $W_i$ has only one zero $W_i=c_i$  in the disc  $\{|W_i-c_i|<\rho\}$ with 
    ${\rm{dist}}(x_0, x)\leq r.$ 
 This implies that  for  $i=1,\cdots,d,$   $\Psi_i(x, W_i)=0$ uniquely determines   a solution  $W_i=u_i(x)$
  in a small  geodesic  ball neighborhood $B(x_0, r_0)$  of $x_0$ with radius $r_0$  such that 
$$\Psi_i(x, u_i)=0, \ \ \  \  c_i=u_i(x_0).$$
Clearly,  every   $u_i$ is continuous on $B(x_0, r_0).$  Again, since $\jmath$ is a holomorphic embedding,   it is easy to verify   that there exists a unique 
 continuous mapping 
$H: B(x_0, r_0)\to N$  such that   
 $\jmath\circ H=[1: u_1:\cdots:u_d]$ with $y_0=H(x_0)$ and $u_i=\zeta_i\circ\jmath\circ H/\zeta_0\circ\jmath\circ H$ for $i=1,\cdots,d.$
  Since  $u_1,\cdots, u_d$ are continuous,  it implies that $\zeta_0\circ\jmath\circ H$ has no zeros. 

Next,  we show that $H$ is holomorphic on $B(x_0, r_0).$
 One just needs to show without loss of generality that $H$ is analytic at $x_0.$
Take  a  local holomorphic coordinate $z=(z_1,\cdots, z_m)$ near  $x_0$ such that   $z(x_0)=\textbf{0}.$ Thanks to  Hartog's theorem, it is sufficient  to show that $u_i$ is holomorphic in every holomorphic direction $z_k$ at $\textbf{0}$ for $i=1,\cdots,d.$   Put  
  \begin{eqnarray*}
u_{i,k}(z_k)&=&u_i(0,\cdots,z_k,\cdots,0), \\
B_{ij,k}(z_k)&=&B_{ij}(0,\cdots, 0, z_k, 0, \cdots,0),
  \end{eqnarray*}
  where  $i=1,\cdots, d,$ $j=0,\cdots, \nu_i,$   and $k=1,\cdots, m.$
Note that 
  \begin{eqnarray*}
&& \frac{u_{i,k}-c_i}{z_k} \\
&=&-\frac{B_{i0, k}}{z_k}\frac{1}{B_{i\nu_i,k}(u_{i,k}-c_j)^{\nu_i-1}+B_{i(\nu_i-1),k}(u_{i,k}-c_i)^{\nu_i2}+\cdots+B_{i1,k}} \\
&\rightarrow& -\frac{1}{B_{i1}(\textbf 0)}\frac{\partial B_{i0}}{\partial z_k}(\textbf 0)
  \end{eqnarray*}
as $z_k\rightarrow0.$ That is, 
$$\frac{\partial u_{i,k}}{\partial z_k}(\textbf 0)=-\frac{1}{B_{i1}(\textbf 0)}\frac{\partial B_{i0}}{\partial z_k}(\textbf 0).$$
Thus, $u_i$ is holomorphic in the holomorphic direction $z_k$ at $\textbf{0}.$ This completes the proof. 
\end{proof}

  \begin{defi}\label{def12}
Let $x_0$ be an arbitrary point in $M.$  We call $x_0$  a critical point of $W_i,$  if either   $A_{i\nu_i}(x_0)=0$ or $\Psi_i(x_0, W_i)$ has a multiple root  as a polynomial in $W_i.$ 
We use  $\mathscr S_{W_i}$ to denote  the set of all critical points of $W_i,$  called the critical set of $W_i.$   
A regular point of $W_i$  means  a point in $\mathscr T_{W_i}:=M\setminus \mathscr S_{W_i},$ 
where $\mathscr T_{W_i}$ is called  the regular set of $W_i.$  
  Denote by $\mathscr M_{W_i}$  the set of all points $x_0$ in $\mathscr S_{W_i}$ 
   such that $\Psi_i(x_0, W_i)$  has a multiple root  as a polynomial in $W_i,$  where $\mathscr M_{W_i}$ is called the multiple set of $W_i$ 
  and a point in $\mathscr M_{W_i}$ is called a multiple point of $W_i.$  
   \end{defi}

For   $x_0\in \mathscr T_{M_i},$  the fundamental theorem of algebra says  that  the  algebraic equation 
$$A_{i\nu_i}(x_0)W^{\nu_i}_i+A_{i(\nu_i-1)}(x_0)W_i^{\nu_i-1}+\cdots+A_{i0}(x_0)=0$$
has exactly $\nu_i$ distinct complex roots. Hence, Theorem \ref{thm1} yields  that 

\begin{cor}\label{cor1} For each   $x\in M\setminus\cup_{i=1}^d\mathscr S_{W_i},$  there exist  exactly $\nu$ distinct regular mapping elements $(x, H_1), \cdots, (x, H_\nu)$ 
of $F.$ Moreover, each $\zeta_0\circ\jmath\circ H_j$ has no zeros. 
\end{cor}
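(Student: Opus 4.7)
The plan is to apply Theorem \ref{thm1} to every admissible configuration of roots and then count combinatorially.

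First, I would fix $x\in M\setminus\cup_{i=1}^d\mathscr S_{W_i}$ and examine the polynomial equations $\Psi_i(x,W_i)=0$ one component at a time. Since $x\notin \mathscr S_{W_i}$, Definition \ref{def12} guarantees both that $A_{i\nu_i}(x)\neq 0$, so that $\Psi_i(x,W_i)$ has full degree $\nu_i$ as a polynomial in $W_i$, and that it has no multiple roots. The fundamental theorem of algebra then produces exactly $\nu_i$ distinct complex roots $c_{i,1},\ldots,c_{i,\nu_i}$, and at each such root the partial derivative $\Psi_{i,i}(x,c_{i,k})$ is nonzero, since a vanishing derivative would force $c_{i,k}$ to be a multiple root.

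Next, for every tuple $(k_1,\ldots,k_d)\in\{1,\ldots,\nu_1\}\times\cdots\times\{1,\ldots,\nu_d\}$, I set $c_i=c_{i,k_i}$ and let $y_{(k_1,\ldots,k_d)}\in N$ be the unique point whose image under $\jmath$ has affine coordinates $(c_1,\ldots,c_d)$; that such a point indeed lies in $\jmath(N)$ is built into the fact that $F$ is an algebroid mapping into $N$, together with Definition \ref{def3}, which asserts that $W_1,\ldots,W_d$ are chosen independently in their coordinate positions. The hypotheses of Theorem \ref{thm1} are now satisfied at each pair $(x,y_{(k_1,\ldots,k_d)})$, and Theorem \ref{thm1} accordingly yields a unique regular mapping element $(x,H_{(k_1,\ldots,k_d)})$ of $F$ with $H_{(k_1,\ldots,k_d)}(x)=y_{(k_1,\ldots,k_d)}$, and moreover $\zeta_0\circ\jmath\circ H_{(k_1,\ldots,k_d)}$ vanishes nowhere on its domain.

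Finally, I would finish by counting. Distinct tuples $(k_1,\ldots,k_d)$ produce distinct values $y_{(k_1,\ldots,k_d)}$ at $x$, so they correspond to distinct mapping elements; conversely, any regular mapping element $(x,H)$ of $F$ at $x$ must satisfy that each $W_i\circ\jmath\circ H$ solves $\Psi_i(x,W_i)=0$ near $x$, forcing $H(x)$ to be one of the $y_{(k_1,\ldots,k_d)}$. Thus the total count of regular mapping elements at $x$ is exactly $\nu_1\cdots\nu_d=\nu$, and the corollary follows. The only real obstacle is the mild bookkeeping step of ensuring that every combinatorial tuple of roots is actually realized by a branch of $F$; this is not a deep issue but rather an unpacking of how $F$, as a $\nu$-valued algebroid mapping with $\nu=\nu_1\cdots\nu_d$, is assembled from its component algebroid functions $W_1,\ldots,W_d$ via Definition \ref{def3}.
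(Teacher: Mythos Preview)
Your proposal is correct and follows essentially the same approach as the paper: the paper simply notes that the fundamental theorem of algebra gives $\nu_i$ distinct roots of $\Psi_i(x_0,W_i)=0$ for each $i$, and then invokes Theorem \ref{thm1}. Your version spells out the same argument in more detail, including the combinatorial count $\nu_1\cdots\nu_d=\nu$ and the observation that no-multiple-roots ensures $\Psi_{i,i}(x,c_{i,k})\neq 0$.
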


To find  the common roots of $\Psi_i(x,W_i), \Psi_{i,i}(x,W_i),$ we treat  the resultant  $R_{\Psi_i}$ of $\Psi_i(x,W_i), \Psi_{i,i}(x,W_i),$ i.e., 
$$R_{\Psi_i}=\left | \begin{matrix}
A_{i\nu_i} &A_{i(\nu_i-1)}   & \cdots & A_{i2} & A_{i1} &A_{i0}&0&0&\cdots&0 \\
0 &A_{i\nu_i}   & \cdots & A_{i3} & A_{i2} &A_{i1}&A_{i0}&0&\cdots&0 \\
\vdots&\vdots& & \vdots&\vdots &\vdots&  \vdots &\vdots & &\vdots \\
0 & 0   & \cdots & A_{i\nu_i} & A_{i(\nu_i-1)} &A_{i(\nu_i-2)}&A_{i(\nu_i-3)}&A_{i(\nu_i-4)}&\cdots&A_{i0}  \\
B_{i\nu_i} &B_{i(\nu_i-1)}   & \cdots & B_{i2} & B_{i1} &0&0&0&\cdots&0 \\
0 &B_{i\nu_i}   & \cdots & B_{i3} & B_{i2} &B_{i1}&0&0&\cdots&0 \\
\vdots&\vdots& & \vdots&\vdots &\vdots&  \vdots &\vdots & &\vdots \\
0 & 0   & \cdots & 0 & B_{i\nu_i} &B_{i(\nu_i-1)}&B_{i(\nu_i-2)}&B_{i(\nu_i-3)}&\cdots&B_{i1}  \\
\end{matrix} \right |,$$
where $B_{ij}=iA_{ij}$ for $i=1,\cdots,\nu_i.$
Applying   Sylvester's elimination method,   we  see that $R_{\Psi_i}(x)=0$ if and only 
if   $A_{i\nu_i}(x)=0$ or  
 $\Psi_i(x, W_i)$ has a   multiple  root  as a polynomial in $W_i,$ i.e., $x\in\mathscr S_{W_i}.$ 
Therefore,   $\mathscr S_{W_i}$ is an analytic set of complex codimension $1.$
 Define the  \emph{discriminant}  of $\Psi_i(x, W_i)$ by 
$$J_{\Psi_i}(x)=(-1)^{\frac{\nu_i(\nu_i-1)}{2}}\frac{R_{\Psi_i}(x)}{A_{i\nu_i}(x)}\not\equiv0, \ \ \ \  i=1,\cdots,d.$$
Write 
 $$\Psi_i(x, W_i)=A_{i\nu_i}(x)(W_i-w_{i1}(x))\cdots(W_i-w_{i\nu_i}(x)).$$
A direct computation leads to  
$$J_{\Psi_i}(x)=A_{i\nu_i}(x)^{2\nu_i-2}\prod_{1\le j<k\le\nu_i}\big(w_{ij}(x)-w_{ik}(x)\big)^2.$$
\ \ \ \    If $x_0\in \mathscr S_{W_i}\setminus A^{-1}_{i\nu_i}(0),$  then $\Psi_i(x_0, W_i)$ has a multiple root  as a polynomial in $W_i.$  
If $x_0\in A^{-1}_{i\nu_i}(0)\setminus I_{\mathscr A_F},$ then we  set $V_i=1/W_i$ and rewrite (\ref{eq}) as 
$$\Psi_i(x,W_i)=V^{-\nu_i}_i\Phi_i(x,V_i),$$
where 
$$\Phi_i(x,V_i)=A_{i\nu_i}(x)+A_{i(\nu_i-1)}(x)V_i+\cdots+A_{i0}(x)V^{\nu_i}_i.$$
Since   $\Phi_i(x,0)=A_{i\nu_i}(x),$  we can deduce  that $x_0$ is  a zero of $V_i,$ i.e., a pole of $W_i.$  
In fact, it   yields    from Theorem \ref{pole} (see Section \ref{sec2}) that  $J_{\Psi_i}(x_0)=0$ if and only if $x_0\in\mathscr M_{W_i}.$
Thus,  
     Corollary \ref{cor1}  can   extend to  $M\setminus \cup_{i=1}^d\mathscr M_{W_i}.$  
\begin{cor}\label{cor2} For each   $x\in M\setminus \cup_{i=1}^d\mathscr M_{W_i},$  there exist  exactly $\nu$ distinct quasi-regular  mapping elements $(x, H_1), \cdots, (x, H_\nu)$ 
of $F.$ 
\end{cor}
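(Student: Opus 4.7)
The plan is to enlarge the domain of validity of Corollary \ref{cor1} from $M\setminus\cup_i\mathscr{S}_{W_i}$ to the larger set $M\setminus\cup_i\mathscr{M}_{W_i}$ by handling each coordinate $W_i$ separately. Fix $x\in M\setminus\cup_i\mathscr{M}_{W_i}$ and an index $i$. Either $x\notin\mathscr{S}_{W_i}$, in which case Theorem \ref{thm1} produces $\nu_i$ distinct regular branches of $W_i$ near $x$, or $x\in\mathscr{S}_{W_i}\setminus\mathscr{M}_{W_i}$. In this second situation the analysis in the paragraph immediately preceding the corollary shows that $A_{i\nu_i}(x)=0$, so one or more branches of $W_i$ blow up at $x$, while at the same time $J_{\Psi_i}(x)\neq 0$, so there are no multiple roots (finite or at infinity) to contend with.

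For the polar case I would follow the recipe already sketched in the excerpt: pass from $W_i$ to $V_i=1/W_i$, so that $\Psi_i(x,W_i)=0$ is equivalent to $\Phi_i(x,V_i)=0$ with
\[
\Phi_i(x,V_i)=A_{i\nu_i}(x)+A_{i(\nu_i-1)}(x)V_i+\cdots+A_{i0}(x)V_i^{\nu_i}.
\]
At the point $x$, the polynomial $\Phi_i(x,\cdot)$ has $V_i=0$ as a root of multiplicity equal to the number of polar branches, which by the discriminant identification $J_{\Psi_i}(x)=0\iff x\in\mathscr{M}_{W_i}$ must be simple, and all the remaining roots of $\Phi_i(x,\cdot)$ are simple for the same reason. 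Apply Theorem \ref{thm1} to $\Phi_i$ at each of its $\nu_i$ roots to obtain $\nu_i$ local holomorphic solutions $V_i=v_{ik}(x')$ on a common neighborhood of $x$; inverting, $W_i=1/v_{ik}$ gives $\nu_i$ meromorphic branches of $W_i$ solving $\Psi_i(x,W_i)=0$, exactly one of which is polar at $x$ (the branch coming from the root $V_i=0$).

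Having produced $\nu_i$ distinct meromorphic branches of each $W_i$ near $x$, I combine them in all $\nu_1\cdots\nu_d=\nu$ possible tuples $(W_1,\ldots,W_d)$ and use the Kodaira embedding $\jmath$ to lift each tuple to a meromorphic local mapping $H_k$ into $N$. The resulting pairs $(x,H_k)$ are quasi-regular mapping elements of $F$ by construction, and they are pairwise distinct because in each coordinate slot the $\nu_i$ chosen branches already take distinct values (in $\mathbb{P}^1(\mathbb{C})$) at $x$. The main obstacle is the justification of the claim that the root $V_i=0$ of $\Phi_i(x,\cdot)$ is simple precisely when $x\notin\mathscr{M}_{W_i}$; this is where the equivalence $J_{\Psi_i}(x)=0\iff x\in\mathscr{M}_{W_i}$ is used in full force, and the equivalence itself relies on Theorem \ref{pole} of Section \ref{sec2}, which accounts uniformly for finite multiple roots and for coincidences at infinity after the $W_i\leftrightarrow V_i$ swap.
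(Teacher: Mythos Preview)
Your proposal is correct and follows essentially the same route as the paper: the paper's argument is the short paragraph immediately preceding the corollary, which likewise passes from $W_i$ to $V_i=1/W_i$ via $\Phi_i$ at points where $A_{i\nu_i}$ vanishes, invokes the equivalence $J_{\Psi_i}(x_0)=0\iff x_0\in\mathscr M_{W_i}$ (through Theorem~\ref{pole}), and thereby extends Corollary~\ref{cor1} from $M\setminus\cup_i\mathscr S_{W_i}$ to $M\setminus\cup_i\mathscr M_{W_i}$. Your write-up simply supplies more detail for the same mechanism.
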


Let $F_1,\cdots, F_\nu$ stand for  the $\nu$ distinct single-valued components of $F.$  
We call $x_0\in M$  a multiple point of  $F,$  if $F_i(x_0)=F_j(x_0)$ for some  $i\not=j.$  Denote by $\mathscr M_{F}$  the set of all multiple points of 
$F,$ which is called the multiple set of $F.$

\begin{lemma}\label{lem22q} We have 
$$\mathscr M_F=\cup_{i=1}^d\mathscr M_{W_i}.$$
\end{lemma}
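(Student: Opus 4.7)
The plan is to show both inclusions directly by exploiting the fact that the $\nu$ local branches of $F$ are enumerated by $d$-tuples of root-branches of the $\Psi_i$'s. Concretely, on the open set $M\setminus\cup_{i=1}^d\mathscr M_{W_i}$, Corollary \ref{cor2} furnishes $\nu$ distinct quasi-regular mapping elements $(x,H_1),\ldots,(x,H_\nu)$ of $F$, and these correspond bijectively to tuples $(j_1,\ldots,j_d)$ with $1\le j_i\le\nu_i$ via the root-branches $w_{i,j_i}$ of $\Psi_i(x,W_i)$, where $\nu=\nu_1\cdots\nu_d$. Thus the single-valued components $F_1,\ldots,F_\nu$ are indexed by such tuples.

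For the inclusion $\cup_{i=1}^d\mathscr M_{W_i}\subseteq\mathscr M_F$, I would take $x_0\in\mathscr M_{W_i}$. By definition, $\Psi_i(x_0,W_i)$ has a multiple root, so there exist $j\neq k$ such that the analytic continuations $w_{i,j}$ and $w_{i,k}$ are distinct branches of $W_i$ agreeing in value at $x_0$. Choose any fixed indices $j_{i'}$ for $i'\neq i$, and form the two tuples that differ only in the $i$-th slot (using $j$ versus $k$). The corresponding components $F_a$ and $F_b$ are distinct single-valued branches of $F$ because they differ as germs in the $i$-th coordinate, but by construction $F_a(x_0)=F_b(x_0)$, proving $x_0\in\mathscr M_F$.

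For the reverse inclusion $\mathscr M_F\subseteq\cup_{i=1}^d\mathscr M_{W_i}$, I would argue the contrapositive: assume $x_0\notin\cup_{i=1}^d\mathscr M_{W_i}$ and show all $\nu$ branches of $F$ take distinct values at $x_0$. Two components $F_a,F_b$ corresponding to distinct tuples $(j_1,\ldots,j_d)\neq(k_1,\ldots,k_d)$ must differ in some coordinate $i$, say $j_i\neq k_i$. Since $x_0\notin\mathscr M_{W_i}$, the polynomial $\Psi_i(x_0,W_i)$ has no multiple root (whether regarded in the affine chart where $A_{i\nu_i}(x_0)\neq 0$ or, when $A_{i\nu_i}(x_0)=0$, via the companion polynomial $\Phi_i(x,V_i)$ with $V_i=1/W_i$, in which only one branch attains the value $\infty$). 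Hence the root values $w_{i,j_i}(x_0)$ and $w_{i,k_i}(x_0)$ are distinct in $\mathbb P^1(\mathbb C)$, and this forces $F_a(x_0)\neq F_b(x_0)$ in $N\subset\mathbb P^d(\mathbb C)$.

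The main obstacle I anticipate is the bookkeeping at points where some leading coefficient $A_{i\nu_i}$ vanishes but $x_0$ is not a multiple point of $W_i$: one must verify that the quasi-regular elements genuinely separate at $x_0$ when read in an appropriate chart of $N$. This is handled by the substitution $V_i=1/W_i$ and the simple-root analysis of $\Phi_i(x_0,V_i)$ already carried out just before Corollary \ref{cor2}; once this is in place, the rest of the argument is a clean unpacking of definitions.
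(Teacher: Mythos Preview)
Your proposal is correct and follows essentially the same approach as the paper. Both arguments exploit the indexing of the single-valued components $F_1,\ldots,F_\nu$ by $d$-tuples of root-branches $(w_{1,j_1},\ldots,w_{d,j_d})$; for $\cup_i\mathscr M_{W_i}\subseteq\mathscr M_F$ both fix all slots but one to produce two coinciding branches, and for the reverse inclusion the paper argues directly (from $F_i(x_0)=F_j(x_0)$ the coordinate ratios agree, and since $F_i\neq F_j$ some coordinate ratio comes from distinct germs of a $W_k$), which is exactly the contrapositive you state. Your extra care with the substitution $V_i=1/W_i$ at zeros of $A_{i\nu_i}$ is already absorbed in the paper by the analysis preceding Corollary~\ref{cor2}, so it is not an additional obstacle.
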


\begin{proof}  If $x_0\in\mathscr M_{W_k}$ for some $k$ with $1\leq k\leq d,$  then  there  exist  two  distinct single-valued components $W_{ki}, W_{kj}$ of $W_k$ such that $W_{ki}(x_0)=W_{kj}(x_0).$ We may assume without loss of generality that $k=1.$
Let $W_{l1}$ be a single-valued component of $W_l$ for  $l=2,\cdots,\nu.$
Take two  single-valued components $F_i, F_j$ of $F$ such that 
$\jmath\circ F_p=[1: W_{1p}:W_{21}:\cdots:W_{\nu1}]$ with $p=i,j.$
Clearly, we have $\jmath\circ F_i(x_0)=\jmath\circ F_j(x_0).$ Since $\jmath$ is injective, we obtain $F_i(x_0)=F_j(x_0),$ which implies that $x_0\in\mathscr M_F.$
If $x_0\in \mathscr M_F,$ then we have $F_i(x_0)=F_j(x_0)$ for some two distinct single-valued components $F_i, F_j$ of $F.$ It yields  that 
$$\frac{\zeta_k\circ\jmath\circ F_i}{\zeta_0\circ\jmath\circ F_i}=\frac{\zeta_k\circ\jmath\circ F_j}{\zeta_0\circ\jmath\circ F_j}, \ \ \ \ k=1,\cdots,d.$$
This gives  that $x_0\in \cup_{i=1}^d\mathscr M_{W_i}.$ The proof is completed. 
\end{proof}

 By   Corollary \ref{cor2} and Lemma \ref{lem22q}, we have:  

\begin{cor}\label{corrr2} For each   $x\in M\setminus \mathscr M_F,$  there exist  exactly $\nu$ distinct quasi-regular  mapping elements $(x, H_1), \cdots, (x, H_\nu)$ 
of $F.$ 
\end{cor}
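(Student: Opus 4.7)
The plan is to deduce Corollary \ref{corrr2} as an immediate consequence of the two preceding results, namely Corollary \ref{cor2} and Lemma \ref{lem22q}. There is essentially nothing further to prove: the content is a rewriting of the domain on which quasi-regular mapping elements are guaranteed, exchanging the set $\cup_{i=1}^d \mathscr M_{W_i}$ for its equivalent description $\mathscr M_F$.

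Concretely, I would proceed as follows. First, I would invoke Lemma \ref{lem22q}, which asserts the set-theoretic equality
\[
\mathscr M_F \;=\; \bigcup_{i=1}^d \mathscr M_{W_i},
\]
so that
\[
M \setminus \mathscr M_F \;=\; M \setminus \bigcup_{i=1}^d \mathscr M_{W_i}.
\]
Next, I would apply Corollary \ref{cor2}: for every point $x$ in this latter set, there exist exactly $\nu$ distinct quasi-regular mapping elements $(x, H_1), \ldots, (x, H_\nu)$ of $F$. Substituting the equality above yields the stated conclusion for every $x \in M \setminus \mathscr M_F$.

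There is no genuine obstacle here, since the nontrivial content, namely the construction of the $\nu$ quasi-regular mapping elements via Rouch\'e's theorem and Sylvester's elimination applied to each defining equation $\Psi_i(x, W_i) = 0$, is already packaged inside Corollary \ref{cor2}, while the identification of the exceptional set as $\mathscr M_F$ is precisely the content of Lemma \ref{lem22q}. The only thing to be careful about is making clear that ``exactly $\nu$'' accounts for the product $\nu = \nu_1 \cdots \nu_d$ of the degrees of the defining equations, which follows from the independence of the choices of single-valued components of $W_1, \ldots, W_d$ in distinct coordinate positions, as emphasized in Definition \ref{def3}. Thus the proof reduces to a one-line combination of the two cited results.
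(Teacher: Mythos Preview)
Your proposal is correct and matches the paper's approach exactly: the paper simply states ``By Corollary \ref{cor2} and Lemma \ref{lem22q}, we have'' before stating the corollary, which is precisely the one-line combination you describe.
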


 \subsection{Analytic Continuation of Quasi-regular  Mapping  Elements}~\label{sec2}

We review  that a quasi-regular  mapping element of $F$  means  a pair $(x, H)$ such that  $\jmath\circ H$ is  a meromorphic  mapping in a neighborhood $U(x)$ of 
$x.$ 
For  convenience, we  write $$\tilde x=(x, H)=(U(x), H)$$   without  any confusion. 
Instead of $U(x),$      a  geodesic ball neighborhood $B(x, r)$ centered at $x$ with radius $r$ is usually considered.  

 Let $\tilde x_1=(U(x_1),  H_1),$ $\tilde x_2=(U(x_2), H_2)$ be any two  quasi-regular mapping  elements of $F.$
Define an equivalent relation as follows:  
  $\tilde x_1\sim \tilde x_2$   if and only if 
 $$x_1=x_2; \ \ \ \  H_1(x)= H_2(x), 
 \ \ \ \  ^\forall  x\in U(x_1)\cap U(x_2).$$    Set 
 $$\mathcal M^0_F=\big{\{}\text{all quasi-regular  mapping elements of $F$}\big{\}}/\sim.$$ 
  \ \ \ \        A quasi-regular  mapping  element  $\tilde y=(y, G)$ is said to be  
  a  \emph{direct analytic continuation} of  a quasi-regular  mapping  element $\tilde x=(U(x), H),$ if $y\in U(x)$ and $G= H$ in a   neighborhood of $y.$ 
  Using  the  analytic continuation, we   can   regard   that   
  $$\tilde y=(y, G)=(y, H).$$
\ \ \ \    A  neighborhood $V(\tilde x)$ of $\tilde x$ is  defined by a set (containing $\tilde x$) of all  $\tilde y\in \mathcal M^0$ such that   
 $\tilde y$  is a  direct analytic continuation  of $\tilde x$ satisfying  $y\in U_1(x),$ where $U_1(x)$  is a neighborhood of $x$ in $U(x).$
Namely, $V(\tilde x)=\{(y, H): y\in U_1(x)\}.$
In particular,  a $r$-\emph{neighborhood} $V(\tilde x, r)$ of $\tilde x$  (satisfying that $B(x,r)\subset U(x)$), 
is  defined by    
  the set (which contains $\tilde x$) of all  $\tilde y\in \mathcal M^0$ such that   $\tilde y$  is a  direct analytic continuation  of $\tilde x$ satisfying that $y\in B(x, r).$
  In other  words, it is defined  to be   $V(\tilde x, r)=\{(y, H): y\in B(x, r)\}.$
    A subset $\mathcal E\subseteq \mathcal M^0$ is called an open set, if either $\mathcal E$ is an empty set or every   point $\tilde x\in\mathcal E$ is an inner point, i.e., there exists  a $\epsilon$-neighborhood $V(\tilde x, \epsilon)$ of $\tilde x$ such that $V(\tilde x, \epsilon)\subset \mathcal E.$
  It is not hard  to  check that it defines  a topology of $\mathcal M^0.$

  \begin{theorem}\label{thm4} $\mathcal M^0_F$ is a  Hausdorff space. 
  \end{theorem}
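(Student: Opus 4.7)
The plan is to separate two distinct equivalence classes $\tilde x_1=(x_1,H_1)$ and $\tilde x_2=(x_2,H_2)$ in $\mathcal M^0_F$ by carefully shrunken $r$-neighborhoods, splitting into two cases according to whether the base points coincide.

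\textbf{Case 1: $x_1\neq x_2$.} Here the argument is immediate from the Hausdorff property of $M$. I would pick radii $r_1,r_2>0$ so small that $B(x_1,r_1)\subset U(x_1)$, $B(x_2,r_2)\subset U(x_2)$, and $B(x_1,r_1)\cap B(x_2,r_2)=\emptyset$. Then $V(\tilde x_1,r_1)$ and $V(\tilde x_2,r_2)$ project disjointly onto $M$ under the natural base-point map $(y,H)\mapsto y$, so they are disjoint open neighborhoods.

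\textbf{Case 2: $x_1=x_2=x$ but $\tilde x_1\neq\tilde x_2$.} By the equivalence relation defining $\mathcal M^0_F$, the local mappings $H_1$ and $H_2$ do not agree on any neighborhood of $x$. The goal is to exhibit an $r>0$ with $V(\tilde x_1,r)\cap V(\tilde x_2,r)=\emptyset$. I would argue by contradiction: if the intersection is nonempty for every $r=1/n$, then there exists a sequence $y_n\to x$ together with neighborhoods $W_n\ni y_n$ on which $H_1\equiv H_2$ (because a common point $\tilde y=(y,H_1)=(y,H_2)$ of the two $r$-neighborhoods requires $H_1=H_2$ on a neighborhood of $y$ by the definition of direct analytic continuation). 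Composing with the Kodaira embedding $\jmath:N\hookrightarrow\mathbb P^d(\mathbb C)$ and passing to the affine coordinates $W_i=\zeta_i\circ\jmath\circ H_\bullet/\zeta_0\circ\jmath\circ H_\bullet$, each $W_i$-component of $H_1$ and $H_2$ is meromorphic on a common connected neighborhood $U$ of $x$ and they coincide on the open set $\bigcup_n W_n$, which accumulates at $x$. The identity principle for meromorphic functions then forces $H_1=H_2$ on all of $U$, contradicting $\tilde x_1\neq\tilde x_2$.

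The only delicate ingredient is the application of the identity principle in Case 2; everything else is formal from the definition of the $r$-neighborhood basis and the Hausdorff property of the underlying manifold. To make the identity principle rigorous for meromorphic mappings into $N$, I would restrict to the Zariski-open locus where neither $H_1$ nor $H_2$ has indeterminacy, apply the classical identity theorem componentwise to the holomorphic functions $W_i^{(1)}-W_i^{(2)}$, and then extend across the pole/indeterminacy set by continuity of meromorphic mappings. Once this is established, combining the two cases gives the Hausdorff property of $\mathcal M^0_F$.
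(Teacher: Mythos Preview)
Your proof is correct and follows the same two-case split as the paper. In Case~2, however, you work harder than necessary: a \emph{single} nonempty intersection $V(\tilde x_1,r)\cap V(\tilde x_2,r)$ already produces a point $(y,G)$ with $H_1=G=H_2$ on an open neighborhood of $y$, and since the common connected domain is a ball the identity principle immediately forces $H_1\equiv H_2$---no sequence $r=1/n$ or accumulation at $x$ is needed. The paper dispatches this in one line by invoking the ``uniqueness theorem of analytic mappings.''
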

  \begin{proof} 
 Pick    $\tilde x_1\not=\tilde x_2\in \mathcal M^0_F.$ If $x_1\not=x_2,$ then we can consider   $\epsilon$-neighborhoods 
 $V(\tilde x_1, \epsilon)$ of $\tilde x_1$  and  $V(\tilde x_2, \epsilon)$ of $\tilde x_2$  with $0<\epsilon<{\rm{dist}}(x_1, x_2)/2.$
 It is clear that         
   $V(\tilde x_1, \epsilon)\cap V(\tilde x_2, \epsilon)=\emptyset.$
       If  $V(\tilde x_1, r_1)\cap V(\tilde x_2, r_2)\not=\emptyset$ for   $x_1=x_2,$ 
      then we can   take a quasi-regular mapping element 
           $(x_0, H_0)\in V(\tilde x_1, \epsilon)\cap V(\tilde x_2, \epsilon),$  
          which  gives     
 $H_1= H_0= H_2$ in some  smaller  neighborhood of $x_0.$ By the uniqueness theorem of analytic mappings, we obtain   $H_1\equiv H_2.$ However,  
   it contradicts with $\tilde x_1\not=\tilde x_2.$
  \end{proof}
 
 \begin{defi}
 Let $\tilde x,$ $\tilde y$ be two  quasi-regular  mapping  elements of $F.$  Let $\gamma: [0, 1]\to M$ be a curve with   $x=\gamma(0)$ and $y=\gamma(1).$ We say that $\tilde y$ is an analytic continuation of $\tilde x$ along $\gamma,$ if 
 
 $(i)$ for any $t\in [0,1],$ there is a  quasi-regular  mapping element $\tilde \gamma(t)$ of $F;$
 
 $(ii)$ for any $t_0\in [0,1]$ and any $\epsilon>0,$ there exists $\delta>0$ such that $|\gamma(t)-\gamma(t_0)|<\epsilon$
  for all $t\in[0,1]\cap\{|t-t_0|<\delta\},$ and that  $\tilde\gamma(t)$ is a direct analytic continuation of $\tilde\gamma(t_0).$
 \end{defi}

 \begin{remark}\label{rek2}  An analytic continuation of  elements in $\mathcal M^0_F$ along a curve $\gamma$ in $M$ is a continuous mapping of $\gamma$ into $\mathcal M^0_F,$ i.e., an analytic continuation of $\tilde x$ to $\tilde y$ along $\gamma$ is a curve  connecting $x$ and $y.$ Moreover, if $\tilde y$ is an analytic continuation of $\tilde x$ along a curve,  then $\tilde x$ is also an analytic continuation
  of $\tilde y$ along the same curve. Thus, the  analytic continuation  is symmetric. 
\end{remark}

\begin{theorem}[Analytic continuation]\label{thm2}
Let  $\gamma: [0, 1]\to M\setminus \mathscr M_F$ be a curve. 
Then, any mapping element $\tilde \gamma(0)\in \mathcal M^0_F$ can  continue analytically to $\gamma(1)$ along $\gamma$ and attain  to   a unique  mapping element $\tilde\gamma(1)\in \mathcal M^0_F.$ 
\end{theorem}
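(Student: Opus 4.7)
My plan is to run a standard connectedness argument on $[0,1]$. Let $T \subset [0,1]$ be the set of parameters $s$ for which $\tilde\gamma(0)$ admits an analytic continuation along $\gamma|_{[0,s]}$ to some quasi-regular mapping element at $\gamma(s)$. Clearly $0 \in T$, and I will show that $T$ is both open and closed in $[0,1]$; the connectedness of $[0,1]$ then forces $T = [0,1]$, and in particular $1 \in T$.

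Openness is immediate from the definition of direct analytic continuation. If $s \in T$ is realized by a quasi-regular mapping element $(U(\gamma(s)), H)$ with $H$ meromorphic on $U(\gamma(s))$, the continuity of $\gamma$ produces $\delta > 0$ such that $\gamma(t) \in U(\gamma(s))$ for all $t \in [0,1]$ with $|t - s| < \delta$; for each such $t$, the pair $(\gamma(t), H)$ is a direct analytic continuation of $\tilde\gamma(s)$, so the whole subinterval lies in $T$.

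Closedness is the technical core and the unique place where the hypothesis $\gamma([0,1]) \subset M \setminus \mathscr M_F$ does real work. Let $s_0 \in [0,1]$ be a limit point of $T$. Since $\gamma(s_0) \notin \mathscr M_F$, Corollary \ref{corrr2} supplies exactly $\nu$ distinct quasi-regular mapping elements $(\gamma(s_0), H_1), \ldots, (\gamma(s_0), H_\nu)$ of $F$, and after shrinking I may assume all $H_j$ are defined meromorphically on a common geodesic ball $B(\gamma(s_0), r)$. Continuity of $\gamma$ produces $\delta > 0$ with $\gamma(t) \in B(\gamma(s_0), r)$ for $|t - s_0| < \delta$; I pick some $s \in T$ lying in this window. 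Then $\tilde\gamma(s)$ is a quasi-regular mapping element at the regular point $\gamma(s) \in B(\gamma(s_0), r)$, so its germ must coincide with the germ of some $H_{j(s)}$ at $\gamma(s)$. That $H_{j(s)}$, read off at $\gamma(s_0)$, furnishes $\tilde\gamma(s_0)$ and realizes the continuation along the stretch of $\gamma$ joining $\gamma(s)$ to $\gamma(s_0)$, so $s_0 \in T$.

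Uniqueness of $\tilde\gamma(1)$ then follows from the Hausdorff property of $\mathcal M_F^0$ established in Theorem \ref{thm4}: any two continuations $\tilde\gamma^{(1)}, \tilde\gamma^{(2)}$ along $\gamma$ with $\tilde\gamma^{(1)}(0) = \tilde\gamma^{(2)}(0)$ determine a subset of $[0,1]$ on which they agree, which is open by direct analytic continuation and closed by the Hausdorff separation, and contains $0$, hence equals $[0,1]$. The principal obstacle is the closedness step: at a multiple point of $F$ the distinct single-valued branches collide, so the germ $\tilde\gamma(s)$ for $s$ just short of $s_0$ could fail to single out a unique meromorphic extension across $s_0$. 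Excluding $\mathscr M_F$ along $\gamma$ is exactly the hypothesis that lets Corollary \ref{corrr2} act as a clean pigeonhole on the $\nu$ available branches and rules this pathology out.
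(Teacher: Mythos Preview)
Your proof is correct and follows essentially the same approach as the paper's. The paper phrases the argument via the supremum $\tau=\sup\{t:\text{continuation exists to }\gamma(t)\}$ and derives a contradiction from $\tau<1$, whereas you run the equivalent open/closed argument on $[0,1]$; in both versions the decisive step is invoking Corollary~\ref{corrr2} at the critical parameter to obtain the $\nu$ local branches and match the incoming germ to one of them, and uniqueness in both cases rests on the identity principle (which underlies the Hausdorff property you cite).
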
 
\begin{proof}
Set
$$\tau=\sup\big{\{}t\in[0,1]:  \ \text{$\tilde\gamma(0)$  can continue  analytically to a $(\gamma(t), H_t)\in\mathcal M^0_F$}\big{\}}.$$
It is clear that  $0<\tau\leq1.$
It is sufficient  to confirm   that  $\tau=1.$ Otherwise, we may assume that $\tau<1.$ Using Corollary \ref{corrr2},   there exist  exactly $\nu$  mapping elements  of $F$
at  $\gamma(\tau),$ saying 
$$\left(B(\gamma(\tau), r_\tau), G_{1}\right), \cdots, \left(B(\gamma(\tau), r_\tau), G_{\nu}\right)\in \mathcal M^0_F.$$
\ \ \   Take $\delta>0$ small enough such that $\{\gamma(t): t\in[\tau-\delta,\tau+\delta]\}\subset B(\gamma(\tau), r_\tau)$ with
 $\delta<\min\{\tau,1-\tau\}.$
Then, there is  a unique integer $k_0\in\{1,\cdots, \nu\}$  such that $H_{\tau-\delta}=G_{k_0}$ 
in a neighborhood of  $\gamma(\tau-\delta).$
Notice that  $G_{k_0}$ is  defined on $B(\gamma(\tau),r_\tau),$  then 
 for any $t\in[\tau-\delta,\tau+\delta],$ one can redefine $(\gamma(t), G_{k_0})\in \mathcal M^0_F.$
   Combined with   
the definition of $\{\gamma(t), H_t\}_{0\leq t\leq\tau-\delta},$ it gives   immediately  
 that $\tilde\gamma(0)$ can   continue analytically  to $\gamma(\tau+\delta)$  along $\gamma,$
 i.e.,  $(\gamma(\tau+\delta), G_{k_0})\in \mathcal M^0_F$ is an analytic continuation of $\tilde\gamma(0)$ along $\gamma.$ However, 
  it contradicts  with the definition  of $\tau.$  Hence, we obtain  $\tau=1.$ The uniqueness of   $\tilde \gamma(1)$ is immediate due to the uniqueness theorem of analytic mappings.  
  \end{proof}
    
\begin{cor}\label{cor3}
Let  $\tilde X=\{(x, H_j)\}_{j=1}^\nu, \tilde Y=\{(y, G_j)\}_{j=1}^\nu\subset\mathcal M^0_F$ be two groups of  distinct  mapping elements  at $x, y\in M\setminus \mathscr M_F,$ respectively.   Then,
 $\tilde X, \tilde Y$ can continue analytically to each other along any curve 
 in $M\setminus \mathscr M_F.$
\end{cor}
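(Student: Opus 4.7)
The plan is to treat the $\nu$ mapping elements in $\tilde X$ simultaneously, continue each individually along $\gamma$ via the previous theorem, and then argue that the resulting $\nu$ continuations at $y$ must exhaust $\tilde Y$.

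First, since $\gamma$ lies in $M\setminus\mathscr{M}_F$, Theorem \ref{thm2} applies to each element $(x, H_j)\in \tilde X$ and produces a unique analytic continuation $(y, \widetilde H_j)\in \mathcal M_F^0$ along $\gamma$. This gives a collection $\{(y,\widetilde H_j)\}_{j=1}^\nu\subset \mathcal M_F^0$ of quasi-regular mapping elements of $F$ at $y$. By Corollary \ref{corrr2}, there are exactly $\nu$ distinct such elements at $y$, namely $\tilde Y=\{(y,G_j)\}_{j=1}^\nu$. Hence, once we show that $\widetilde H_1,\dots,\widetilde H_\nu$ are pairwise distinct, a counting argument forces $\{(y,\widetilde H_j)\}_{j=1}^\nu = \tilde Y$ as sets, which is the desired analytic continuation of $\tilde X$ to $\tilde Y$ along $\gamma$.

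The step I expect to be the main obstacle is the distinctness of the $\widetilde H_j$. The idea is to argue by contradiction using the symmetry of analytic continuation recorded in Remark \ref{rek2}: if $\widetilde H_i$ and $\widetilde H_j$ agreed in a neighborhood of $y$ for some $i\neq j$, then the pair $(y,\widetilde H_i)=(y,\widetilde H_j)$ would represent a single element of $\mathcal M_F^0$. Reversing the parametrization of $\gamma$ yields a curve $\gamma^{-1}$ in $M\setminus \mathscr M_F$ from $y$ to $x$, and by Remark \ref{rek2} both $(x,H_i)$ and $(x,H_j)$ would be analytic continuations of this common element along $\gamma^{-1}$. The uniqueness assertion in Theorem \ref{thm2} then forces $(x,H_i)=(x,H_j)$, contradicting the distinctness of the elements in $\tilde X$. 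Therefore the $\widetilde H_j$ are pairwise distinct, as required.

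Finally, the converse direction (that $\tilde Y$ continues back to $\tilde X$) follows immediately by reversing $\gamma$ and applying the symmetry of analytic continuation (Remark \ref{rek2}) to each individual pair. Combining both directions yields the stated mutual continuation. The proof is therefore a straightforward assembly of Theorem \ref{thm2}, Corollary \ref{corrr2}, and Remark \ref{rek2}; no new analytic input is needed beyond the bookkeeping that reduces the multi-valued statement to $\nu$ single-valued continuations.
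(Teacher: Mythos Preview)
Your argument is correct and is exactly the natural elaboration the paper has in mind: the paper states this as an immediate corollary of Theorem~\ref{thm2} without giving a proof, and your use of Theorem~\ref{thm2} for existence and uniqueness, Corollary~\ref{corrr2} for the count of elements at $y$, and Remark~\ref{rek2} for symmetry is precisely how one fills in the details.
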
 
\begin{theorem}[Connectivity]\label{thm3} Any two quasi-regular  mapping elements  in $\mathcal M^0_F$ can continue analytically to each other along a curve in 
$M\setminus \mathscr M_F.$
\end{theorem}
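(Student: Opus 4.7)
My plan is to reduce the theorem to the case when both base points lie in the regular locus $M\setminus\mathscr M_F$, invoke Theorem~\ref{thm2} to continue one element to \emph{some} branch at the other base point, and then use a loop-based monodromy argument rooted in the irreducibility of each $\Psi_i(x,W_i)$ to adjust the curve so that the continuation lands on the prescribed branch.

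For the reduction, note that $\mathscr M_F=\cup_{i=1}^d\mathscr M_{W_i}$ (Lemma~\ref{lem22q}) is a proper analytic subset of $M$ of complex codimension one, so for any quasi-regular element $\tilde x=(U(x),H)\in\mathcal M^0_F$ the set $U(x)\setminus\mathscr M_F$ is dense in $U(x)$. Picking $x'\in U(x)\setminus\mathscr M_F$ yields a direct analytic continuation $(x',H)$ of $\tilde x$ along a short arc whose interior lies in $M\setminus\mathscr M_F$; similarly for $\tilde y$. It therefore suffices to prove the theorem assuming both base points lie in $M\setminus\mathscr M_F$. Since $M$ is a connected complex manifold and $\mathscr M_F$ is a thin analytic subset, the regular locus $M\setminus\mathscr M_F$ is path-connected, so I can choose a curve $\gamma_0$ in $M\setminus\mathscr M_F$ from $x'$ to $y'$. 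By Theorem~\ref{thm2}, $(x',H)$ continues uniquely along $\gamma_0$ to a quasi-regular element $(y',H')$, where $H'$ is one of the $\nu$ distinct branches at $y'$ given by Corollary~\ref{corrr2}.

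The remaining task is to produce a loop $\sigma$ based at $y'$ lying in $M\setminus\mathscr M_F$ whose monodromy sends $H'$ to the prescribed branch $G$; the concatenation $\gamma_0\cdot\sigma$ is then the required curve. This amounts to transitivity of the monodromy action of $F$ on its $\nu$ branches over $M\setminus\mathscr M_F$. For each coordinate function $W_i$, irreducibility of $\Psi_i(x,W_i)$ over $\mathscr M(M)$ forces transitivity of its individual monodromy on the $\nu_i$ branches: otherwise the elementary symmetric functions of a proper invariant subset of roots would descend to single-valued meromorphic functions on $M$, producing a proper factor of $\Psi_i$. Because the branches of $W_1,\dots,W_d$ are chosen independently in different coordinate positions (Definition~\ref{def3}), the monodromy of $F$ contains the product of the individual monodromy groups and therefore acts transitively on the $\nu=\nu_1\cdots\nu_d$ branches, supplying the desired $\sigma$.

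\textbf{Main obstacle.} The principal difficulty is the last step: translating the classical transitivity of monodromy for a single irreducible algebroid function into transitivity for $F$, whose branches are tuples of branches of the $W_i$. The argument hinges on properly interpreting the independence clause of Definition~\ref{def3} so that loops realizing arbitrary elements of $G_1\times\cdots\times G_d$ can be exhibited inside the monodromy of $F$. Without this independence one might only have a proper subgroup and transitivity could fail; the rest of the argument, consisting of a thinness-of-analytic-set reduction and an application of the already established Theorem~\ref{thm2} and Corollary~\ref{corrr2}, is essentially formal.
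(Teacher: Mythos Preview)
Your proposal is correct and shares with the paper's proof the central mechanism: transitivity of the monodromy of $F$ over $M\setminus\mathscr M_F$, deduced from the irreducibility of the defining equations via a symmetric-function descent. The paper, however, organizes the argument differently. Rather than factoring through the individual $W_i$ and then invoking the product structure, it works directly with $F$: it takes a maximal monodromy orbit $\{H_1,\ldots,H_n\}\subset\{H_1,\ldots,H_\nu\}$ at a fixed regular point, forms for each coordinate $i$ the polynomial $\prod_{j=1}^n(W-u_{ji})$ with $u_{ji}=\zeta_i\circ\jmath\circ H_j/\zeta_0\circ\jmath\circ H_j$, shows via Vieta's relations that its coefficients extend to single-valued meromorphic functions on all of $M\setminus\mathscr M_F$ (since monodromy merely permutes the $H_j$ within the orbit), and then argues that $n<\nu$ would force a nontrivial factorization of Eq.~(\ref{eq}). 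Your coordinate-wise route is more modular and has the virtue of isolating explicitly the role of the independence clause in Definition~\ref{def3}; the paper's direct argument leans on that same structural assumption in its final contradiction step (``it implies that Eq.~(\ref{eq}) is reducible'') but does not surface it, so your identification of this as the genuine crux is apt.
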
 
\begin{proof}
Let $\{(x, H_j)\}_{j=1}^\nu\subset\mathcal M^0_F$  be $\nu$ distinct  quasi-regular  mapping elements at  any point  $x\in M\setminus \mathscr M_F.$ Using  Corollary \ref{cor3},  it  
is sufficient   to prove  that  $(x, H_1), \cdots, (x, H_\nu)$ can  continue analytically to each other.  
Let  $n$ stand for    the largest    integer such that there exist  $n$ mapping elements in  $\{(x, H_j)\}_{j=1}^\nu,$ which can  continue analytically to each other along a curve.
 Clearly,  we have  $1\leq n\leq\nu.$
 Without loss of generality, we shall assume that 
  these  $n$ mapping elements are   $(x, H_1), \cdots, (x, H_n).$ 
For  $j=1,\cdots,n,$  set $$u_{ji}=\zeta_i\circ\jmath\circ H_j, \ \ \ \  i=1,\cdots,d.$$ 
  From the assumption,  $n$ is  also the largest    integer such that the  meromorphic mappings 
  $[1: u_{j1}: \cdots: u_{jd}]$ into $\mathbb P^d(\mathbb C)$ defined in a neighborhood  of $x,$
   with  $j=1,\cdots,n,$
   can  continue analytically to each other along a curve.
  Write 
  $$(W-u_{1i})(W-u_{2i})\cdots(W-u_{ni})=W^n+B_{(n-1)i}W^{n-1}+\cdots+B_{0i},$$
  where  $B_{0i}, \cdots, B_{(n-1)i}$ are holomorphic for $i=1,\cdots,d.$
By Vieta's theorem,  we obtain  
  \begin{eqnarray*}
\sum_{1\leq j\leq n}u_{ji} &=&-B_{(n-1)i} \\
 \cdots\cdots\cdots &&  \\
\sum_{1\leq j_1<\cdots<j_k\leq n}u_{j_1i}\cdots u_{j_ki}&=& (-1)^k B_{(n-k)i} \\
 \cdots\cdots\cdots && \\
u_{1i}\cdots u_{ni}&=& (-1)^n B_{0i} 
  \end{eqnarray*}
  It suffices   to show that
  
   $(i)$  $B_{1i},\cdots, B_{ni}$ can continue analytically to the whole 
$M\setminus \mathscr M_F;$ 

$(ii)$ $n=\nu.$ 

Using Theorem \ref{thm2},  $\{(x, H_j)\}_{j=1}^\nu$ can continue analytically to 
  $\{(y, G_j)\}_{j=1}^\nu$ along a curve for each  $y\in M\setminus \mathscr M_F,$ where $(G_1,\cdots, G_n)$ is a permutation of $(H_1,\cdots, H_n)$ due to the assumption.  
For  $j=1,\cdots,n,$ set $$v_{ji}=\zeta_i\circ\jmath\circ G_j, \ \ \ \  i=1,\cdots,d.$$ 
Then,  $(v_{1i},\cdots, v_{ni})$ is also  a permutation of $(u_{1i},\cdots, u_{ni})$ with $i=1,\cdots,d.$
From     Vieta's theorem,  note  that 
   $v_{1i},\cdots, v_{ni}$ determine
         the same $B_{1i},\cdots, B_{ni}$ in some    
         neighborhood of $y,$ 
          i.e.,  $B_{1i},\cdots, B_{ni}$ can  continue analytically to $y.$ Hence,  $(i)$ holds. 
If $n<\nu,$ then we are able to determine another set $\{B'_{ji}\}_{i,j}$ 
using the rest  mapping elements $\{(x, H_j)\}_j.$
 However, it implies  that 
Eq. (\ref{eq}) is reducible,   a contradiction. 
\end{proof}

According to Remark \ref{rek2} and Theorem \ref{thm3}, we obtain:  

\begin{cor} $\mathcal M^0_F$ is a path-connected space. 
\end{cor}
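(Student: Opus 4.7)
The plan is to assemble the statement directly from the two preceding results, using Theorem \ref{thm3} to produce an analytic continuation and Remark \ref{rek2} to interpret that continuation as a continuous path in the topology on $\mathcal M^0_F$. Since path-connectedness is a pointwise property, I only need to connect an arbitrary pair $\tilde x, \tilde y \in \mathcal M^0_F$.

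First I would fix two such quasi-regular mapping elements $\tilde x = (U(x), H_x)$ and $\tilde y = (U(y), H_y)$. By Theorem \ref{thm3} (Connectivity), there exists a curve $\gamma : [0,1] \to M \setminus \mathscr M_F$ with $\gamma(0) = x$, $\gamma(1) = y$, along which $\tilde x$ admits an analytic continuation whose terminal element is $\tilde y$. In particular, for every $t \in [0,1]$ there is a well-defined $\tilde \gamma(t) \in \mathcal M^0_F$ such that $\tilde \gamma(t)$ is a direct analytic continuation of $\tilde \gamma(t_0)$ for all $t$ sufficiently close to $t_0$.

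Next I would invoke Remark \ref{rek2}, which observes that an analytic continuation along $\gamma$ is precisely a continuous mapping $[0,1] \to \mathcal M^0_F$, since the defining property of a direct analytic continuation in a fixed $r$-neighborhood matches the definition of the topology on $\mathcal M^0_F$. Concretely, at each $t_0$ one chooses $\delta > 0$ so that $\gamma([t_0 - \delta, t_0 + \delta])$ lies in the domain of the local representative of $\tilde \gamma(t_0)$; then $\tilde \gamma(t) \in V(\tilde \gamma(t_0), \varepsilon)$ for $|t - t_0| < \delta$, which is exactly continuity at $t_0$. Hence $t \mapsto \tilde \gamma(t)$ is a path from $\tilde x$ to $\tilde y$ in $\mathcal M^0_F$.

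There is essentially no obstacle here beyond bookkeeping: the substantive work was done in establishing Theorem \ref{thm3}, whose proof traverses $M \setminus \mathscr M_F$ using Corollary \ref{corrr2} to guarantee $\nu$ distinct local branches at every point and Vieta's relations to rule out $n < \nu$. The only mild subtlety worth mentioning explicitly is that the curve produced by Theorem \ref{thm3} lies in $M \setminus \mathscr M_F$, which is precisely the regime where direct analytic continuations exist; this is what allows the path in $\mathcal M^0_F$ to be traced without ambiguity at any intermediate time.
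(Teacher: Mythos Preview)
Your proposal is correct and matches the paper's approach exactly: the paper simply states that the corollary follows from Remark \ref{rek2} and Theorem \ref{thm3}, and you have spelled out precisely how those two ingredients combine to give a continuous path in $\mathcal M^0_F$ between any two quasi-regular mapping elements.
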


 \subsection{Properties of   Mapping  Elements near  Multiple  Points}~

Let $(x_0, y_0)\in M\times N$ be  a pair such   that 
$$\Psi_i(x_0, c_i)=0, \ \ \  \    \Psi_{i,i}(x_0, c_i)=0$$
for some  $i\in\{1,\cdots,d\},$ where 
$$c_i=\frac{\zeta_i\circ\jmath(y_0)}{\zeta_0\circ\jmath(y_0)}.$$
  $1^\circ$  $x_0\not\in\cup_{i=1}^d A^{-1}_{i\nu_i}(0)$

 Choosing  a  sufficiently  small neighborhood $U(x_0)$ of $x_0$ such that $A_{i\nu_i}\not=0$ on $\overline{U(x_0)}$ for $i=1,\cdots,d.$
 According to  the continuation of $A_{ij}$ for $1\le i\le d$ and $0\le j\le \nu_i,$   there exists a number $M_0>0$ such that 
 \begin{equation}\label{haoba}
 0\le\min_{\substack{1\le i\le d \\ 0\le j\le\nu_i-1}}\left|\frac{A_{ij}}{A_{i\nu_i}}\right|\leq\max_{\substack{1\le i\le d \\  0\le j\le\nu_i-1}}\left|\frac{A_{ij}}{A_{i\nu_i}}\right|\leq M_{0}, \ \ \ \   ^\forall x\in \overline{U(x_0)}.
 \end{equation}  
According to   Corollary \ref{cor2},  for any  $x, y\in U(x_0)\setminus \mathscr M_F$ with $x\not=y,$ 
  there are   two groups of  distinct quasi-regular mapping  elements 
 $\{x, H_j\}_{j=1}^{\nu}, \{y, G_j\}_{j=1}^{\nu}$ of $F.$ 
     Applying   Theorem \ref{thm2} again,   for a  curve $\gamma: [0,1]\rightarrow U(x_0)\setminus \mathscr M_F$
          with   $\gamma(0)=x$ and $\gamma(1)=y,$   
                    one can extend      $\{x, H_j\}_{j=1}^{\nu}$   analytically to $\{y, G_j\}_{j=1}^{\nu}$ along $\gamma.$
                                            In other words, 
 there exists   a permutation $(i_1,\cdots, i_{\nu})$ of $(1,\cdots,\nu)$ 
   such that $H_1,\cdots, H_{\nu}$ can  be  continued analytically
       to $G_{i_1},\cdots, G_{i_{\nu}}$ along $\gamma,$ respectively.  
   The  symmetric property implies that 
 $\{y, G_j\}_{j=1}^{\nu}$  
  shall   be also  continued  analytically to $\{x, H_j\}_{j=1}^{\nu}$ along $\gamma.$ 
           However,  we  may  mention that  $G_{i_1},\cdots, G_{i_\nu}$ 
                         won't  be, in  general,  continued analytically back to $H_{1},\cdots, H_{\nu}$ along $\gamma,$  respectively.       
        In what follows,  we     consider     the analytic continuation of a mapping element along a  closed Jordan curve surrounding  any multiple point 
      $x_0\in \mathscr M_F.$ 
             Taking    a smooth closed Jordan curve  
   $\gamma_0\subset  U(x_0)$   around $x_0$ such that  which is close to  $x_0$   and  doesn't surround or cross 
        any  connected components  of $\mathscr M_F$ that do not pass through $x_0.$
               Without loss of generality, let us  look at   the  analytic continuation of  $H_1$   along $\gamma_0$ starting from $y_0\in\gamma_0.$  
When $H_1$ returns  to $y_0$  along $\gamma_0,$   it  will  turn   to  a    mapping  
 $H_{k_1}\in\{H_j\}_{j=1}^\nu,$ 
  by which we mean that for a local coordinate chart $(U(x_0), \varphi)$ (here $U(x_0)$ is  small enough), $H_1\circ\varphi^{-1}(z)$ 
    jumps   to a  single-valued component $H_{k_1}\circ\varphi^{-1}(z)$ when $z$ travels  
        around  $\varphi(x_0)$ once  along $\varphi(\gamma_0)$ starting from $\varphi(y_0),$ 
                due to the change of 
 $m$-dimensional argument of $z=(z_1,\cdots,z_m).$
   If $k_1\not= 1,$ then we  will  continue to extend $u_{k_1}$  analytically along $\gamma_0$ in the same direction of rotation. 
Repeat      this action,   we shall   see finally  that  $H_1$ cycles on and on periodically.
Let $\lambda_1$ be    the smallest period  such that $H_1$  can return to itself along all such curves $\gamma_0.$
    Then, one can  obtain        $H_1,  H_{k_1},  \cdots,  H_{k_{\lambda_1-1}}.$  
     We  call $x_0$     a \emph{branch point} with order $\lambda_1-1$ of $H_1$ for  $\lambda_1>1.$ 
                  Repeatedly, it   can be  seen  that $\{H_j\}_{j=1}^\nu$    is  divided  into finitely many    groups. 
                                       Let $l$ be    the number of groups. 
              If the $j$-th 
group has  $\lambda_j$ members for $j=1,\cdots,l,$
 then  $$\lambda_1+\cdots+\lambda_l=\nu.$$
        We  just analyze    the first group:  $H_1, H_{k_1}, \cdots, H_{k_{\lambda_1-1}}.$
 Take    a   suitable   $U(x_0),$    
     there  exists        
   a biholomorphic mapping $\varphi: U(x_0)\to \Delta^m(2)$  such that  
   $$\varphi(x_0)=\textbf{0}, \ \ \ \    \varphi(\gamma_0)=\big\{(0,\cdots,0,z_m)\in \Delta^m(2): \ |z_m|=1\big\},$$
where   $\Delta^m(2)$ is   the polydisc  centered at  $\textbf{0}$ with polyradius $(2,\cdots,2)$ in  $\mathbb C^m.$
Setting   $S_0=\varphi^{-1}(\Delta_m(1)),$ 
where  $\Delta_m(1)$ is  the unit disc centered at $\textbf{0}$  in the $z_m$-plane.  
For  any $x\in S_{0}\cap \gamma_0,$ we have   
 $$H_1(z)=H_1\circ\varphi(x)=H_1(0,\cdots, 0, z_m).$$
 By  Corollary \ref{cor1}, $\zeta_0\circ\jmath\circ H_1$ has no zeros.  Write 
$\jmath\circ H_1=[1: u_{11}:\cdots: u_{1d}],$
where  
$$u_{1i}=\frac{\zeta_i\circ\jmath\circ H_{1}}{\zeta_0\circ\jmath\circ H_{1}}, \ \ \ \   i=1,\cdots,d$$
are   $\lambda_{1i}$-valued algebroid functions,  respectively, with    $\lambda_1=\lambda_{11}\cdots\lambda_{1d}.$ 
Set   
 $$u^*_{1i}(\eta)=u_{1i}(0,\cdots,0,\eta^{\lambda_{1i}}), \ \ \ \    i=1,\cdots,d.$$
 We  show   that $u^*_{1i}$ is a single-valued analytic  function 
    on $\Delta_m(1).$  
  Note  that $\lambda_1$ is the branch order of  $x_0,$ we deduce  that 
 $u^*_{1i}$ is holomorphic on $\Delta_m(1)\setminus\{\textbf{0}\},$ because   
$\varphi(\gamma_0)$ is so close to $\textbf{0}$ that there exist  no any  branch points of $u^*_{1i}$ in $\Delta_m(1)\setminus\{\textbf 0\}.$
 We  only  need to examine  that $\zeta=0$ is a removed singularity of $u^*_{1i}.$ To do so, it suffices   to show    that $u^*_{1i}$ is bounded near $\textbf{0}$
 due to    Riemann's analytic continuation.  
  In fact, 
       it follows  from    
 (\ref{eq}) and (\ref{haoba}) that    
     \begin{eqnarray*}
\left|u_{1i}\right|&=& \frac{1}{\left|A_{i\nu_i} {u_{1i}}^{\nu_i-1}\right|}\left|A_{i(\nu_i-1)}{u_{1i}}^{\nu_i-1}+\cdots+A_{i0}\right| \\
&\leq& M_0\left(1+\frac{1}{|u_{1i}|}+\cdots+\frac{1}{|u_{1i}|^{\nu_{i}-1}}\right)
  \end{eqnarray*}
on $\overline{U(x_0)}.$ If  $|u_{1i}|\geq1,$ then   $|u_{1i}|\leq\nu_i M_0.$  Hence,  it gives 
$$|u_{1i}|\leq\max\{1, \nu_{i} M_0\}, \ \ \ \     ^\forall x\in\overline{U(x_0)}.$$ 
  That is,   $u_{1i}$ is bounded on $\overline{U(x_0)}.$
  It means that  $u^*_{1i}$  is holomorphic on $\Delta_m(1).$  For $i=1,\cdots,d,$
 we  can expand  $u^*_{1i}$  into   the following    Taylor  series   
   \begin{equation}\label{dxj1}
   u^*_{1i}(\eta)=b_{i0}+b_{i\tau_i}\eta^{\tau_i}+b_{i(\tau_i+1)}\eta^{\tau_i+1}+\cdots
   \end{equation}
at   $\eta=0.$  
Let $\mathscr B_{H_1}$ be    the set of all branch points of $H_1$  in $U(x_0).$  
We say  that a branch point $x_0$ of $H_1$  is  \emph{non-singular},  
if $x_0$ does not  lie in any crossings of components of $\mathscr B_{H_1};$ 
and  \emph{singular} otherwise. 
Next, assume   that     $x_0$ is  not  a  singular branch point of $H_1.$
Then,  $\mathscr B_{H_1}$ is   smooth  whenever   $U(x_0)$  is  small enough.
Take a  suitable $U(x_0)$ if necessary.   
Since   $H_1$ is bounded on $\overline{U(x_0)},$ 
and  $\lambda_1$ defines  the branch order of $H_1$ at $x_0$ that  is  
 independent of the choice of $\gamma_0,$  
    we  conclude      from    $(\ref{dxj1})$ and  Riemann’s analytic continuation  that 
 there exists   a biholomorphic mapping $\varphi: U(x_0)\to\Delta^m(1)$ with  
  $$\varphi(x_0)=\textbf{0}; \ \ \ \    \varphi(x)=(\hat z_m(x),0), \ \ \ \
  ^\forall   x\in \mathscr B_{H_1},$$
  such  that 
  each $u_{1i}$  can be expanded  into the  following   Pusieux series 
   $$u_{1i}(z)=B_{i0}(\hat z_m)+B_{i\tau_i}(\hat z_m)z_m^{\frac{\tau_i}{\lambda_1}}+B_{i(\tau_i+1)}(\hat z_m)z_m^{\frac{\tau_i+1}{\lambda_1}}+\cdots$$
   at  $x_0,$ where    $B_{i0}, B_{i\tau_i},\cdots$ are holomorphic functions  in $\hat z_m=(z_1,\cdots,z_{m-1}).$    
   Comparing coefficients, we deduce that   
   $$B_{ij}(\textbf{0})=b_{ij}, \ \ \ \    j=0,\tau_i, \tau_i+1, \cdots$$
   According to the  above arguments,  
     we show that $H_1$ is a $\lambda_1$-valued analytic  mapping defined   in a neighborhood of $x_0.$ 
     We call $(x_0, H_1)$ a \emph{$\lambda_1$-leaf mapping  element}.
    If one cuts $U(x_0)$ along a real hypersurface   passing    through $\mathscr B_{H_1},$ 
  then  $H_1$  can  be separated  into $\lambda_1$ single-valued analytic components, 
    and all these $\lambda_1$  components can turn to each other when they continue analytically along a closed curve around $x_0$ in $U(x_0).$

\noindent $2^\circ$  $x_0\in\cup_{i=1}^d A^{-1}_{i\nu_i}(0)$

Assume that $x_0$ is  not a singular branch point of $H_1.$ 
 For $i=1,\cdots,d,$ we set $W_i=1/V_i$ and define  
      \begin{eqnarray*}
\Phi_i(x,W_i)&:=&W_i^{\nu_i}\Psi_j(x,V_i) \\
&=& A_{i\nu_i}(x)+A_{i(\nu_i-1)}(x)W_i+\cdots+A_{i0}(x)W^{\nu_i}_i.
     \end{eqnarray*}   
 $a)$      $x_0\not\in\cup_{i=1}^d A^{-1}_{i0}(0)$  
 
By the previous arguments, one can also expand $u_{1i}$ locally into a Pusieux
series of the form   $$u_{1i}(z)=C_{i0}(\hat z_m)+C_{i\tau_i}(\hat z_m)z_m^{\frac{\tau_i}{\lambda_1}}+C_{i(\tau_i+1)}(\hat z_m)z_m^{\frac{\tau_i+1}{\lambda_1}}+\cdots$$
   at $x_0.$
If  $C_{i0}\not\equiv0,$  then  $u_{1i}^{-1}$ can be expanded at $x_0$ in the form
     \begin{eqnarray*}
   u_{1i}(z)^{-1}&=& \tilde C_{i(-\tau_i)}(\hat z_m)z_m^{-\frac{\tau_i}{\lambda_1}}+\tilde C_{i(-\tau_i+1)}(\hat z_m)z_m^{-\frac{\tau_i-1}{\lambda_1}}+\cdots \\
  && +~\tilde C_{i(-1)}(\hat z_m)z_m^{-\frac{1}{\lambda_1}}+\frac{1}{C_{i0}(\hat z_m)}+\tilde C_{i1}(\hat z_m)z_m^{\frac{1}{\lambda_1}}+\cdots
        \end{eqnarray*}
   $b)$       $x_0\in\cup_{i=1}^d A^{-1}_{i0}(0)$ 
   
Take a number  $c_0$  with  $\Psi_i(x_0, c_0)\not=0$ for all $i.$  Put $\tilde W_i=W_i-c_0.$ Then 
     \begin{eqnarray*}
\tilde\Psi_i(x,\tilde W_i)&:=&\Psi_i(x,\tilde W_i+c_0) \\
&=& \tilde A_{i\nu_i}(x)\tilde W_i^{\nu_i}+\tilde A_{i(\nu_i-1)}(x)\tilde W_i^{\nu_i-1}+\cdots+\tilde A_{i0}(x).
     \end{eqnarray*}
Evidently,   $\tilde A_{i0}(x_0)=\tilde\Psi_i(x_0, 0)=\Psi_i(x_0, c_0)\not=0.$ Hence,  the case $b)$  turns to the  case $a).$

To conclude,   for each   $x\in M,$ there are   $l=l(x)$ distinct multi-leaf   
mapping elements of $F,$ denoted by  $(x, H_1), \cdots, (x, H_l),$ in which   $H_j$ is a  $\nu_j$-valued analytic    mapping  with     
$1\leq\nu_j\leq\nu$ and $\nu_1+\cdots+\nu_l=\nu.$ 
   Note that   $H_j$ has branch order $\nu_j-1$ at $x$ with $j=1,\cdots,l.$ 
        If  $l<\nu,$  then one says that  $x$  is a branch point with order $\nu-l$ of $F.$
        Let  $\mathscr B_F$  denote   the set of all  branch points of $F,$ which  is an analytic set  in $M,$ called the \emph{branch set} of $F.$ 
         Recall that 
      $$W_i=\frac{\zeta_i\circ\jmath\circ F}{\zeta_0\circ\jmath\circ F}, \ \ \ \ i=1,\cdots,d.$$
      We have shown   that 
\begin{theorem}\label{ses}
Let   $(x_0, H)$ be     
a $\lambda$-leaf  mapping  element of $F.$ 
Assume that 
$$u_{i}=\frac{\zeta_i\circ\jmath\circ H}{\zeta_0\circ\jmath\circ H}, \ \ \ \  i=1\cdots,d$$ 
are   $\lambda_i$-valued   components  of    $W_i$ in a neighborhood of $x_0,$ respectively.   
 Then,  we have $\lambda=\lambda_1\cdots\lambda_d.$   Moreover, 
if $x_0$ is  not a  singular  branch point of $H,$ 
then   $u_i$
 can be expanded into the following  convergent Pusieux series 
   $$u_i(z)=B_{i0}(\hat z_m)+B_{i\tau_i}(\hat z_m)z_m^{\frac{\tau_i}{\lambda_i}}+B_{i(\tau_i+1)}(\hat z_m)z_m^{\frac{\tau_i+1}{\lambda_i}}+\cdots$$
 at $x_0$ in a local holomorphic coordinate $z=(z_1,\cdots,z_m)$  such  that   $z(x_0)$={\rm{\textbf{0}}} and $z(x)=(\hat z_m(x), 0)$ for all $x\in\mathscr B_H,$ 
 where  $\mathscr B_H$ denotes  the branch set of $H$ in a  neighborhood of $x_0,$ and $B_{i0}, B_{i\tau_i}, \cdots$ are  meromorphic functions in $\hat z_m=(z_1,\cdots,z_{m-1}).$ 
\end{theorem}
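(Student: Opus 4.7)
The statement is essentially a clean repackaging of the local analysis carried out between Corollary \ref{corrr2} and the theorem, so the plan is to split it into two independent assertions and draw each one from the preceding discussion.

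\emph{Step 1: The multiplicativity $\lambda=\lambda_1\cdots\lambda_d$.} I would first reduce to the generic picture: shrink $U(x_0)$ so that the branches of $H$ stay well-defined away from $\mathscr B_H$, and work at a nearby point $x\in U(x_0)\setminus\mathscr M_F$. At such an $x$, Corollary \ref{cor2} produces $\nu_i$ distinct local values for each $W_i$ and $\nu=\nu_1\cdots\nu_d$ distinct local mapping elements of $F$. The component $H$ selects a subset of these elements, namely those obtained by analytic continuation along small loops around $x_0$ staying in $U(x_0)\setminus\mathscr B_H$. Because the argument-monodromy in the $z_m$-direction acts on each coordinate $u_i$ independently (this is precisely the independence clause emphasized in Definition \ref{def3}), the orbit of a single mapping element under this monodromy action is the product of the orbits of its $d$ coordinates. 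The orbit of the $i$-th coordinate has size $\lambda_i$ by definition, and the orbit of the mapping element has size $\lambda$, giving the identity $\lambda=\lambda_1\cdots\lambda_d$.

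\emph{Step 2: The Puiseux expansion, $x_0\notin\bigcup_i A_{i\nu_i}^{-1}(0)$.} Here I would invoke exactly the construction performed in subsection 2.4 case $1^\circ$. Using smoothness of $\mathscr B_H$ (guaranteed because $x_0$ is not a singular branch point), choose a biholomorphism $\varphi\colon U(x_0)\to\Delta^m(1)$ sending $x_0$ to $\textbf{0}$ and $\mathscr B_H$ into $\{z_m=0\}$. For each $i$, introduce the uniformizer $\eta$ by $z_m=\eta^{\lambda_i}$ and form
\[
u_i^*(\hat z_m,\eta)=u_i(\hat z_m,\eta^{\lambda_i}).
\]
Since $\lambda_i$ is the monodromy order of $u_i$ around $z_m=0$, the function $u_i^*$ is single-valued on $\Delta^{m-1}\times(\Delta\setminus\{0\})$. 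The bound (\ref{haoba}) on the coefficients of $\Psi_i$ (valid on $\overline{U(x_0)}$ since $A_{i\nu_i}$ is non-vanishing there) yields $|u_i|\le\max\{1,\nu_iM_0\}$ as in the argument already given, so $u_i^*$ is locally bounded and extends holomorphically across $\eta=0$ by Riemann's removable singularity theorem. Taylor-expanding $u_i^*$ in $\eta$ and substituting $\eta=z_m^{1/\lambda_i}$ produces the desired Puiseux expansion with coefficients that are holomorphic in $\hat z_m$.

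\emph{Step 3: The remaining case $x_0\in A_{i\nu_i}^{-1}(0)$.} For the indices where $A_{i\nu_i}(x_0)=0$, the boundedness argument of Step 2 fails, so I would apply the reduction sketched in case $2^\circ$ of subsection 2.4: if $x_0\notin A_{i0}^{-1}(0)$, pass to $V_i=1/W_i$, whose defining polynomial $\Phi_i$ has non-vanishing leading coefficient at $x_0$; expand $V_i$ by Step 2 and then invert the series to get a Puiseux expansion of $u_i$ with possibly negative fractional powers, whose coefficients are meromorphic in $\hat z_m$. If $x_0\in A_{i0}^{-1}(0)$ as well, shift $W_i\mapsto W_i-c_0$ for a generic constant $c_0$ to reduce to the previous subcase. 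Collecting Steps 1--3 gives the theorem.

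The main difficulty I anticipate is bookkeeping rather than a single hard step: one must verify that the uniformizer $\eta=z_m^{1/\lambda_i}$ really captures the full monodromy of $u_i$ (and not a proper divisor of $\lambda_i$), which is where the hypothesis that $x_0$ is not a singular branch point is essential—it ensures $\mathscr B_H$ is a smooth hypersurface so that a single transverse coordinate $z_m$ suffices, and that the periods seen along small loops in the $z_m$-plane are independent of the transverse position $\hat z_m$.
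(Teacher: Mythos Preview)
Your proposal is correct and follows essentially the same route as the paper: the theorem is presented there as a summary (``We have shown that'') of the case analysis $1^\circ$/$2^\circ$ in subsection~2.4, and your Steps~2 and~3 reproduce that analysis almost verbatim, including the boundedness estimate via (\ref{haoba}), Riemann extension after the substitution $z_m=\eta^{\lambda_i}$, and the $V_i=1/W_i$ and shift-by-$c_0$ reductions. Your Step~1 monodromy argument for $\lambda=\lambda_1\cdots\lambda_d$ is somewhat more explicit than the paper's, which simply asserts the identity in passing as a consequence of the independence clause in Definition~\ref{def3}; but the underlying mechanism is the same.
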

  
Let   $F_1,\cdots, F_\nu$ be       $\nu$ distinct single-valued  components of $F.$  
Assume that    $(U(x_0), H)$ is    a   $\lambda$-leaf    mapping element  of $F$ at $x_0$ 
      with   distinct  single-valued   components 
 $(U(x_0), H_1), \cdots, (U(x_0), H_\lambda).$ 
Then,    there exists    a permutation 
    $(i_1,\cdots,i_\nu)$ of $(1,\cdots,\nu)$ 
        such that   $H_1=F_1,\cdots, H_\lambda=F_{i_\lambda}$ on $U(x_0).$

 \section{Multi-leaf Complex  Manifolds by  Algebroid  Mappings}

Let $\tilde x=(U(x), H)\in\mathcal M^0_F$ be any mapping element.  A 
 neighborhood $V(\tilde x)$ of $\tilde x$  means  a set of all 
  $\tilde y\in \mathcal M^0_F$ with   $y\in U(x),$ such that   
 $\tilde y$  is a  direct analytic continuation  of $\tilde x,$  i.e.,   
 $V(\tilde x)=\{(y, H): y\in U(x)\}.$  For   a proper   $U(x),$  
  there is  a  biholomorphic mapping $\varphi_x: U(x)\to \mathbb B^m(1)$ 
        such that $\varphi_x(x)=\textbf{0},$ 
        where $\mathbb B^m(1)$ is the unit ball centered at $\textbf{0}$  in $\mathbb C^m.$
         Define a natural mapping $$\psi_{\tilde x}: V(\tilde x)\to \mathbb B^m(1)$$ via  
$$\psi_{\tilde x}(\tilde y)=\varphi_x(y), \ \ \  \  ^\forall \tilde y\in V(\tilde x).$$
 Clearly, $\psi_{\tilde x}$ is a homeomorphism whose  inverse is 
 $$\psi^{-1}_{\tilde x}(z)=\big(\varphi^{-1}_x(z), H\big), \ \ \  \  ^\forall z\in\mathbb B^m(1).$$
\ \ \ \  Set $$\mathscr U_F=\left\{(V(\tilde x), \psi_{\tilde x}): \tilde x\in\mathcal M^0_F\right\}.$$  We  show that  $\mathscr U_F$  is a complex  atlas  of $\mathcal M^0_F.$
Let $V(\tilde x)\cap V(\tilde y)\not=\emptyset.$  It  suffices   to examine     that  the transition mapping 
$$\psi_{\tilde y}\circ\psi^{-1}_{\tilde x}: \ \psi_{\tilde x}\left(V(\tilde x)\cap V(\tilde y)\right)\to \psi_{\tilde y}\left(V(\tilde x)\cap V(\tilde y)\right)$$
is a biholomorphic mapping. This is automatically a homeomorphism. 
Note  that for  any $\tilde a\in V(\tilde x)\cap V(\tilde y),$ we have $H=G$ in a small neighborhood of $a,$ i.e., $(a, H)=(a, G).$ 
A direct  computation leads to  
 $$\psi_{\tilde y}\circ\psi^{-1}_{\tilde x}(z)=\psi_{\tilde y}\big(\varphi^{-1}_x(z), H\big)
 =\psi_{\tilde y}\big(\varphi^{-1}_x(z), G\big)=\varphi_y\circ\varphi^{-1}_x(z)$$
 for  any $z\in \psi_{\tilde x}(V(\tilde x)\cap V(\tilde y)).$ Similarly, we have  
 $$\psi_{\tilde x}\circ\psi^{-1}_{\tilde y}(z)=\varphi_x\circ\varphi^{-1}_y(z), \ \ \ \     ^\forall z\in \psi_{\tilde y}(V(\tilde x)\cap V(\tilde y)).$$ 
 It shows that  $\psi_{\tilde y}\circ\psi^{-1}_{\tilde x}$ is a  biholomorphic mapping. 
 On the other hand,  
  each  $x\in M\setminus \mathscr B_F$  corresponds to  
$\nu$ distinct  mapping  elements
  $\{(x, H_j)\}_{j=1}^\nu\subset\mathcal M^0_F.$  That is,  
 we conclude that
\begin{theorem} $\mathcal M^0_F$ is  a non-compact $\nu$-leaf complex manifold of complex dimension $m.$
\end{theorem}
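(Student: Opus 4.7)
The plan is to assemble the structural facts that have already been established in the preceding discussion into a unified verification. The topological content is in hand: $\mathcal M^0_F$ is Hausdorff by Theorem \ref{thm4}, and path-connectedness follows from the Connectivity Theorem \ref{thm3}. Second countability will be inherited from $M$ via the natural projection $\pi:\mathcal M^0_F\to M$, $(U(x),H)\mapsto x$, since each basic neighborhood $V(\tilde x)$ is a homeomorphic copy (through $\psi_{\tilde x}$) of a coordinate ball in $M$, and $M$ is second countable.

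Next I would verify that $\mathscr U_F=\{(V(\tilde x),\psi_{\tilde x}):\tilde x\in\mathcal M^0_F\}$ really is a holomorphic atlas. Coverage is immediate because $\tilde x\in V(\tilde x)$ for every $\tilde x$, and each $\psi_{\tilde x}$ is by construction a homeomorphism onto $\mathbb B^m(1)\subset\mathbb C^m$. The transition function on $\psi_{\tilde x}(V(\tilde x)\cap V(\tilde y))$ has been computed above to coincide with $\varphi_y\circ\varphi_x^{-1}$, which is biholomorphic because $\varphi_x$ and $\varphi_y$ are biholomorphic charts on $M$. This upgrades $\mathcal M^0_F$ to a complex manifold of complex dimension $m$.

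The $\nu$-leaf structure comes from analyzing $\pi$. In the local coordinates provided by $\mathscr U_F$ one reads off $\varphi_x\circ\pi\circ\psi_{\tilde x}^{-1}={\rm id}_{\mathbb B^m(1)}$, so $\pi$ is a holomorphic local biholomorphism. Corollary \ref{corrr2} shows that each fiber $\pi^{-1}(x)$ over $x\in M\setminus\mathscr M_F$ consists of exactly $\nu$ distinct elements; Theorem \ref{thm2} then propagates a given fiber analytically along any curve in $M\setminus\mathscr M_F$, and Theorem \ref{thm3} guarantees that these $\nu$ sheets genuinely form a single connected system without extra splitting. Consequently $\pi$ is an unramified $\nu$-sheeted analytic covering over $M\setminus\mathscr M_F$, which is precisely the $\nu$-leaf property.

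Finally, non-compactness follows from the observation that $\mathscr M_F$ is a nontrivial analytic subvariety of $M$ (of complex codimension one, via the discriminant analysis of $R_{\Psi_i}$ and $J_{\Psi_i}$ that showed $\mathscr S_{W_i}$ is codimension one), and the preimages of its points are absent from $\mathcal M^0_F$ by construction: any sequence $\tilde x_k$ whose $M$-projections converge to a point of $\mathscr M_F$ has no limit in $\mathcal M^0_F$. The main subtle point — and really the only place where nontrivial work beyond bookkeeping is required — is the global consistency of the atlas, namely that the local sheets assembled through the analytic-continuation machinery of Section \ref{sec2} truly glue into a single complex manifold; this is exactly what Theorems \ref{thm2} and \ref{thm3} were set up to provide, so the remainder of the proof is direct assembly of the pieces listed above.
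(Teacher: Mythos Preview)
Your approach is essentially the one the paper uses: the atlas $\mathscr U_F$, the computation $\psi_{\tilde y}\circ\psi_{\tilde x}^{-1}=\varphi_y\circ\varphi_x^{-1}$, and the $\nu$-sheet count via Corollary~\ref{corrr2} are exactly the ingredients the paper assembles in the paragraph preceding the theorem. You supply a few additional topological checks (second countability, explicit reference to path-connectedness) that the paper leaves implicit, which is fine.

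One imprecision in your non-compactness argument: it is not true that \emph{all} preimages of points of $\mathscr M_F$ are absent from $\mathcal M^0_F$. At a multiple point $x_0$ only \emph{some} of the $\nu$ expected quasi-regular elements coalesce into a multi-leaf element; the remaining single-leaf elements at $x_0$ (if any) are still in $\mathcal M^0_F$. The correct statement is that over each branch point at least one sheet is missing, so a sequence in $\mathcal M^0_F$ projecting to a curve that winds around a component of $\mathscr B_F$ and tracks a genuinely multi-leaf element has no limit in $\mathcal M^0_F$. With this adjustment your argument goes through; the paper itself does not spell out the non-compactness step.
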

Set 
 $$\mathcal M^1_F=\big{\{}\text{all regular mapping elements of $F$}\big{\}}/\sim.$$ 
\begin{cor} $\mathcal M^1_F$ is  a non-compact $\nu$-leaf complex manifold of complex dimension $m,$ which is a submanifold of $\mathcal M^0_F.$
\end{cor}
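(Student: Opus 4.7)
The plan is to exhibit $\mathcal M^1_F$ as an open subset of $\mathcal M^0_F,$ and then conclude that it inherits everything from the theorem just established. Since a regular mapping element is by definition a quasi-regular mapping element whose representing map is holomorphic (not merely meromorphic) in a neighborhood of its base point, we clearly have the set-theoretic inclusion $\mathcal M^1_F\subseteq \mathcal M^0_F,$ and this inclusion is compatible with the equivalence relation $\sim.$

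The key step is the openness. Pick $\tilde x=(U(x),H)\in\mathcal M^1_F,$ so $H$ is holomorphic throughout $U(x).$ For any point $\tilde y=(y,H)\in V(\tilde x),$ the representative $H$ is defined and holomorphic on $U(x),$ hence holomorphic on a neighborhood of $y$ as well. Therefore $\tilde y$ is itself a regular mapping element, which shows $V(\tilde x)\subset \mathcal M^1_F.$ Thus every point of $\mathcal M^1_F$ has a full chart neighborhood contained in $\mathcal M^1_F,$ so $\mathcal M^1_F$ is open in $\mathcal M^0_F.$ By restriction of the atlas $\mathscr U_F$ constructed above, $\mathcal M^1_F$ is a complex submanifold of $\mathcal M^0_F$ of the same complex dimension $m.$

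To check the $\nu$-leaf property, I would invoke Corollary \ref{cor1}: for every $x\in M\setminus\bigcup_{i=1}^d\mathscr S_{W_i},$ there are exactly $\nu$ distinct regular mapping elements of $F$ over $x.$ Since this open dense subset of $M$ lifts to $\nu$ sheets inside $\mathcal M^1_F,$ the submanifold $\mathcal M^1_F$ inherits the $\nu$-leaf structure of $\mathcal M^0_F.$ Finally, non-compactness of $\mathcal M^1_F$ follows immediately from non-compactness of $M$: the natural projection $\pi:\mathcal M^1_F\to M$ sending $(x,H)\mapsto x$ has image containing the open set $M\setminus\bigcup_{i=1}^d\mathscr S_{W_i}$ with finite fibers of cardinality $\nu,$ so a compact $\mathcal M^1_F$ would force this open subset of $M$ to be relatively compact, contradicting the fact that $M$ is non-compact.

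I expect no serious obstacle here; the only point that requires a moment of care is verifying that the charts $(V(\tilde x),\psi_{\tilde x})$ restrict cleanly to $\mathcal M^1_F,$ which is automatic once openness is established. The corollary is therefore essentially a bookkeeping consequence of the theorem.
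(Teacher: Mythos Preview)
Your argument is correct and is precisely the natural elaboration the paper has in mind; the paper states this corollary without proof, treating it as immediate from the preceding theorem, so your openness argument and appeal to Corollary~\ref{cor1} simply fill in the expected details. One small caveat: your non-compactness step invokes non-compactness of $M$, which is not yet an explicit hypothesis in this section (it enters only later for the equidistribution results); a cleaner route, independent of that assumption, is to note that $\mathcal M^1_F$ is open in the connected Hausdorff space $\mathcal M^0_F$, so if it were compact it would be clopen and hence equal to $\mathcal M^0_F$, which the theorem already asserts is non-compact.
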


More generally,  we  would    construct a $\nu$-leaf  complex manifold determined  by  all  the mapping  elements of $F.$ 
Let  $\tilde x_1=(U(x_1), H_1), \tilde x_2=(U(x_2), H_2)$ be any  two   mapping  elements of $F.$ 
We define an  equivalent relation as follows:  $\tilde x_1\sim\tilde x_2$ if and only if 
 $$x_1=x_2; \ \ \ \  H_{10}(x)= H_{20}(x), \ \  \ \    ^\forall x\in  \left(U(x_1)\setminus\mathscr B_{H_1}\right)\cap \left(U(x_2)\setminus\mathscr B_{H_2}\right)$$
 for some   single-valued component $H_{j0}$ of $H_j$ with    $j=1,2,$  where $\mathscr B_{H_j}$ is the branch set of $H_j$ for   $j=1,2.$ 
 Set 
 $$\mathcal M_F=\big{\{}\text{all mapping elements of $F$}\big{\}}/\sim.$$

\begin{defi}
A  quasi-regular  mapping  element $(y, G)$ is called   a direct analytic continuation of a multi-leaf   mapping element $(U(x), H),$ if 
both $y\in U(x)\setminus\mathscr B_{H}$ and $G=H_{0}$
in a  neighborhood of $y$ for some single-valued component $H_{0}$ of $H,$ where $\mathscr B_H$ is the branch set of $H.$   
 If so, one may regard that $(y, G)=(y, H_0).$
\end{defi}
 Let $\tilde x=(U(x), H)\in\mathcal M_F.$  A  neighborhood $V(\tilde x)$ of $\tilde x$ is  defined to be the set that  contains the following elements: 
      \begin{enumerate}
   \item[$\bullet$] all  $\tilde y\in \mathcal M_F$ with  $y\in U_1(x)\setminus\mathscr B_{H}$ satisfying  that   
 $\tilde y$  is a  direct analytic continuation  of $\tilde x$; 
  \item[$\bullet$]  all  $(y, H)\in \mathcal M_F$ with  $y\in U_1(x)\cap \mathscr B_H.$ In the above,    $U_1(x)\subset U(x)$ is a neighborhood of $x.$
    \end{enumerate}
  
In particular,  a $r$-\emph{neighborhood} $V(\tilde x, \rho)$ of $\tilde x$ with $B(x, r)\subset U(x)$ 
is  defined to be  the set that contains the following elements: 
  
        \begin{enumerate}
   \item[$\bullet$] 
    all  $\tilde y\in \mathcal M_F$ with  $y\in B(x, r)\setminus\mathscr B_H$ satisfying   that   
 $\tilde y$  is a  direct analytic continuation  of $\tilde x;$ 
  \item[$\bullet$]  all  $(y, H)\in \mathcal M_F$  with $y\in B(x, r)\cap \mathscr B_H.$ 
    \end{enumerate}
      
     A subset $\mathcal E\subset \mathcal M_F$ is called an open set, if  $\mathcal E$ is an 
     empty set or each  point $\tilde x\in\mathcal E$ is an inner point, i.e., there exists  a $\epsilon$-neighborhood $V(\tilde x, \epsilon)$ of $\tilde x$ such that  
       $V(\tilde x, \epsilon)\subset \mathcal E.$
    It gives   a topology of $\mathcal M_F,$
 which    contains  the topology of $\mathcal M^0_F.$
   If  $\tilde x$ is a $\lambda$-leaf mapping element,   
    then   $V(\tilde x)$ has  $\lambda$ leaves at $\tilde x.$
  We call $V(\tilde x)$  a \emph{$\lambda$-leaf neighborhood} of $\tilde x.$

  \begin{theorem} $\mathcal M_F$ is a connected  Hausdorff space. 
  \end{theorem}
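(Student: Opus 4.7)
The plan is to verify the Hausdorff axiom and path-connectivity separately, in each case reducing to the corresponding property of $\mathcal{M}^0_F$ that was already established (Theorem \ref{thm4} and Theorem \ref{thm3}), plus uniqueness of analytic continuation.

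For the Hausdorff property, I would take two distinct elements $\tilde x_1 = (U(x_1), H_1)$ and $\tilde x_2 = (U(x_2), H_2)$ of $\mathcal{M}_F$ and split into two cases. When $x_1 \neq x_2$, the $\epsilon$-neighborhoods $V(\tilde x_1, \epsilon)$ and $V(\tilde x_2, \epsilon)$ with $\epsilon < \mathrm{dist}(x_1, x_2)/2$ are disjoint, since the base points of all elements inside them lie in disjoint geodesic balls of $M$. The delicate case is $x_1 = x_2 = x$ with $\tilde x_1 \neq \tilde x_2$. Here I would argue by contradiction: if every pair $V(\tilde x_1, \epsilon) \cap V(\tilde x_2, \epsilon)$ were non-empty, then since $\mathscr B_{H_1} \cup \mathscr B_{H_2}$ is a proper analytic subset of a neighborhood of $x$, one could shrink $\epsilon$ and extract a common element $(y, G)$ lying above a non-branch point $y$ of both $H_1$ and $H_2$. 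By the definition of neighborhoods in $\mathcal M_F$, this $(y,G)$ would be a direct analytic continuation of single-valued components $H_{1,0}$ of $H_1$ and $H_{2,0}$ of $H_2$, forcing $H_{1,0} \equiv H_{2,0}$ on an open set by the uniqueness theorem of analytic mappings. But this is exactly the defining relation $\tilde x_1 \sim \tilde x_2$, contradicting $\tilde x_1 \neq \tilde x_2$.

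For connectivity, I plan to show that $\mathcal{M}_F$ is path-connected, which is stronger. Observe first that $\mathcal{M}^0_F \subset \mathcal{M}_F$ carries the induced topology, and by Theorem \ref{thm3} it is path-connected. It therefore suffices to join any element $\tilde x = (U(x), H) \in \mathcal{M}_F \setminus \mathcal{M}^0_F$ (a genuine $\lambda$-leaf element with $\lambda > 1$) to some quasi-regular element in $\mathcal{M}^0_F$ by a continuous path in $\mathcal{M}_F$. For this I would pick any point $y \in U(x) \setminus \mathscr B_H$, choose a single-valued component $H_0$ of $H$ near $y$, and set $\tilde y = (y, H_0) \in \mathcal{M}^0_F$. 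Using the local chart structure on $V(\tilde x)$ coming from the Pusieux expansion in Theorem \ref{ses}, one gets a curve in $\mathcal{M}_F$ from $\tilde x$ to $\tilde y$ as the lift (via the direct-continuation relation) of a short arc in $U(x)$ from $x$ to $y$ that meets $\mathscr B_H$ only at $x$. Combining such arcs with paths inside $\mathcal{M}^0_F$ then connects any two points of $\mathcal{M}_F$.

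I expect the main obstacle to be the Hausdorff argument at a common base point $x$, because the definition of a neighborhood $V(\tilde x)$ explicitly includes the branch-point elements $(y, H)$ with $y \in U_1(x) \cap \mathscr B_H$, and one must verify that even these cannot be shared by two inequivalent multi-leaf elements. The resolution rests on the fact that the branch set has positive complex codimension, so any neighborhood intersection must also contain quasi-regular elements, whereupon uniqueness of analytic continuation closes the argument.
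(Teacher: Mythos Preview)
Your proposal is correct. For the Hausdorff property you follow essentially the same line as the paper, which simply refers back to the argument of Theorem~\ref{thm4} and leaves the adaptation to multi-leaf elements implicit; your more careful treatment of the case $x_1=x_2$, using that the branch locus has positive codimension to locate a shared quasi-regular element and then invoking uniqueness of analytic continuation, is exactly the right elaboration.

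For connectivity the two arguments genuinely diverge. The paper argues by contradiction: a disconnection $\mathcal M_F=\mathcal A\cup\mathcal B$ into disjoint non-empty open sets would, after deleting the thin set $\mathcal S=\{\tilde x\in\mathcal M_F: x\in\mathscr B_F\}$, restrict to a disconnection of $\mathcal M_F^0$, contradicting its connectivity (the point being that $\mathcal S$ has empty interior, so neither $\mathcal A\setminus\mathcal S$ nor $\mathcal B\setminus\mathcal S$ can be empty). You instead establish path-connectivity directly, joining each multi-leaf element to $\mathcal M_F^0$ by a short arc inside a single neighborhood $V(\tilde x)$. Your route yields the slightly stronger conclusion and is more constructive; the paper's argument is quicker and sidesteps one point you pass over: the Puiseux charts of Theorem~\ref{ses} that you invoke are only available at \emph{non-singular} branch points, so the phrase ``local chart structure on $V(\tilde x)$'' does not literally apply when $x$ is a singular branch point of $H$. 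This is easily patched---choose the arc in $U(x)$ to meet $\mathscr B_H$ only at its initial point and lift it directly from the definition of the neighborhood $V(\tilde x)$---but it is worth making explicit.
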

  \begin{proof} 
 The  Hausdorff property of $\mathcal M_F$  is proved  using the similar arguments as in the proof of Theorem \ref{thm4}. We show   
   that $\mathcal M_F$ is connected.  Otherwise,   we may assume that  $\mathcal M_F=\mathcal A\cup\mathcal B,$ where $\mathcal A, \mathcal B$ 
   are two disjoint open subsets of $\mathcal M_F.$
      Set   $\mathcal S=\{\tilde x\in \mathcal M_F: x\in\mathscr B_F\}.$ It is clear  that     
          $\mathcal M^0_F=(\mathcal A\setminus \mathcal S)\cup (\mathcal B\setminus\mathcal S).$ 
                 Since $\mathscr B_F$ is an analytic set of complex codimension $1,$ 
                                  we see that 
                              $\mathcal A\setminus \mathcal S, \mathcal B\setminus\mathcal S$ are   two disjoint open subsets of $\mathcal M^0_F.$ 
                               This 
        implies that $\mathcal M^0_F$ is not connected, which is a contradiction. 
  \end{proof}

  Next, we   endow     $\mathcal M_F$ with a complex structure such that $\mathcal M_F$ is a complex manifold.  Let    $\tilde x=(U(x), H)\in \mathcal M_F$ be any $\lambda$-leaf mapping element,  in which  $x$ is not a singular branch point of $H.$ Assume that 
$$u_{i}=\frac{\zeta_i\circ\jmath\circ H}{\zeta_0\circ\jmath\circ H}, \ \ \ \    i=1,\cdots,d$$
are   $\lambda_i$-leaf function elements of $W_i$  on $U(x),$ respectively. 
        By   Theorem \ref{ses},   we have  $\lambda=\lambda_1\cdots\lambda_d.$  In further,  choosing   $U(x)$  properly if necessary,  
there  
    exists     a biholomorphic mapping
     $\varphi_x: U(x)\to\Delta^m(1)$ such that        
  every  $u_i$  can be expanded into  the convergent Pusieux series 
        \begin{eqnarray*}
       u_i(z)&=&u_i\circ\varphi^{-1}_x(z) \\
       &=& B_{i0}(\hat z_m)+B_{i\tau}(\hat z_m)z_m^{\frac{\tau_i}{\lambda_i}}+B_{i(\tau_i+1)}(\hat z_m)z_m^{\frac{\tau_i+1}{\lambda_i}}+\cdots
             \end{eqnarray*}
on $U(x)$ with      $\varphi_x(x)=\textbf{0}$ and   $\varphi_x(y)=(\hat z_m(y), 0)$
 for  all $y\in\mathscr B_H.$  
 
Let  $V(\tilde x)$  be  a $\lambda$-leaf neighborhood  of $\tilde x$ containing 
 the following elements: 
       \begin{enumerate}
   \item[$\bullet$]  all  $\tilde y\in \mathcal M_F$  with $y\in U(x)\setminus\mathscr B_H$ satisfying  that   
 $\tilde y$ is a  direct analytic continuation  of $\tilde x;$   
   \item[$\bullet$]  
    all  $(y, H)\in \mathcal M_F$ with  $y\in U(x)\cap \mathscr B_H.$
      \end{enumerate}
       Let $H_1,\cdots, H_\lambda$ denote  $\lambda$ distinct single-valued  components of $H.$ 
Define a mapping $\psi_{\tilde x}:  V(\tilde x)\to \Delta^m(1)$
 by 
      \begin{eqnarray*}
\psi_{\tilde x}(y, H)&=& \big(\hat z_m(y), 0\big),  \quad  \quad \quad \quad  \quad \quad \quad \quad \quad \quad  \quad \ \    ^\forall y\in U(x)\cap\mathscr B_H; \\
   \psi_{\tilde x}(y, H_j)&=&\Big(\hat z_m(y), \ |z_m(y)|^{\frac{1}{\lambda}}e^{i\frac{\arg z_m(y)+2(j-1)\pi}{\lambda}}\Big), 
 \quad  ^\forall y\in U(x)\setminus\mathscr B_H
     \end{eqnarray*}
for $j=1,\cdots,\lambda,$ where $z(y)=\varphi_x(y).$ We note that $\psi_{\tilde x}$  is a homeomorphism and its inverse   is  
$$
\psi^{-1}_{\tilde x}(z) = 
\begin{cases}
\big(\varphi^{-1}_x(z), H\big), & ^\forall  z_m=0; \\
\left(\varphi_x^{-1}\big(\hat z_m, |z_m|^\lambda e^{i\lambda\arg z_m}\big), H_j\right),  &  ^\forall z_m\not= 0,
\end{cases}$$
where $j$ is a positive integer such that $\big(2(j-1)\pi-\lambda\arg z_m\big)\in[0,2\pi).$
Denote by $\mathcal M_{F, \rm{sing}}$  the set of all  mapping elements $(x, H)\in\mathcal M_F$ satisfying  that $x$
 is a singular branching point  of $H,$ 
 and  by  $\mathscr B_{F, \rm sing}$  the set of all corresponding  singular branching points. 
 Note that  $\mathscr B_{F, \rm sing}$ is an analytic set of complex codimension not smaller   than $2.$
Set $$\mathscr U=\big\{(V(\tilde x), \psi_{\tilde x}): \tilde x\in\mathcal M^*_F\big\}, \ \ \ \  \mathcal M^*_F=\mathcal M_F\setminus\mathcal M_{F, \rm{sing}}.$$
 We  prove  that  $\mathscr U$  is a complex atlas  of $\mathcal M^*_F.$
Assume that   $V(\tilde x)\cap V(\tilde y)\not=\emptyset.$  It is sufficient  to verify that   the transition mapping 
$$\psi_{\tilde y}\circ\psi^{-1}_{\tilde x}: \ \psi_{\tilde x}\big(V(\tilde x)\cap V(\tilde y)\big)\to \psi_{\tilde y}\big(V(\tilde x)\cap V(\tilde y)\big)$$
is a biholomorphic mapping.  It is automatically a homeomorphism. 
Through  a simple  analysis,  we only  need to deal with  the  situation  in which   $\tilde x=(x, H)$ is a $\lambda$-leaf  quasi-regular   mapping element 
and $\tilde y=(y, G)$ is  a  regular  mapping   element.  For  any  $\tilde a\in V(\tilde x)\cap V(\tilde y),$ 
there exists a single-valued component $H_j$ of $H$ such that $H_j=G$ in some    neighborhood of $a,$ i.e., $(a, H_j)=(a, G).$ It is clear that 
$z_m\not=0,$ because  all  mapping elements in $V(\tilde y)$ are quasi-regular. 
 A direct computation leads to 
      \begin{eqnarray*}
\psi_{\tilde y}\circ\psi^{-1}_{\tilde x}(z)&=&\psi_{\tilde y}\left(\varphi_x^{-1}\big(\hat z_m, |z_m|^\lambda e^{i\lambda\arg z_m}\big), H_j\right) \\
&=&\psi_{\tilde y}\left(\varphi_x^{-1}\big(\hat z_m, |z_m|^\lambda e^{i\lambda\arg z_m}\big), G\right) \\
&=&\varphi_y\circ\varphi^{-1}_x\Big(\hat z_m, |z_m|^\lambda e^{i\lambda\arg z_m}\Big) \\
&=& \varphi_y\circ\varphi^{-1}_x\big(\hat z_m, z_m^\lambda\big),
      \end{eqnarray*}
where $j$ is a positive integer such that $\big(2(j-1)\pi-\lambda\arg z_m\big)\in[0,2\pi).$ Hence,  $\psi_{\tilde y}\circ\psi^{-1}_{\tilde x}$ is a holomorphic mapping. On the other hand, its inverse is 
      \begin{eqnarray*}
\psi_{\tilde x}\circ\psi^{-1}_{\tilde y}(z)&=&\psi_{\tilde x}\big(\varphi_y^{-1}(z), G\big) \\
&=& \psi_{\tilde x}\big(\varphi_y^{-1}(z), H_j\big) \\
&=& \varphi_x\circ\varphi^{-1}_y\Big(\hat z_m, \ |z_m|^{\frac{1}{\lambda}}e^{i\frac{\arg z_m+2(j-1)\pi}{\lambda}}\Big) \\
&=&\varphi_x\circ\varphi^{-1}_y\Big(\hat z_m,  \big(z_m^{\frac{1}{\lambda}}\big)_j\Big),
      \end{eqnarray*}
    where $\big(z_m^{1/\lambda}\big)_j$ is the $j$-th single-valued component of $z_m^{1/\lambda}.$ Evidently,   $\psi_{\tilde x}\circ\psi^{-1}_{\tilde y}$ is also a holomorphic mapping. Hence, $\mathcal M^*_F$ is a  non-compact $\nu$-leaf complex manifold of complex dimension $m.$ 
    
   We finally treat   the singular branch  set $\mathcal M_{F, \rm sing}.$  Let $\pi: \mathcal M_F\to M$ be the natural projection which sends   any  mapping  element 
    $(x, H)$ to $x.$ Note that   $\pi: \mathcal M^*_F \to M\setminus\mathscr B_{F, \rm sing}$ is a  $\nu$-sheeted ramified analytic   covering such that  
    $$M=\pi(\mathcal M^*_F)\cup\pi(\mathcal M_{F, \rm sing})=\pi(\mathcal M^*_F)\cup \mathscr B_{F, \rm{sing}}.$$
         Since  $\mathscr B_{F, {\rm sing}}$ is an analytic set of complex codimension not smaller  than $2,$  the complex structure of $M$  is  a natural extension of the complex structure of $\pi(\mathcal M^*_F)=M\setminus\mathscr B_{F, \rm sing}.$ It implies   that  
             we  can naturally extend the complex structure of $\mathcal M^*_F$   to $\mathcal M_{F, \rm sing}$ via $\pi.$ 
             
                  Therefore,    we have concluded that  
   \begin{theorem}\label{cover}
    $\mathcal M_F$ is a  $\nu$-sheeted ramified analytic covering of $M.$ We call $\mathcal M_F$ the $\nu$-leaf complex manifold by $F.$
      \end{theorem}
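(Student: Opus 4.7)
My plan is to assemble the pieces that have already been established in the construction and then carry out a codimension-two extension argument. The preceding discussion shows that $\mathcal{M}^*_F = \mathcal{M}_F \setminus \mathcal{M}_{F,\mathrm{sing}}$ carries a complex atlas $\mathscr{U}$ whose transition mappings $\psi_{\tilde y}\circ\psi_{\tilde x}^{-1}$ have been verified to be biholomorphic (the key case being a $\lambda$-leaf chart meeting a regular chart, which yields $\varphi_y\circ\varphi_x^{-1}(\hat z_m, z_m^\lambda)$). Thus $\mathcal{M}^*_F$ is already a non-compact $\nu$-leaf complex manifold of dimension $m$, and by inspection of Corollary \ref{corrr2} together with Theorem \ref{thm3}, the natural projection $\pi:\mathcal{M}^*_F\to M\setminus \mathscr{B}_{F,\mathrm{sing}}$ sending $(x,H)\mapsto x$ is proper, surjective, and generically $\nu$-to-one, with branching exactly over $\mathscr{B}_F\setminus \mathscr{B}_{F,\mathrm{sing}}$. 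In other words, over the complement of the singular branch locus the statement is already known.

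What remains is to extend the complex structure (and the covering property of $\pi$) across $\mathcal{M}_{F,\mathrm{sing}} = \pi^{-1}(\mathscr{B}_{F,\mathrm{sing}})$. The essential input is that $\mathscr{B}_{F,\mathrm{sing}}$ is an analytic subset of $M$ of complex codimension at least $2$. Fix $(x_0,H)\in \mathcal{M}_{F,\mathrm{sing}}$ and choose a small neighbourhood $U(x_0)\subset M$. On $U(x_0)\setminus \mathscr{B}_{F,\mathrm{sing}}$ the $\nu$ branches of $F$ are organised into the leaves of $\mathcal{M}^*_F$, and the local holomorphic functions that define the charts $\psi_{\tilde x}$ are uniformly bounded in $U(x_0)$ (the local boundedness argument used in the paragraph preceding Theorem \ref{ses} goes through for every mapping element, singular or not). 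Consequently, the transition maps between these charts, which are a priori holomorphic only off a codimension $\geq 2$ analytic set, extend holomorphically across that set by the Riemann extension theorem in several complex variables. This produces a uniquely determined complex atlas on all of $\mathcal{M}_F$ extending $\mathscr{U}$ and making $\mathcal{M}_F$ a (still Hausdorff, connected) complex manifold of dimension $m$.

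Once the complex structure is in place, I verify the covering property: $\pi:\mathcal{M}_F\to M$ is holomorphic since in the charts $(V(\tilde x),\psi_{\tilde x})$ of $\mathcal{M}^*_F$ the projection reads $z\mapsto \varphi_x^{-1}(\hat z_m, z_m^\lambda)$ for a $\lambda$-leaf chart, which is holomorphic, and this extends across $\mathcal{M}_{F,\mathrm{sing}}$ by continuity together with the extension argument above. The fibre $\pi^{-1}(x)$ has exactly $\nu$ points counted with multiplicity: away from $\mathscr{B}_F$ this was shown in Corollary \ref{corrr2}, and over $\mathscr{B}_F$ the sum of leaf orders of the multi-leaf elements equals $\nu$ by the decomposition $\nu_1+\cdots+\nu_l=\nu$ recorded just before Theorem \ref{ses}. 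Therefore $\pi$ is a proper surjective holomorphic map of pure degree $\nu$, branched over $\mathscr{B}_F$, which is the definition of a $\nu$-sheeted ramified analytic covering of $M$.

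The delicate step, and the one I would write out most carefully, is the holomorphic extension of the atlas across $\mathcal{M}_{F,\mathrm{sing}}$. The difficulty is that Theorem \ref{ses} provides convergent Puiseux expansions only at non-singular branch points, so near a singular branch point there is no explicit local model for the transition functions. My strategy is to avoid any explicit local normal form and instead rely on (i) the local boundedness of each $u_i = \zeta_i\circ\jmath\circ H/\zeta_0\circ\jmath\circ H$ in a neighbourhood of $x_0$, which gives uniform boundedness of the candidate chart maps, and (ii) the Riemann/Hartogs extension theorem applied to the codimension $\geq 2$ analytic set $\mathscr{B}_{F,\mathrm{sing}}$, which promotes holomorphy from the complement to the whole neighbourhood. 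This is exactly the soft argument alluded to in the paragraph that precedes the theorem statement, and spelling it out completes the proof.
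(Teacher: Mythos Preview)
Your proposal is correct and follows essentially the same route as the paper: you assemble the atlas on $\mathcal{M}^*_F$ already constructed, observe that $\pi:\mathcal{M}^*_F\to M\setminus\mathscr{B}_{F,\mathrm{sing}}$ is a $\nu$-sheeted ramified analytic covering, and then extend across the codimension-$\geq 2$ set $\mathscr{B}_{F,\mathrm{sing}}$ via a Riemann/Hartogs-type argument. The paper's treatment of this last step is in fact terser than yours (it simply asserts that the complex structure extends ``naturally via $\pi$''), so your write-up of the extension, invoking local boundedness of the $u_i$ and the second Riemann extension theorem, is a welcome elaboration rather than a departure.
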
 
   It is clear that    the above theorem implies that 
 \begin{cor}[Uniformization]\label{uniform}
 An algebroid mapping $F: M\to N$ can be lifted to a meromorphic mapping $f:  \mathcal M_F\to N$ via the natural projection $\pi: \mathcal M_F\to M$ by mapping  $(x, H)$ to $x$ 
such that $F=f\circ\pi^{-1}.$
\end{cor}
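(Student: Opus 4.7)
The plan is to define $f$ pointwise by the value formula $f(\tilde x)=H(x)$ for $\tilde x=(x,H)\in\mathcal M_F$, then verify meromorphicity sheet-by-sheet using the Puiseux data furnished by Theorem \ref{ses}, and finally extend across the singular branch locus by a codimension argument.

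First I would check that the formula $f(x,H):=H(x)$ descends to the quotient $\mathcal M_F$. Away from branch points, $H$ is single-valued and $H(x)$ is unambiguous. At a $\lambda$-leaf point $(x_0,H)$, all $\lambda$ single-valued components of $H$ tend to a common value as one approaches $x_0$ along any of the sheets: indeed, in the local coordinate $z=\varphi_x$ of Theorem \ref{ses}, each $u_i=\zeta_i\circ\jmath\circ H/\zeta_0\circ\jmath\circ H$ equals $B_{i0}(\hat z_m)+B_{i\tau_i}(\hat z_m)z_m^{\tau_i/\lambda_i}+\cdots$ which takes the single limit $B_{i0}(0)$ at $z_m=0$. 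The equivalence relation defining $\mathcal M_F$ identifies two mapping elements exactly when they share a single-valued branch, so by analytic continuation the value $H(x)$ is the same for any representative; hence $f$ is a well-defined set map $\mathcal M_F\to N$.

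Next I would establish meromorphicity on $\mathcal M^*_F=\mathcal M_F\setminus\mathcal M_{F,\mathrm{sing}}$ using the atlas $\{(V(\tilde x),\psi_{\tilde x})\}$. Fix $\tilde x=(x,H)\in\mathcal M^*_F$ of leaf number $\lambda$ and let $w=(\hat w_m,w_m)\in\Delta^m(1)$ be the chart coordinate, so that by construction $z_m=w_m^{\lambda}$ under $\varphi_x\circ\psi^{-1}_{\tilde x}$. Substituting into each Puiseux series of Theorem \ref{ses}, one sees that
\[
u_i\circ\psi^{-1}_{\tilde x}(w)=B_{i0}(\hat w_m)+B_{i\tau_i}(\hat w_m)\,w_m^{(\tau_i/\lambda_i)\cdot\lambda}+\cdots,
\]
and since $\lambda/\lambda_i\in\mathbb Z$, the right-hand side is an honest meromorphic power series in $w$. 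Thus $\jmath\circ f\circ\psi^{-1}_{\tilde x}=[1:u_1\circ\psi^{-1}_{\tilde x}:\cdots:u_d\circ\psi^{-1}_{\tilde x}]$ is a meromorphic mapping into $\mathbb P^d(\mathbb C)$ landing in $\jmath(N)$; because $\jmath$ is an embedding, $f$ itself is meromorphic on $V(\tilde x)$. Patching over the atlas, $f$ is meromorphic on $\mathcal M^*_F$.

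Finally I would extend $f$ across $\mathcal M_{F,\mathrm{sing}}$. By the last paragraph of Section 3, the singular branch set $\mathscr B_{F,\mathrm{sing}}\subset M$, and correspondingly $\mathcal M_{F,\mathrm{sing}}\subset\mathcal M_F$, is an analytic set of complex codimension at least $2$, and the complex structure of $\mathcal M^*_F$ extends naturally to $\mathcal M_F$ via $\pi$. Since $f$ is meromorphic on the complement of a codimension-$2$ analytic subset of the complex manifold $\mathcal M_F$, Levi's extension theorem for meromorphic mappings into a projective manifold yields a unique meromorphic extension $f\colon\mathcal M_F\to N$. For the identity $F=f\circ\pi^{-1}$: given $x\in M$, the fiber $\pi^{-1}(x)$ consists of all mapping elements $(x,H_1),\ldots,(x,H_{l(x)})$ of $F$ at $x$, and $f(x,H_j)=H_j(x)$ recovers precisely the set $F(x)$. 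The main technical delicacy is not the extension (handled by codimension) nor the multi-valuedness (handled by the Puiseux reparametrization $z_m=w_m^\lambda$), but rather keeping bookkeeping between the three equivalence relations on $\mathcal M^0_F$, $\mathcal M^1_F$ and $\mathcal M_F$ consistent so that the pointwise formula $f(x,H)=H(x)$ is truly independent of the representative across branch transitions; this is what the construction of the atlas $\mathscr U$ in Section 3 was designed to ensure.
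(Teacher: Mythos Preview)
Your proposal is correct and is essentially the same approach the paper has in mind: the paper states the corollary as an immediate consequence of Theorem~\ref{cover} without further argument, and what you have written simply makes explicit why the construction of $\mathcal M_F$ in Section~3 forces $f(\tilde x)=H(x)$ to be meromorphic---namely, the chart substitution $z_m=w_m^{\lambda}$ converts each Puiseux series of Theorem~\ref{ses} into an honest meromorphic series (since $\lambda/\lambda_i\in\mathbb Z$), and Levi-type extension handles the codimension-$\geq 2$ singular locus $\mathcal M_{F,\mathrm{sing}}$. Your write-up is more detailed than the paper's, but the route is the one the paper intends.
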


\section{Estimate of  Branch Divisors}
   
   Let $W_i=\{W_{ij}\}_{j=1}^{\nu_i}$ be the  $\nu_i$-valued algebroid function on $M$ defined by Eq. $(\ref{eq})$ for    $i=1,\cdots,d.$ 
   
   We need the following lemma: 
         \begin{lemma}\label{pole} For $i=1,\cdots,d,$ we have 
   $$\left(W_i=\infty\right)=(W_{i1}=\infty)+\cdots+(W_{i\nu_i}=\infty)=(A_{i\nu_i}=0).$$
   \end{lemma}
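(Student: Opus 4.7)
\emph{Proof plan.} The first equality is by definition: for the algebroid function $W_i$, the pole divisor $(W_i=\infty)$ is the sum of the pole divisors of its single-valued branches $W_{i1},\cdots,W_{i\nu_i}$ (equivalently, the push-forward via $\pi:\mathcal M_F\to M$ of the pole divisor of the single-valued meromorphic lift given by Corollary \ref{uniform}). The plan is to establish the second equality by first matching the supports of the two divisors and then comparing multiplicities along each prime component.

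For the support inclusion, I would argue that wherever $A_{i\nu_i}(x_0)\ne 0$, dividing $\Psi_i$ by $A_{i\nu_i}$ yields a monic polynomial in $W_i$ with locally holomorphic coefficients; the Rouch\'e argument used in the proof of Theorem \ref{thm1} (with the Weierstrass preparation theorem handling coalescing roots) then shows every branch of $W_i$ is locally bounded near $x_0$, and in particular pole-free. Conversely, at a zero $x_0$ of $A_{i\nu_i}$ the reciprocal substitution $V_i=1/W_i$ yields
$$\Phi_i(x,V_i)=A_{i0}(x)V_i^{\nu_i}+A_{i1}(x)V_i^{\nu_i-1}+\cdots+A_{i\nu_i}(x),$$
and $\Phi_i(x_0,0)=A_{i\nu_i}(x_0)=0$ forces a branch of $V_i$ to vanish at $x_0$, so some branch of $W_i$ has a pole there; this is essentially the calculation already sketched in the paragraph preceding the lemma.

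To upgrade the equality of supports to an equality of divisors, I would fix an irreducible component $C$ of $\{A_{i\nu_i}=0\}$, pick a generic smooth point $x_0\in C$, and choose a local coordinate $t$ transverse to $C$. Setting $v_j:=\mathrm{ord}_t A_{ij}$, so that $v_{\nu_i}$ equals the multiplicity of $C$ in $(A_{i\nu_i}=0)$, Newton polygon theory applied to $\Psi_i$ in $W_i$ with respect to the $t$-adic valuation determines the $t$-valuations of the roots of $\Psi_i(x_0,\cdot)$: they are the negatives of the slopes of the lower convex hull of the points $\{(j,v_j)\}_{j=0}^{\nu_i}$. The total pole order of $W_i$ along $C$ is the sum of the positive slopes weighted by their horizontal lengths, which collapses to $v_{\nu_i}-\min_{0\le j\le\nu_i-1}v_j$.

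The main obstacle, and the place where the standing hypotheses on the $A_{ij}$ are essential, is ensuring $\min_{0\le j\le\nu_i-1}v_j=0$ along every such $C$. Here the assumption that $[A_{i0}:\cdots:A_{i\nu_i}]$ well-defines a meromorphic mapping into $\mathbb P^{\nu_i}(\mathbb C)$, i.e., that the $A_{ij}$ have no common divisor of codimension one, is exactly what is needed: since $v_{\nu_i}\ge 1$ on $C$, some $A_{ij}$ with $j<\nu_i$ must be non-vanishing along $C$, forcing $\min_j v_j=0$. Consequently the pole multiplicity of $W_i$ along $C$ is exactly $v_{\nu_i}$, matching the multiplicity of $C$ in $(A_{i\nu_i}=0)$; summing over all irreducible components of $\{A_{i\nu_i}=0\}$ yields the claimed divisor identity.
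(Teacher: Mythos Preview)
Your proposal is correct, and both the support argument and the Newton--polygon multiplicity computation go through as you describe. The paper's proof, however, takes a more elementary route that avoids Newton polygons entirely. It works pointwise at a zero $x_0$ of $A_{i\nu_i}$ (outside the codimension-$2$ indeterminacy locus of $[A_{i0}:\cdots:A_{i\nu_i}]$) and invokes Vieta's formulas: if exactly $p$ branches $W_{i1},\dots,W_{ip}$ are polar at $x_0$, then in the elementary symmetric sum $e_p=\pm A_{i(\nu_i-p)}/A_{i\nu_i}$ the product $W_{i1}\cdots W_{ip}$ is the unique term of maximal pole order, so the pole order of $e_p$ equals the total pole order $\sum_{j\le p} n(x_0,W_{ij}=\infty)$. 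Since no $e_k$ can have a higher pole order, and the maximum over $k$ of the pole orders of $A_{ik}/A_{i\nu_i}$ is exactly $n(x_0,A_{i\nu_i}=0)$ (because $x_0$ is outside the indeterminacy locus, so some $A_{ij}(x_0)\neq 0$), the identity follows. Your Newton--polygon approach is more systematic and makes the role of the valuations transparent---the telescoping to $v_{\nu_i}-\min_j v_j$ is exactly the mechanism behind the paper's identification of $e_p$ as the dominant symmetric function---but the paper's Vieta argument is shorter and needs no external machinery. Both hinge on the same structural hypothesis (no common codimension-one factor among the $A_{ij}$), which you correctly isolate.
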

   \begin{proof} Let $x_0$ be an arbitrary  zero of $A_{i\nu_i}.$
   Since  the indeterminacy locus $I_{\mathscr A_i}$ of 
     $$\mathscr A_{i}=[A_{i0}: \cdots: A_{i\nu_i}]: \ M\rightarrow\mathbb P^{\nu_i}(\mathbb C)$$
 is either  empty  or  of complex codimension 2,  
       we obtain    $I_{\mathscr A_i}=0$ in the sense of divisors. 
   Thus,  we  may assume that $x_0\not\in I_{\mathscr A_i}.$  It follows  that    
   \begin{equation}\label{ppt} 
   n\big(x_0, A_{i\nu_i}=0\big)=\max_{0\leq j\leq\nu_i-1}n\Big(x_0, \frac{A_{ij}}{A_{i\nu_i}}=\infty\Big),
   \end{equation}
where $n(x_0, g=a)$ denotes the number of  $a$-valued points of  a function $g$ at $x_0.$ 
Without loss of generality,   we  assume  that  $W_{i1}, \cdots, W_{ip}$ are all  those components  that  take $\infty$ at $x_0,$ 
According to  Vieta's theorem,  we obtain    
  \begin{eqnarray*}
\sum_{1\leq j\leq \nu_i}W_{ij} &=&-\frac{A_{i(\nu_i-1)}}{A_{\nu_i}} \\
 \cdots\cdots\cdots && \\
\sum_{1\leq j_1<\cdots<j_p\leq \nu_i}W_{ij_1}\cdots W_{ij_p}&=& (-1)^p\frac{A_{i(\nu_i-p)}}{A_{i\nu_i}} \\
 \cdots\cdots\cdots &&  \\
W_{i1}\cdots W_{i\nu_i}&=& (-1)^{\nu_i}\frac{A_{i0}}{A_{i\nu_i}} 
  \end{eqnarray*}
    which implies that   $A_{i(\nu_i-p)}(x_0)\not=0,$ because  $w_{i1}\cdots w_{ip}$  reaches   the maximal  pole order  at  $x_0\not\in I_{\mathscr A_i}.$
         It yields from (\ref{ppt}) that   
        \begin{eqnarray*}
  &&  n\big(x_0, W_i=\infty\big) \\
  &=&n(x_0, W_{i1}=\infty)+\cdots+n(x_0, W_{ip}=\infty) \\
       &=& n(x_0, W_{i1}\cdots W_{ip}=\infty) \\
       &=& n\Big(x_0, \frac{A_{i(\nu_i-p)}}{A_{\nu_i}}=\infty\Big) \\
       &=& n\big(x_0, A_{i\nu_i}=0\big), 
      \end{eqnarray*}
which  leads to  that 
   $(W_i=\infty)=(A_{i\nu_i}=0).$  
\end{proof}

Setting  $V_i=1/W_i$  and using  the  similar arguments, we can also show  the following  corollary. 

   \begin{cor}\label{zero}  For $i=1,\cdots,d,$ we have 
   $$\left(W_i=0\right)=(W_{i1}=0)+\cdots+(W_{i\nu_i}=0)=(A_{i0}=0).$$
   \end{cor}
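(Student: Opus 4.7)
The plan is to reduce Corollary \ref{zero} to Lemma \ref{pole} via the substitution $V_i=1/W_i$, exactly as the hint before the statement suggests. The key observation is that zeros of $W_i$ correspond bijectively (with multiplicities) to poles of $V_i$, and $V_i$ itself is an algebroid function whose defining equation has an explicitly known leading coefficient.

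First I would substitute $V_i=1/W_i$ into the defining equation
$$A_{i\nu_i}(x)W_i^{\nu_i}+A_{i(\nu_i-1)}(x)W_i^{\nu_i-1}+\cdots+A_{i0}(x)=0$$
and clear denominators by multiplying through by $V_i^{\nu_i}$, obtaining
$$A_{i0}(x)V_i^{\nu_i}+A_{i1}(x)V_i^{\nu_i-1}+\cdots+A_{i\nu_i}(x)=0.$$
Thus $V_i$ is a $\nu_i$-valued algebroid function whose coefficient of $V_i^{\nu_i}$ is $A_{i0}(x)$ (playing the role that $A_{i\nu_i}$ played for $W_i$), and whose constant term is $A_{i\nu_i}(x)$. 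Note that the single-valued components of $V_i$ are precisely $\{1/W_{ij}\}_{j=1}^{\nu_i}$, so $(V_{ij}=\infty)=(W_{ij}=0)$ as divisors, and similarly $(V_i=\infty)=(W_i=0)$.

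Next I would apply Lemma \ref{pole} directly to the algebroid function $V_i$ with its defining equation above. That lemma yields
$$(V_i=\infty)=(V_{i1}=\infty)+\cdots+(V_{i\nu_i}=\infty)=(A_{i0}=0).$$
Translating back via $V_{ij}=1/W_{ij}$ and $V_i=1/W_i$ gives exactly the desired identity
$$(W_i=0)=(W_{i1}=0)+\cdots+(W_{i\nu_i}=0)=(A_{i0}=0).$$

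The only minor subtlety I would check is the indeterminacy locus: Lemma \ref{pole} was proved away from the indeterminacy locus $I_{\mathscr A_i}$ of $[A_{i0}:\cdots:A_{i\nu_i}]:M\to\mathbb P^{\nu_i}(\mathbb C)$, which is either empty or of complex codimension at least $2$ and hence trivial as a divisor. Since applying the lemma to $V_i$ uses the same projectivized coefficient tuple (just reordered as $[A_{i\nu_i}:\cdots:A_{i0}]$), the indeterminacy locus is unchanged, and the divisor identity extends across it for the same reason. This is the only point requiring a sentence of care; no new computation of Vieta-type relations is needed, because the entire argument is imported from Lemma \ref{pole} via the reciprocal substitution.
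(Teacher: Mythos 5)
Your proposal is correct and matches the paper's intended argument, which simply says ``setting $V_i=1/W_i$ and using the similar arguments'': the reciprocal substitution turns the defining equation into one with leading coefficient $A_{i0}$, and poles of $V_i$ are zeros of $W_i$. The only cosmetic difference is that you invoke Lemma \ref{pole} directly for $V_i$ instead of re-running its Vieta-type computation, which is a perfectly valid (and slightly cleaner) way of carrying out the same reduction.
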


For any  $x\in \mathscr B_F,$   we denote by   $(x, H_1), \cdots, (x, H_l)$   the all  $l=l(x)$ distinct    
mapping elements of $F$  at $x$ with $\lambda_1,\cdots,\lambda_l$ leaves,  respectively.
Recall     that $\lambda_1+\cdots+\lambda_l=\nu$ and $x$ is a $(\nu-l)$-order branching point of $F.$ 
The \emph{branching divisor}  of $F$  is  a  divisor  $\mathscr D_F$ on $M$  such  that    
 $${\rm{Supp}}\mathscr D_F=\mathscr B_F, \ \ \ \   {\rm{Ord}}_{x}\mathscr D_F=\nu-l(x), \ \ \  \  ^\forall x\in\mathscr B_F.$$  
\ \ \ \   Let $(x_0, H)$ be any $\lambda$-leaf mapping element of $F$ at $x_0.$
One may assume without loss  of generality  that    $\zeta_0\circ\jmath\circ H\not\equiv0.$ 
Assume that 
$$u_i=\frac{\zeta_i\circ\jmath\circ H}{\zeta_0\circ\jmath\circ H}, \ \ \ \   i=1,\cdots,d$$
are $\lambda_i$-valued components of $W_i$ at $x_0,$ respectively, i.e., $(x_0, u_i)$ are $\mu_i$-leaf function elements of 
$W_i$ at $x_0,$ respectively. Note that 
\begin{equation}\label{opper}
\mu_1\cdots\mu_d=\lambda.
\end{equation}
Write   $u_i=\{u_{ij}\}_{j=1}^{\mu_i}$ for $i=1,\cdots,d.$ Then, there are $\mu_i$ distinct single-valued components of $W_i,$ saying  
without loss of generality that $W_{i1}, \cdots, W_{i\mu_i},$ such that $u_{ij}=W_{ij}$ for  $j=1,\cdots,\mu_i$ in some neighborhood of $x_0.$
  Let  $F_1,\cdots, F_\nu$ be the  $\nu$ distinct single-valued  components of $F.$ Then 
$$\jmath\circ F_i:=\big[f_{i0}:\cdots:f_{id}\big], \ \ \ \  i=1,\cdots,\nu$$
 are  $\nu$ distinct single-valued  components of $\jmath\circ F.$
 For  $i=1,\cdots,\nu,$ put 
$$g_{ij}=\frac{f_{ij}}{f_{i0}},  \ \ \ \   j=1,\cdots, d.$$
It is clear  that  each $g_{ik}$ is a single-valued component of $W_k$ with $k=1,\cdots,d.$ By (\ref{opper}), it is not hard to deduce that  
\begin{equation}\label{keyfor}
\Big\{\underbrace{W_{i1},\cdots, W_{i1}}_{\lambda/\mu_i}; \cdots; \underbrace{W_{id},\cdots, W_{id}}_{\lambda/\mu_i}\Big\}\subset\big\{g_{1k},\cdots, g_{\nu k}\big\}.
\end{equation}
That is, there are  $\lambda/\mu_i$ copies of $W_{ij}$ among  $g_{1k},\cdots, g_{\nu k}$ for $j=1,\cdots,\mu_i.$ 

Set 
\begin{equation}\label{JJ}
J_k=\big(f_{10}\cdots f_{\nu0}\big)^{2\nu-2}\prod_{1\leq i<j\leq\nu}\big(g_{ik}-g_{jk}\big)^2, \ \ \ \     k=1,\cdots,d.
\end{equation}

 \begin{theorem}\label{esti}  For $k=1,\cdots, d,$ we have  
$$\mathscr D_F\leq \big(J_k=0\big).$$
 \end{theorem}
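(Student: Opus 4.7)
The plan is to verify $\mathscr D_F \le (J_k=0)$ pointwise along each codimension-one irreducible component of $\mathscr B_F$. Since $\mathrm{Supp}\,\mathscr D_F = \mathscr B_F$ and the singular locus of $\mathscr B_F$ has complex codimension at least two, it suffices to fix a smooth point $x_0 \in \mathscr B_F$ and show $\mathrm{ord}_{z_m} J_k \ge \nu - l(x_0) = \mathrm{Ord}_{x_0}\mathscr D_F$, where $\{z_m=0\}$ is a local equation for the component of $\mathscr B_F$ through $x_0$. If $J_k \equiv 0$ the claim is vacuous, so assume $J_k \not\equiv 0$. Then the $\nu$ functions $g_{1k},\ldots,g_{\nu k}$ are pairwise distinct as global single-valued components of the $\nu_k$-valued algebroid function $W_k$, forcing $\nu \le \nu_k$; combined with $\nu = \nu_1\cdots\nu_d \ge \nu_k$ this yields $\nu_{k'}=1$ for $k' \neq k$. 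Writing $(x_0,H_1),\ldots,(x_0,H_l)$ for the multi-leaf elements of $F$ at $x_0$ with leaf counts $\lambda_1,\ldots,\lambda_l$ (so $\sum_s \lambda_s = \nu$) and $\mu_{s,k}$ for the leaf number of the $k$-th coordinate $u_{s,k}$ of $H_s$, the single-valuedness of $W_{k'}$ for $k' \neq k$ gives $\mu_{s,k'}=1$, and relation (\ref{opper}) then forces $\mu_{s,k} = \lambda_s$ for every $s$.

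By Theorem \ref{ses}, in suitable local coordinates $z = (\hat z_m, z_m)$ the $\lambda_s$ single-valued branches of $u_{s,k}$ admit the Pusieux form
$$u_{s,k,\alpha}(z) = B_{s,0}(\hat z_m) + \sum_{n\ge \tau_s} B_{s,n}(\hat z_m)\,\zeta_s^{(\alpha-1)n}\,z_m^{n/\lambda_s}, \qquad \alpha = 1,\ldots,\lambda_s,$$
where $\zeta_s$ is a primitive $\lambda_s$-th root of unity, $\tau_s \ge 1$ and $B_{s,\tau_s}\not\equiv 0$. For distinct branches $\alpha\ne\beta$ the constant terms cancel, so generically along $\{z_m=0\}$
$$\mathrm{ord}_{z_m}\bigl(u_{s,k,\alpha} - u_{s,k,\beta}\bigr) \ge \frac{\tau_s}{\lambda_s} \ge \frac{1}{\lambda_s}.$$
Since $g_{ik}, g_{jk}$ correspond to distinct single-valued components globally (as $J_k\not\equiv 0$), this estimate applies to every pair $i\ne j$ with both indices in the same $H_s$.

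Now sum the pair contributions to $\mathrm{ord}_{z_m} J_k$: the $\binom{\lambda_s}{2}$ intra-$H_s$ pairs contribute at least
$$2\binom{\lambda_s}{2}\cdot\frac{1}{\lambda_s} = \lambda_s - 1,$$
while inter-element pairs and the factor $(f_{10}\cdots f_{\nu 0})^{2\nu-2}$ contribute non-negatively. Summing over $s = 1,\ldots,l$ yields
$$\mathrm{ord}_{z_m} J_k \ge \sum_{s=1}^l(\lambda_s - 1) = \nu - l = \mathrm{Ord}_{x_0}\mathscr D_F,$$
which establishes the divisor inequality. The main obstacle is the Pusieux book-keeping, in particular reducing to smooth points of $\mathscr B_F$ and translating fractional $z_m$-orders into an integer divisor estimate on $M$ (cleanest via the ramified cover $\pi\colon \mathcal M_F \to M$ from Theorem \ref{cover}); after that the estimate reduces to the elementary summation above.
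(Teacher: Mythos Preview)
Your approach is essentially the same as the paper's (localize at a smooth branch point, use the Puiseux expansions of Theorem~\ref{ses}, and count intra-element pair contributions), and your preliminary observation that $J_k\not\equiv0$ forces $\nu_{k'}=1$ for $k'\ne k$, hence $\mu_{s,k}=\lambda_s$, is correct and a genuine simplification over the paper's bookkeeping with the multiplicity relation~(\ref{keyfor}).

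However, there is a real gap: you do not treat the case where $x_0$ is a \emph{pole} of $u_{s,k}$. Your Puiseux form with $\tau_s\ge 1$ and a finite constant term $B_{s,0}$ comes from the non-pole branch of Theorem~\ref{ses}; when $x_0$ is a pole (i.e.\ lies on $A_{k\nu_k}^{-1}(0)$), the expansion involves \emph{negative} powers of $z_m$, and then for $\alpha\ne\beta$ one has $\mathrm{ord}_{z_m}(u_{s,k,\alpha}-u_{s,k,\beta})<0$, not $\ge 1/\lambda_s$. Concretely, for $W=z_m^{-1/2}$ on $\mathbb C^m$ the two branches satisfy $g_{1k}-g_{2k}=2z_m^{-1/2}$, so the intra-element squared difference has order $-1$, not $+1$; the compensating $+2$ comes entirely from the factor $(f_{10}f_{20})^{2}$. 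Likewise your claim that ``inter-element pairs \dots\ contribute non-negatively'' fails once any branch has a pole. Since a component of $\mathscr B_F$ can coincide with a component of the pole divisor (as in this example), you cannot avoid this case by genericity of $x_0$.

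The paper handles this by splitting off the pole case and invoking Lemma~\ref{pole} to show that the zero order of $f_{10}\cdots f_{\nu0}$ absorbs the pole order of the $g_{ik}$'s, so that the prefactor $(f_{10}\cdots f_{\nu0})^{2\nu-2}$ supplies the missing positivity. To fix your argument, you need either this step or the equivalent rewriting $J_k=\prod_{i<j}(f_{ik}f_{j0}-f_{jk}f_{i0})^2$, after which the order computation must be redone in terms of the holomorphic numerators rather than the possibly polar $g_{ik}$'s.
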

 
\begin{proof}   For  any $x_0\in\mathscr B_F,$ we denote by    $(x_0, H_1),\cdots, (x_0,  H_{l})$    the all  distinct  mapping  elements  of $F$ at $x_0,$ with 
$\lambda_1, \cdots,  \lambda_{l}$ leaves,  respectively. 
Note that   $\lambda_1+\cdots+\lambda_{l}=\nu$ as well as     
\begin{equation}\label{hayy}
{\rm{Ord}}_{x_0}\mathscr D=\sum_{s=1}^l(\lambda_s-1)=\nu-l.
\end{equation}
Assume that  for $s=1,\cdots,l$   
$$u^{s}_{i}=\frac{\zeta_i\circ\jmath\circ H_s}{\zeta_0\circ\jmath\circ H_s}, \ \ \ \   i=1,\cdots,d$$
are $\mu^s_{i}$-valued components of $W_i$ at $x_0,$ respectively, i.e., $(x_0, u^s_i)$ are $\mu^s_{i}$-leaf function elements of 
$W_i$ at $x_0,$ respectively. Note that 
\begin{equation*}
\mu^s_{1}\cdots\mu^s_{d}=\lambda_s, \ \ \ \ s=1,\cdots,l.
\end{equation*}
We write   $u^s_i=\{u^s_{ij}\}_{j=1}^{\mu^s_i}$ for all $s, i.$ Then, there exist  $\mu^s_i$ distinct single-valued components of $W_i,$ saying  
without loss of generality that $W_{i1}, \cdots, W_{i\mu^s_i},$ such that $u^s_{ij}=W_{ij}$ for  $j=1,\cdots,\mu^s_i$ in some neighborhood of $x_0.$
Since the set of all singular branch points of $F$ has complex codimension  not smaller  than $2,$ 
  it is sufficient  to deal with     the case  where   $x_0$ is a  non-singular branch point of $F.$  
By  Theorem \ref{ses}, 
we  may write    for all $s, i$
\begin{equation}\label{termss}
   u^s_i(z)=B^s_{i0}(\hat z_m)+B^s_{i\tau^s_i}(\hat z_m)z_m^{\frac{\tau^s_i}{\mu^s_i}}+B^s_{i(\tau^s_i+1)}(\hat z_m)z_m^{\frac{\tau^s_i+1}{\mu^s_i}}+\cdots
   \end{equation}
 in a local holomorphic coordinate $z=(\hat z_m, z_m)$ around  $x_0,$  such that $x_0$ has a local holomorphic coordinate $z=\textbf{0}.$  

Take any $s\in\{1,\cdots,l\}.$ If   $x_0$ is not a pole of $u^s_i,$  then we have $B^s_{i0}(\textbf{0})\not=\infty.$   Using     (\ref{keyfor}) and  (\ref{termss}),   we deduce  that 
there exist  at least 
$\lambda_s(\lambda_s-1)/2$ terms  which   contain    the factor $z_m^{2/\mu^s_i}$   in 
$J_k.$ Since $\mu^s_i|\lambda_s,$  there exist   at least $\lambda_s-1$ terms  which  contain  the factor $z_m$ in  $J_k.$
If    $x_0$ is  a pole of $u^s_i,$ then it is  also  a  pole of $W_{i1}\cdots W_{i \mu^s_i}.$
 According to  Lemma \ref{pole} and (\ref{keyfor}), we are led to   
$${\rm{Ord}}_{x_0}\big(u^s_i=\infty\big)\leq {\rm{Ord}}_{x_0}\big(g_{1k}\cdots g_{\nu k}=\infty\big)
= {\rm{Ord}}_{x_0}\big(f_{10}\cdots f_{\nu0}=0\big).$$
Combining the  both cases and applying  (\ref{hayy}), we conclude that  
$${\rm{Ord}}_{x_0}\mathscr D_F\leq {\rm{Ord}}_{x_0}\big(J_k=0\big).$$ 
This completes the proof. 
\end{proof}

\section{Equidistribution of Meromorphic Mappings}

Let $(M, g)$ be an $m$-dimensional complete non-compact K\"ahler manifold  with
 Laplace-Beltrami operator $\Delta$ and 
 K\"ahler form
$$\alpha=\frac{\sqrt{-1}}{\pi}\sum_{i,j=1}^mg_{i\bar j}dz_i\wedge d\bar z_j.$$
\ \ \ \   Given  a family $\{\Delta(r)\}_{r>0}$ of   precompact domains $\Delta(r)\subset M$ with   smooth boundaries $\partial\Delta(r)$ which  can exhaust  $M.$
       Fix a reference  point $o\in \Delta(r).$ We use  $g_{r}(o, x)$   to stand for  
              the  Green function of $\Delta/2$ for $\Delta(r)$ with a pole at $o$ satisfying Dirichlet boundary condition, i.e., 
                $$-\frac{1}{2}\Delta g_{r}(o, x)=\delta_o(x), \ \ \ \  ^\forall x\in\Delta(r); \ \ \ \  \  g_{r}(o, x)=0, \ \ \ \ ^\forall x\in\partial\Delta(r),$$
  where $\delta_o$ is the Dirac's delta function  on $M$ with a pole at $o.$  Note that the harmonic measure $\pi_{r}$ on $\partial\Delta(r)$ with respect to $o$ is defined by 
$$d\pi_{r}(x)=\frac{1}{2}\frac{\partial  g_{r}(o,x)}{\partial \vec \nu}d\sigma_{r}(x),$$
where $d\sigma_{r}$ is the induced Riemannian volume element of $\partial\Delta(r)$ and $\partial/\partial \vec{\nu}$ is the inward normal derivative on $\partial\Delta(r).$ 
Moreover,  let  $(N, \omega)$ be a compact K\"ahler manifold of complex dimension $n.$

\subsection{First Main Theorem}~
  
We introduce   Nevanlinna's functions.   Let $f: M\to N$ be a meromorphic mapping. 
The characteristic function of $f$ with respect to $\omega$ is defined by
  \begin{eqnarray*}
  T_f(r, \omega)&=& \frac{\pi^m}{(m-1)!}\int_{\Delta(r)}g_{r}(o,x)f^*\omega\wedge\alpha^{m-1} 
    \end{eqnarray*}
Since $N$ is compact,   for another K\"ahler metric $\omega'$ on $N,$ we have
$$T_f(r,\omega')=T_f(r,\omega)+O(1).$$ 
\ \ \ \   Let $D$ be an effective  divisor   cohomologous to $\omega,$ by which we mean that there exists  a  function $u_D\geq0$ on $N$ such that 
\begin{equation}\label{888}
\omega-[D]=2dd^cu_D
\end{equation}
in the sense of currents, here 
$d=\partial+\overline{\partial}$ and $d^c=(\sqrt{-1}/4\pi)(\overline{\partial}-\partial).$ 
Thus, $-u_D$ is quasi-plurisubharmonic, i.e., $dd^c(-u_D)\geq-\omega/2.$
  The proximity  function of $f$   with respect to $D$ is defined by
$$  m_f(r, D)=\int_{\partial\Delta(r)} u_D\circ fd\pi_{r}.$$
Define  the counting  function of $f$ with respect to $D$ by
 \begin{eqnarray*}
  N_f(r, D)&=&\frac{\pi^m}{(m-1)!}\int_{f^*D\cap\Delta(r)}g_{r}(o,x)\alpha^{m-1}. 
    \end{eqnarray*}
    Moreover, the simple counting  function of $f$ with respect to $D$ is defined by
     \begin{eqnarray*}
  \overline{N}_f(r, D)&=&\frac{\pi^m}{(m-1)!}\int_{f^{-1}(D)\cap\Delta(r)}g_{r}(o,x)\alpha^{m-1}. 
    \end{eqnarray*}

    \begin{lemma}[Green-Dynkin Formula, \cite{at2018a, Dong, DY}]\label{dynkin} Let $u$ be a $\mathscr C^2$-class function on  $M$ outside a polar set of singularities at most. Assume that $u(o)\not=\infty.$  Then
$$\int_{\partial \Delta(r)}u(x)d\pi_{r}(x)-u(o)=\frac{1}{2}\int_{\Delta(r)}g_r(o,x)\Delta u(x)dv(x).$$
\end{lemma}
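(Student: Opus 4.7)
The statement is a classical Green-type identity (often called the Dynkin formula in the probabilistic literature), so the plan is to deduce it from the defining properties of $g_r(o,\cdot)$ and the harmonic measure $\pi_r$ via integration by parts, and then to handle the polar singularities of $u$ by a standard exhaustion argument.

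First, assume temporarily that $u$ is of class $\mathscr{C}^2$ throughout $\overline{\Delta(r)}$. I would apply the symmetric form of Green's identity to the pair $(u, g_r(o,\cdot))$ on $\Delta(r)$, namely
\begin{equation*}
\int_{\Delta(r)}\bigl(g_r(o,x)\Delta u(x)-u(x)\Delta g_r(o,x)\bigr)\,dv(x)
= -\int_{\partial\Delta(r)}\left(g_r(o,x)\frac{\partial u}{\partial\vec\nu}-u(x)\frac{\partial g_r(o,x)}{\partial\vec\nu}\right)d\sigma_r(x),
\end{equation*}
where the sign reflects that $\vec\nu$ is the \emph{inward} normal. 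Since $g_r(o,\cdot)$ vanishes on $\partial\Delta(r)$, the first boundary integral drops out. The distributional equation $-\frac{1}{2}\Delta g_r(o,\cdot)=\delta_o$ turns the second volume integral into $2u(o)$, and the definition $d\pi_r=\frac{1}{2}(\partial g_r/\partial\vec\nu)\,d\sigma_r$ converts the remaining boundary integral into $2\int_{\partial\Delta(r)}u\,d\pi_r$. Dividing by $2$ gives the desired identity.

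The main obstacle, as expected, is justifying Green's identity when $u$ has a polar singular set $\Sigma\subset\Delta(r)$. My plan is to exhaust $\Delta(r)\setminus\Sigma$ by open sets $\Omega_\varepsilon$ obtained by removing $\varepsilon$-neighborhoods of $\Sigma$, apply the smooth version above on $\Omega_\varepsilon$, and let $\varepsilon\downarrow 0$. Two auxiliary boundary terms arise on $\partial\Omega_\varepsilon\setminus\partial\Delta(r)$: one against $g_r(o,\cdot)\,\partial u/\partial\vec\nu$ and one against $u\,\partial g_r(o,\cdot)/\partial\vec\nu$. Because $\Sigma$ is polar (hence of capacity zero and of real codimension at least $2$), the first tends to $0$ using that $g_r$ is locally bounded near $\Sigma$ and the $(2m-1)$-measure of $\partial\Omega_\varepsilon\setminus\partial\Delta(r)$ shrinks fast enough, while the second is estimated by invoking the fact that $u$ is locally integrable with respect to the harmonic measures of shrinking tubes about a polar set (equivalently, that such sets are removable for bounded harmonic functions). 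The interior integral on the right behaves well since $g_r(o,\cdot)\Delta u$ is locally integrable on $\Delta(r)$ under the standing hypothesis $u(o)\neq\infty$, which keeps $o$ outside $\Sigma$ and makes the left-hand side at $o$ meaningful.

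In summary, the proof is: (i) Green's identity on $\Delta(r)$, (ii) kill one boundary term via the Dirichlet condition on $g_r$, (iii) use $-\tfrac12\Delta g_r(o,\cdot)=\delta_o$ to pick up the value $u(o)$, (iv) recognize the remaining boundary integral as $2\int u\,d\pi_r$, and (v) pass through an $\varepsilon$-tube exhaustion to accommodate the polar singularities. Steps (i)--(iv) are a short computation; step (v) is the only place where care is required, and standard potential-theoretic estimates for polar sets make it routine.
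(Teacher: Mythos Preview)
The paper does not prove this lemma; it simply cites it from \cite{at2018a, Dong, DY} and uses it as a black box. Your Green's-identity derivation is the standard route and is correct in outline: integrate by parts against $g_r(o,\cdot)$, use the Dirichlet condition to drop one boundary term, invoke $-\tfrac12\Delta g_r(o,\cdot)=\delta_o$ to extract $u(o)$, and identify the surviving boundary integral with $2\int u\,d\pi_r$. One small point you glossed over: since $g_r(o,\cdot)$ itself is singular at $o$, the clean way to carry out step~(iii) is to excise a small geodesic ball $B(o,\epsilon)$ as well, apply Green's identity on $\Delta(r)\setminus B(o,\epsilon)$, and use the known local asymptotics of $g_r$ near $o$ (the same $\rho(x)^{2-2m}$ behavior as in Lemma~\ref{sing}) to show that the boundary contribution on $\partial B(o,\epsilon)$ converges to $2u(o)$ as $\epsilon\to 0$; phrasing it purely through the distributional equation hides this computation. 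The polar-set exhaustion in step~(v) is handled in the cited references exactly along the lines you describe.
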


We have the first main theorem as follows. 
\begin{theorem}[First Main Theorem]\label{first}  Assume that $f(o)\not\in{\rm Supp}D.$ Then  
$$T_f(r, \omega)+u_D\circ f(o)=m_f(r, D)+N_f(r, D).$$
\end{theorem}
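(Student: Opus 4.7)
The plan is to reduce the identity to one application of the Jensen-Dynkin formula (Lemma \ref{dynkin}) applied to the composition $\phi := u_D\circ f$, combined with the pulled-back cohomological relation coming from $(\ref{888})$. The hypothesis $f(o)\notin\mathrm{Supp}\,D$ guarantees $\phi(o)\neq\infty$, and $\phi$ is $\mathscr C^\infty$ on $M\setminus f^{-1}(D)$ with at worst logarithmic singularities along $f^{-1}(D)$; this is a polar set in $M$, so the hypotheses of Lemma \ref{dynkin} are met.

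First I would pull back the current equation $\omega-[D]=dd^cu_D$ by $f$ to obtain
\begin{equation*}
f^*\omega-f^*[D]=dd^c\phi
\end{equation*}
as currents on $M$. Next I would apply Lemma \ref{dynkin} to $\phi$: since $\phi$ is $\mathscr C^2$ outside the polar set $f^{-1}(D)$ and $\phi(o)$ is finite,
\begin{equation*}
\int_{\partial\Delta(r)}\phi\,d\pi_r-\phi(o)=\frac{1}{2}\int_{\Delta(r)}g_r(o,x)\,\Delta\phi(x)\,dv(x).
\end{equation*}
By definition the left-hand side equals $m_f(r,D)-u_D\circ f(o)$.

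It remains to identify the right-hand side with $T_f(r,\omega)-N_f(r,D)$. The key step is the standard K\"ahler identity that, for a $\mathscr C^2$ function $u$ on $M$,
\begin{equation*}
\frac{1}{2}\Delta u\cdot dv=\frac{\pi^m}{(m-1)!}\,dd^cu\wedge\alpha^{m-1},
\end{equation*}
interpreted distributionally for $u=\phi$. Substituting $dd^c\phi=f^*\omega-f^*[D]$ and integrating $g_r(o,\cdot)$ against each piece yields
\begin{equation*}
\frac{1}{2}\int_{\Delta(r)}g_r(o,x)\,\Delta\phi\,dv=\frac{\pi^m}{(m-1)!}\int_{\Delta(r)}g_r(o,x)\big(f^*\omega-f^*[D]\big)\wedge\alpha^{m-1}=T_f(r,\omega)-N_f(r,D),
\end{equation*}
the last equality being the definitions of $T_f$ and $N_f$ together with the standard fact that integrating against the current of integration of $f^*D$ recovers the divisor counting integral. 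Combining these two identities gives $T_f(r,\omega)+u_D\circ f(o)=m_f(r,D)+N_f(r,D)$, as claimed.

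The main technical obstacle is the singular contribution on $f^{-1}(D)$: although $\phi$ is only $\mathscr C^\infty$ off a divisor, the distributional identity $dd^c\phi=f^*\omega-f^*[D]$ must be correctly paired with the (locally integrable) Green kernel $g_r(o,\cdot)$. One rigorous route is to replace $u_D$ by a family of smooth regularizations $u_D^\varepsilon$, apply the Jensen-Dynkin formula (whose right-hand side is then classical), and pass to the limit as $\varepsilon\to 0$, invoking Poincar\'e-Lelong in the form $dd^c\log\|s\|^2=[D]-\omega$ for a defining section $s$ of $D$ to isolate the $-N_f(r,D)$ contribution. An alternative is a direct integration-by-parts argument on $\Delta(r)\setminus T_\varepsilon$, where $T_\varepsilon$ is an $\varepsilon$-tubular neighborhood of $f^{-1}(D)$, followed by letting $\varepsilon\to 0$ and computing the boundary flux around $f^{-1}(D)$ using local models of $\phi\sim-k\log|s\circ f|^2$.
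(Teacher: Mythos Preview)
Your proposal is correct and follows essentially the same route as the paper: apply the Jensen--Dynkin formula to $u_D\circ f$, convert the Laplacian term to a $dd^c$ via the K\"ahler identity, and then use the cohomological relation $(\ref{888})$ together with Poincar\'e--Lelong to split the right-hand side into $T_f(r,\omega)-N_f(r,D)$. The only cosmetic difference is that the paper justifies the polar-singularity hypothesis of Lemma~\ref{dynkin} by writing $u_D$ locally as the difference of two subharmonic functions (namely $u_D+\log|\tilde s_D|^2$ and $\log|\tilde s_D|^2$), whereas you invoke the logarithmic local model directly.
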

\begin{proof} 
Let $\mathscr O(D)$ be the holomorphic line bundle over $N$ defined by $D.$
Let $s_D$ be the  canonical section  associated to $D,$ i.e., $s_D$ is a holomorphic section  of  $\mathscr O(D)$ over $N$ with zero divisor $D.$
Locally, we write  $s_D=\tilde s_D e,$ where $e$ is a local holomorphic frame of $\mathscr O(D).$ By Poincar\'e-Lelong formula, we obtain   
$[D]=dd^c[\log|\tilde s_D|^2]>0$
in the sense of currents. Furthermore,  it yields from  (\ref{888}) that 
$$\omega=dd^c\big[2u_D+\log|\tilde s_D|^2\big]>0,$$
which leads to  that $2u_D+\log|\tilde s_D|^2$ is  plurisubharmonic.
Since $N$ is a K\"ahler manifold, we deduce   that  both $\log|\tilde s_D|^2$ and $2u_D+\log|\tilde s_D|^2$ are  subharmonic. 
Thus,  $u_D$ is the difference of two subharmonic functions. It implies that the set of poles of $u_D\circ f$ is polar.
So, applying Green-Dynkin formula to $u_D\circ f,$  we obtain   
$$\int_{\partial\Delta(r)}u_D\circ fd\pi_{r}-u_D\circ f(o)=\frac{1}{2}\int_{\Delta(r)}g_{r}(o,x)\Delta (u_D\circ f)dv.$$
The first term on the left hand side equals $m_f(r, D)$ and the right hand side is  that 
 \begin{eqnarray*}
&& \frac{1}{2}\int_{\Delta(r)}g_{r}(o,x)\Delta (u_D\circ f)dv \\
 &=& \frac{2\pi^m}{(m-1)!}\int_{\Delta(r)}g_{r}(o,x)dd^c(u_D\circ f)\wedge\alpha^{m-1}  \\
  &=&\frac{\pi^m}{(m-1)!}\int_{\Delta(r)}g_{r}(o,x)f^*\omega\wedge\alpha^{m-1} \\
  && -\frac{\pi^m}{(m-1)!}\int_{\Delta(r)}g_{r}(o,x)dd^c\left[\log|\tilde s_D\circ f|^2\right]\wedge\alpha^{m-1} \\
 &=& T_f(r, \omega)-N_f(r, D).
    \end{eqnarray*}
    This completes the proof.
\end{proof}

\subsection{Second Main Theorem}~

We work on  two typical types of  K\"ahler manifolds  with  either non-positive sectional curvature or non-negative Ricci curvature. On such manifolds, we would like to  establish the second main theorem of meromorphic mappings. 

\noindent\textbf{{A. Non-positively Curved Case}}~

Let $(M, g)$ be a complete non-compact K\"ahler manifold 
 with non-positive sectional  curvature.    Recall that  $\kappa(r)$ stands for   the lower bound of  the Ricci curvature of $M$ on $B(r),$  defined by  (\ref{ricci}) in Introduction.   
In this part, we set $\Delta(r)=B(r)$ with $r>0.$

    Put 
  \begin{equation}\label{chi}
 \chi(s, t)=\left\{
                \begin{array}{ll}
t, \  \ & s=0; \\
  \frac{\sinh s t}{s}, \ \ & s\not=0.
              \end{array}
              \right.
               \end{equation}
 Consider  the following  Jacobi equation on $[0,\infty)$:
   \begin{equation}\label{G}
 G''+\kappa(t)G=0; \ \ \ \    G(0)=0, \ \ \ \  G'(0)=1.
   \end{equation}
 Note that this equation  has a unique  continuous solution. 

We give two-sided estimates of $G(t)$ as follows. 
\begin{lemma}\label{est} For $t\geq0,$ we have 
   $$ \chi(0, t)\leq G(t)\leq  \chi\left(\sqrt{-\kappa(t)}, t\right),$$
   where $\chi(s,t)$ is defined by $(\ref{chi}).$
\end{lemma}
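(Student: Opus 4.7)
The plan is to prove each inequality separately using the standard Sturm-type comparison for the Jacobi equation, exploiting the monotonicity of $\kappa$ as a function of $r$ and the sign of $\kappa$.

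Since $M$ has non-positive sectional curvature, the Ricci curvature is non-positive, so $R(x)\leq 0$ and consequently $\kappa(t)\leq 0$ for all $t\geq 0$. Moreover, because $\kappa(r)$ is defined as the infimum over $B(r)$ and the balls are nested in $r$, the function $\kappa$ is non-increasing in $r$: for $s\leq t_0$ one has $\kappa(s)\geq \kappa(t_0)$.

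For the \emph{lower bound} $G(t)\geq t$, I would argue directly from the ODE. Since $G(0)=0$ and $G'(0)=1>0$, continuity gives $G>0$ on some initial interval. On such an interval $G''=-\kappa G\geq 0$, so $G'$ is non-decreasing and therefore $G'(t)\geq G'(0)=1$. A continuation argument shows $G$ cannot vanish again: as long as $G\geq 0$ we maintain $G'\geq 1$, hence $G(t)\geq t$ for all $t\geq 0$. This simultaneously proves the lower bound and confirms $G>0$ on $(0,\infty)$.

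For the \emph{upper bound}, fix $t_0>0$ and let $\kappa_0:=\kappa(t_0)\leq 0$. Let $H$ solve the constant-coefficient comparison equation $H''+\kappa_0 H=0$ with $H(0)=0$, $H'(0)=1$; explicitly $H(t)=\chi(\sqrt{-\kappa_0},t)$. By the monotonicity of $\kappa$, we have $\kappa(s)\geq \kappa_0$ on $[0,t_0]$. Consider the Wronskian-type quantity
\[
W(s):=G'(s)H(s)-G(s)H'(s), \qquad W(0)=0.
\]
A direct differentiation using the two ODEs gives $W'=(\kappa_0-\kappa(s))\,G(s)H(s)\leq 0$ on $[0,t_0]$, since $G,H\geq 0$ there and $\kappa_0-\kappa(s)\leq 0$. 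Hence $W\leq 0$, which is exactly $(G/H)'\leq 0$ where $H>0$. Taking the limit $s\to 0^+$ using L'H\^opital gives $\lim_{s\to 0^+}G(s)/H(s)=G'(0)/H'(0)=1$, so $G(s)\leq H(s)$ on $(0,t_0]$. Evaluating at $s=t_0$ yields
\[
G(t_0)\leq H(t_0)=\chi\bigl(\sqrt{-\kappa(t_0)},\,t_0\bigr),
\]
which is the desired bound since $t_0$ is arbitrary.

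The only technical point I expect to need care with is ensuring $H>0$ on $(0,t_0]$ so that the ratio $G/H$ is well-defined: this is immediate because $\kappa_0\leq 0$ makes $H(t)=\sinh(\sqrt{-\kappa_0}\,t)/\sqrt{-\kappa_0}$ (or $H(t)=t$ if $\kappa_0=0$) strictly positive for $t>0$. The monotonicity $\kappa(s)\geq\kappa(t_0)$ on $[0,t_0]$ is the crucial structural input from the definition of $\kappa$, and this is what makes Sturm comparison applicable with the single value $\kappa(t_0)$ on the right-hand side.
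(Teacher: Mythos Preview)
Your proof is correct and follows essentially the same strategy as the paper: both use Sturm-type comparison with the constant-coefficient solutions $t$ and $\chi(\sqrt{-\kappa(t_0)},t)$, exploiting the monotonicity $\kappa(s)\geq\kappa(t_0)$ on $[0,t_0]$. The only differences are presentational: for the lower bound the paper again invokes comparison (with $H_1''=0$) whereas you argue directly via convexity of $G$, and for the upper bound the paper cites a reference for the comparison inequality while you write out the Wronskian argument explicitly; your version is thus more self-contained but not a different method.
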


\begin{proof}   Treat   the following  Jacobi equation on $[0,\infty)$:
 $$H_1''=0; \ \ \ \    H_1(0)=0, \ \ \ \  H_1'(0)=1,$$
which is uniquely solved by  $H_1(t)=t.$ Since  $\kappa(t)\leq0$ for $t\geq0, $   the standard comparison argument in ODE theory   shows that   
$$G(t)\geq H_1(t)=t=\chi(0,t).$$
Fix any  number $t_0>0.$ It is clear  that $\kappa(t)\geq\kappa(t_0)$ for $0\leq t\leq t_0.$
Treat   the following initial value  problem on $[0, t_0]$:
 $$H_2''+\kappa(t_0)H_2=0; \ \ \ \    H_2(0)=0, \ \ \ \  H_2'(0)=1,$$
which gives a unique solution $H_2(t)=\chi(\sqrt{-\kappa(t_0)}, t).$ Applying  the standard comparison argument (see \cite{P1}, Theorems 1, 2 for instance), 
we deduce that 
 $$G(t)\leq H_2(t)=\chi\left(\sqrt{-\kappa(t_0)},t\right)$$
for $0\leq t\leq t_0.$  In particular,  
  $G(t_0)\leq \chi(\sqrt{-\kappa(t_0)},t_0).$ 
Using the arbitrariness  of $t_0,$ we deduce that 
    $$G(t)\leq \chi\left(\sqrt{-\kappa(t)},t\right)$$ for $t\geq0.$ This completes the proof. 
\end{proof}

    \begin{lemma}[\cite{at2018a}]\label{zz} Let $\eta>0$ be a  constant. Then,  there exists  a constant $c_1>0$ such that
  $$g_r(o,x)\geq c_1\frac{\displaystyle\int_{\rho(x)}^rG(t)^{1-2m}dt}{\displaystyle\int_{\eta}^rG(t)^{1-2m}dt}$$
  holds for all $x\in B(r)\setminus \overline{B(2\eta)}$ with $r>3\eta.$ In particular, if $M$ is non-parabolic, then 
  there exists a constant $c_2>0$ such that 
  $$g_r(o,x)\geq c_2\int_{\rho(x)}^rG(t)^{1-2m}dt$$
  holds for all $x\in B(r),$ where $G(t)$ is defined by $(\ref{G}).$
  \end{lemma}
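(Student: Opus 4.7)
The plan is to construct an explicit subharmonic radial barrier that vanishes on the outer boundary and to invoke the maximum principle on the annulus $A = B(r) \setminus \overline{B(2\eta)}$. Define
\begin{equation*}
u(x) = \int_{\rho(x)}^{r} G(t)^{1-2m}\,dt,
\end{equation*}
where $\rho(x) = \mathrm{dist}(o,x)$. Since the non-positive sectional curvature assumption forces the cut locus of $o$ to be empty, $\rho$ and hence $u$ are smooth on $M \setminus \{o\}$, with $u = 0$ on $\partial B(r)$.

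The first main step is to verify that $u$ is subharmonic on $M\setminus\{o\}$. The radial Ricci lower bound built into the definition of $\kappa(r)$, combined with Sturm comparison applied to the Jacobi equation $(\ref{G})$, yields the Laplacian comparison inequality
\begin{equation*}
\Delta \rho \leq (2m-1)\,\frac{G'(\rho)}{G(\rho)}.
\end{equation*}
Since $u$ is radial with $|\nabla \rho| = 1$, a direct computation gives
\begin{equation*}
\Delta u = (2m-1)\,G'(\rho)\,G(\rho)^{-2m} - G(\rho)^{1-2m}\,\Delta \rho,
\end{equation*}
and because $u'(\rho) = -G(\rho)^{1-2m} < 0$, substituting the above bound for $\Delta\rho$ reverses the sign and the two terms cancel to give $\Delta u \geq 0$.

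The second main step is a uniform lower bound for $g_r$ on the inner boundary $\partial B(2\eta)$. By standard domain-monotonicity of Dirichlet Green functions on nested precompact domains, $g_r \geq g_{3\eta}$ on $B(3\eta)$ whenever $r > 3\eta$; since $g_{3\eta}$ is positive and harmonic on $B(3\eta) \setminus \{o\}$, its minimum $c_0 > 0$ on the compact set $\partial B(2\eta)$ depends only on $\eta$. Setting $c_1 = c_0$ and
\begin{equation*}
w(x) = c_1\,\frac{u(x)}{\int_{\eta}^{r} G(t)^{1-2m}\,dt},
\end{equation*}
one has $w = 0 = g_r$ on $\partial B(r)$ and $w \leq c_1 \leq g_r$ on $\partial B(2\eta)$, using $\int_{2\eta}^{r} \leq \int_{\eta}^{r}$. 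Since $w$ is subharmonic on $A$ and $g_r$ is harmonic on $A$, the maximum principle applied to $g_r - w$ forces $g_r \geq w$ throughout $A$, which is the asserted inequality.

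For the non-parabolic case, one passes to the limit $r \to \infty$: the monotone convergence $g_r \nearrow g_\infty$ together with the fact that non-parabolicity forces $\int_{\eta}^{\infty} G(t)^{1-2m}\,dt$ to be finite lets the denominator be absorbed into a fresh constant $c_2$, and the common Euclidean-type singularity $\rho^{2-2m}$ of $g_r$ and $\int_{\rho}^{\infty}G^{1-2m}\,dt$ near $o$ extends the estimate from $M\setminus\overline{B(2\eta)}$ to the whole ball at the cost of enlarging the constant. The main obstacle is the subharmonicity verification in the first step, which demands a Laplacian comparison theorem adapted to the radially varying curvature function $\kappa(r)$ rather than a constant lower bound; once this is obtained from Sturm comparison applied to $(\ref{G})$, the remainder of the argument is classical potential theory on the annulus.
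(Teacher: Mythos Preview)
The paper does not supply its own proof of this lemma: it is quoted verbatim from Atsuji \cite{at2018a} and used as a black box. So there is nothing in the present paper to compare against; the relevant question is whether your argument stands on its own.

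It does, and it is the standard barrier argument one expects here. The computation $\Delta u\ge 0$ via the radially-varying Laplacian comparison $\Delta\rho\le(2m-1)G'(\rho)/G(\rho)$ is correct (this is exactly the Sturm comparison against the Jacobi equation $(\ref{G})$ with the Ricci lower bound $\kappa$), the inner-boundary minorant $g_r\ge g_{3\eta}\ge c_0$ on $\partial B(2\eta)$ via domain monotonicity is legitimate, and the maximum principle on the annulus then yields the first display. Your treatment of the non-parabolic addendum is somewhat compressed: the claim that non-parabolicity forces $\int_\eta^\infty G(t)^{1-2m}\,dt<\infty$ deserves a sentence (it follows from the volume/area comparison under the Ricci bound together with the standard capacity criterion for non-parabolicity), and the extension from the annulus to all of $B(r)$ relies on matching the $\rho^{2-2m}$ singularities of $g_r$ and of $\int_\rho^\infty G^{1-2m}$ near $o$, which you gesture at but do not carry out. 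Neither gap is serious, and both are routine once stated; the core of the argument is sound and is precisely the method of the cited reference.
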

    
    For the upper estimate  of $g_r(o,x),$ we have the following well-known fact. 
    
 \begin{lemma}[\cite{Deb}]\label{sing}  We have 
$$g_r(o,x)\leq\left\{
                \begin{array}{ll}
                  \frac{1}{\pi}\log\frac{r}{\rho(x)}, \  \   & m=1; \\
                  \frac{1}{(m-1)\omega_{2m-1}}\big{(}\rho(x)^{2-2m}-r^{2-2m}\big{)},  \ \     & m\geq2  \\
                \end{array}
              \right. $$
              and
$$d\pi_r(x)\leq\frac{r^{1-2m}}{\omega_{2m-1}}d\sigma_r(x),  \  \ \ \ \ \ \ \ \ \ \ \ \  \ \   \ \ \ \ \  \ \ 
\  \ \ \ \ \ \ \ \ \ \ \ \  $$
 where  $\omega_{2m-1}$ is the  standard Euclidean    area  of  the unit sphere in $\mathbb R^{2m}.$ 
\end{lemma}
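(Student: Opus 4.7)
The plan is to reduce both inequalities to their Euclidean counterparts through Laplacian comparison. Since $M$ has non-positive sectional curvature and real dimension $2m$, the Hessian (Laplacian) comparison theorem yields
$$\Delta \rho(x) \geq \frac{2m-1}{\rho(x)}, \qquad x \in M \setminus \{o\},$$
where $\rho(x) = d(o,x)$. Let $G_r(\rho)$ denote the radial function on the right-hand side of the claimed upper bound (the Euclidean Green function of $-\tfrac12\Delta$ on the Euclidean ball of radius $r$ in $\mathbb{R}^{2m}$, written as a function of $\rho$). A direct check shows that $G_r$ satisfies the Euclidean radial equation $G_r''(\rho) + \tfrac{2m-1}{\rho}G_r'(\rho) = 0$ on $(0,r)$, the boundary condition $G_r(r)=0$, the sign $G_r'(\rho)<0$, and a singularity at $\rho=0$ with the same leading coefficient as that of the Green function of $-\tfrac12\Delta$ on $M$ with pole at $o$.

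Next I would perform the key calculation on $M\setminus\{o\}$, using $|\nabla\rho|=1$:
$$-\tfrac12\Delta G_r(\rho(x)) = -\tfrac12 G_r''(\rho) - \tfrac12 G_r'(\rho)\Delta\rho = -\tfrac12 G_r'(\rho)\Bigl[\Delta\rho - \tfrac{2m-1}{\rho}\Bigr] \geq 0,$$
since $G_r'(\rho)<0$ and the bracket is nonnegative by Laplacian comparison. Setting $h(x):=G_r(\rho(x))-g_r(o,x)$, the singular parts at $o$ cancel (normal coordinates at $o$ show both functions have the same leading singularity), so $h$ extends continuously across $o$, vanishes on $\partial B(r)$, and satisfies $-\tfrac12\Delta h\geq 0$ distributionally on $B(r)$. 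The minimum principle for superharmonic functions then gives $h\geq 0$, which is the first bound $g_r(o,x)\leq G_r(\rho(x))$. For the harmonic-measure estimate, $g_r\leq G_r(\rho)$ together with common vanishing on $\partial B(r)$ yields the pointwise comparison of inward normal derivatives $\partial_{\vec\nu}\,g_r\leq -G_r'(r)$ on $\partial B(r)$; inserting $-G_r'(r)$ explicitly and using $\omega_1=2\pi$ to cover $m=1$ uniformly, then multiplying by $\tfrac12$ as in the definition of $d\pi_r$, produces the stated bound $d\pi_r\leq \omega_{2m-1}^{-1}r^{1-2m}d\sigma_r$.

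The most delicate point is making sure that $h$ is genuinely superharmonic across $o$, that is, that the Dirac masses produced by $-\tfrac12\Delta g_r$ and by $-\tfrac12\Delta G_r(\rho(\cdot))$ have equal coefficients and cancel exactly in the distributional identity. I would handle this by expanding the metric in normal coordinates at $o$, where $M$ is Euclidean modulo $O(\rho^2)$, constructing a local parametrix for $-\tfrac12\Delta$ that matches the Euclidean fundamental solution to leading order, and then applying the removable singularity theorem to promote the pointwise calculation above to the distributional statement required for the minimum principle.
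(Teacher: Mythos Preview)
The paper does not supply its own proof of this lemma: it is quoted directly from the cited reference \cite{Deb} (Debiard--Gaveau--Mazet) and used as a black box. Your argument via Laplacian comparison ($\Delta\rho\geq(2m-1)/\rho$ under non-positive sectional curvature), superharmonicity of the Euclidean radial profile $G_r(\rho(\cdot))$, and the minimum principle for $h=G_r(\rho)-g_r$ is correct and is precisely the standard route; the normal-derivative comparison on $\partial B(r)$ then yields the harmonic-measure bound exactly as you indicate. So there is nothing to compare against in the paper itself, and your reconstruction is sound.
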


We  need the  following Borel's growth lemma (see \cite{No, ru}): 

 \begin{lemma}\label{} Let $u\geq0$ be a non-decreasing  function  on $(r_0, \infty)$ with $r_0\geq0.$ Then for any $\delta>0,$ there exists a subset $E_\delta\subset(r_0,\infty)$
 of finite Lebesgue measure such that  
 $$u'(r)\leq u(r)^{1+\delta}$$
 holds for all $r>r_0$ outside $E_{\delta}.$  
 \end{lemma}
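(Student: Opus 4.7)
The plan is to exhibit the exceptional set explicitly as
$$E_\delta := \bigl\{r \in (r_0,\infty) : u'(r) > u(r)^{1+\delta}\bigr\}$$
and to bound its Lebesgue measure by a direct integration argument. Since $u$ is non-decreasing, $u'$ exists almost everywhere, and the inequality in the conclusion is to be interpreted at such points (in the intended applications $u$ is a characteristic-type function, so differentiability will not be an issue).

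First I would dispose of the trivial case $u \equiv 0$, in which both sides of the target inequality vanish. Otherwise, since $u \geq 0$ is non-decreasing with $u \not\equiv 0$, there is some $r_1 > r_0$ with $u(r_1) > 0$, and then $u(r) \geq u(r_1) > 0$ for all $r \geq r_1$. The key observation is the elementary identity
$$\frac{u'(r)}{u(r)^{1+\delta}} = -\frac{1}{\delta}\frac{d}{dr}\bigl[u(r)^{-\delta}\bigr],$$
valid at every point where $u$ is differentiable and $u(r) > 0$.

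Setting $w(r) = u(r)^{-\delta}$ on $[r_1,\infty)$, one sees that $w$ is non-increasing and non-negative. The fundamental theorem of calculus for monotone functions gives
$$\int_{r_1}^{R} \bigl(-w'(r)\bigr)\,dr \leq w(r_1) - w(R) \leq w(r_1) = \frac{1}{u(r_1)^{\delta}}$$
for every $R > r_1$; letting $R \to \infty$ yields
$$\int_{r_1}^{\infty} \frac{u'(r)}{u(r)^{1+\delta}}\,dr \leq \frac{1}{\delta\,u(r_1)^{\delta}}.$$
On the set $E_\delta \cap (r_1,\infty)$ the integrand strictly exceeds $1$, so
$$\mathrm{meas}\bigl(E_\delta \cap (r_1,\infty)\bigr) \leq \int_{E_\delta \cap (r_1,\infty)} \frac{u'(r)}{u(r)^{1+\delta}}\,dr \leq \frac{1}{\delta\,u(r_1)^{\delta}} < \infty.$$
Combined with $\mathrm{meas}\bigl(E_\delta \cap (r_0,r_1]\bigr) \leq r_1 - r_0 < \infty$, this gives $\mathrm{meas}(E_\delta) < \infty$, as required.

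The only subtlety I anticipate is that $u$ need only be monotone rather than, say, absolutely continuous, so one must invoke the inequality version of the fundamental theorem of calculus for monotone functions rather than the equality; but the inequality runs in the direction that bounds our integral from above, so this is not a real obstacle.
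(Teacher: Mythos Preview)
Your proof is correct and follows essentially the same route as the paper: both define $E_\delta$ explicitly as the set where $u'(r) > u(r)^{1+\delta}$, split off the finite interval $(r_0,r_1]$, and bound the measure of the remaining part by the integral $\int_{r_1}^\infty u'(r)/u(r)^{1+\delta}\,dr \leq 1/(\delta\,u(r_1)^\delta)$. You are in fact slightly more careful than the paper in noting that for merely monotone $u$ one only has the inequality form of the fundamental theorem of calculus, which is exactly what is needed.
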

 \begin{proof} The conclusion is clearly true  for $u\equiv0.$ Next, we assume that $u\not\equiv0.$
 Since $u\geq0$ is a non-decreasing  function,  there exists a number $r_1>r_0$ such that $u(r_1)>0.$  The non-decreasing property of $u$ implies that  the  limit
$\eta:=\lim_{r\to\infty}u(r)$
exists or $\eta=\infty.$  If $\eta=\infty,$ then $\eta^{-1}=0.$ 
 Set  
 $$E_\delta=\left\{r\in(r_0,\infty):  \ u'(r)>u(r)^{1+\delta}\right\}.$$
Since $u$ is a non-decreasing function on $(r_0, \infty),$  we deduce that  $u'(r)$ exists for  almost all  $r\in(r_0, \infty).$   It is therefore  
   \begin{eqnarray*}
\int_{E_\delta}dr 
 &\leq& \int_{r_0}^{r_1}dr+\int_{r_1}^\infty\frac{u'(r)}{u(r)^{1+\delta}}dr \\
 &=&\frac{1}{\delta u(r_1)^\delta}-\frac{1}{\delta \eta^\delta}+r_1-r_0\\
 &<&\infty.
    \end{eqnarray*}
 This completes the proof. 
 \end{proof}
 
  Set 
  \begin{equation}\label{kkk}
  K(r,\delta)=\frac{r^{1-2m}\bigg(\displaystyle\int_{\frac{1}{3}}^rG(t)^{1-2m}dt\bigg)^{(1+\delta)^2}}{G(r)^{(1-2m)(1+\delta)}},
    \end{equation}
where $G(t)$ is defined by (\ref{G}).

With the previous preparations,  we  establish a calculus lemma: 
\begin{theorem}\label{cal} Let $k\geq0$ be a locally integrable function on $M.$ Assume that $k$ is locally bounded at $o.$ Then 
 for any   $\delta>0,$  there exists   a subset $E_{\delta}\subset(1,\infty)$ of finite Lebesgue measure such that
$$\int_{\partial B(r)}kd\pi_r\leq C K(\delta, r)
      \bigg(\int_{B(r)}g_r(o,x)kdv\bigg)^{(1+\delta)^2}$$
holds for all  $r>1$ outside $E_{\delta},$ where    $K(r,\delta)$ is defined by $(\ref{kkk})$ and $C>0$ is a sufficiently large  constant independent of  $k, \delta, r.$ 
 \end{theorem}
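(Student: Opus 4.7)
The plan is to reduce the boundary integral on the left to a derivative of a volume integral, reduce the right hand side via the Green function estimate of Lemma \ref{zz} to an integrated volume quantity, and then invoke Borel's growth lemma twice, once on each of the two volume quantities, to pick up the doubled exponent $(1+\delta)^2$ and assemble the factor $K(r,\delta)$.

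More precisely, I would first set $A(r)=\int_{B(r)}k\,dv$, which is non-decreasing and absolutely continuous, with $A'(r)=\int_{\partial B(r)}k\,d\sigma_r$ for almost every $r$ by the coarea formula. The upper bound on the harmonic measure in Lemma \ref{sing} then yields
$$\int_{\partial B(r)}k\,d\pi_r\leq \frac{r^{1-2m}}{\omega_{2m-1}}A'(r).$$
Applying Borel's growth lemma to $A$ on $(1,\infty)$ produces a first exceptional set $E_1\subset(1,\infty)$ of finite Lebesgue measure outside which $A'(r)\leq A(r)^{1+\delta}$.

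Next, taking $\eta=1/6$ in Lemma \ref{zz} and writing $\phi(r)=\int_{1/6}^{r}G(t)^{1-2m}dt$, we get $g_r(o,x)\geq c_1\phi(r)^{-1}\int_{\rho(x)}^{r}G(t)^{1-2m}dt$ for $x\in B(r)\setminus \overline{B(1/3)}$. Multiplying by $k$ and integrating, then swapping the order of integration via Fubini, gives
$$\int_{B(r)}g_r(o,x)k\,dv\geq \frac{c_1}{\phi(r)}\int_{1/3}^{r}G(s)^{1-2m}\bigl(A(s)-A(1/3)\bigr)ds=:\frac{c_1}{\phi(r)}T(r),$$
where $A(1/3)<\infty$ since $k$ is locally bounded at $o$. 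Now applying Borel's growth lemma to $T$ on $(1,\infty)$ produces a second exceptional set $E_2$ of finite measure outside which $T'(r)=G(r)^{1-2m}\bigl(A(r)-A(1/3)\bigr)\leq T(r)^{1+\delta}$, so that
$$A(r)\leq A(1/3)+G(r)^{2m-1}T(r)^{1+\delta}.$$

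For $r$ large enough that $A(r)\geq 2A(1/3)$ (if $A$ remains bounded the result is trivial since then $A'\equiv 0$ outside a null set), the last inequality gives $A(r)\leq 2G(r)^{2m-1}T(r)^{1+\delta}$. Chaining the estimates for $r\in(1,\infty)\setminus(E_1\cup E_2)$,
\begin{align*}
\int_{\partial B(r)}k\,d\pi_r
&\leq \frac{r^{1-2m}}{\omega_{2m-1}}A(r)^{1+\delta}
\leq C'\,r^{1-2m}G(r)^{(2m-1)(1+\delta)}T(r)^{(1+\delta)^2}\\
&\leq C''\,\frac{r^{1-2m}\phi(r)^{(1+\delta)^2}}{G(r)^{(1-2m)(1+\delta)}}\Bigl(\int_{B(r)}g_r(o,x)k\,dv\Bigr)^{(1+\delta)^2},
\end{align*}
and since $\phi(r)$ differs from $\int_{1/3}^{r}G(t)^{1-2m}dt$ by a bounded additive constant, the right hand side is bounded by $CK(r,\delta)\bigl(\int_{B(r)}g_r k\,dv\bigr)^{(1+\delta)^2}$, as desired. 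The main technical point, and the one I expect to require the most care, is the Fubini step together with the absorption of the $A(1/3)$ term, since those are what ensure the exponent $(1+\delta)^2$ comes out cleanly and that the exceptional set really has finite measure; the remaining Borel applications and the upper bound on $d\pi_r$ are standard.
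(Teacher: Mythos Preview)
Your overall strategy is sound and very close to the paper's: both arguments combine the harmonic–measure upper bound of Lemma~\ref{sing}, the Green–function lower bound of Lemma~\ref{zz}, and two applications of Borel's growth lemma. The paper packages things slightly differently: it sets
\[
\Lambda(r)\;=\;c\,\omega_{2m-1}\Bigl(\int_{1/3}^{r}G^{1-2m}\Bigr)^{-1}\int_{1}^{r}t^{2m-1}\Bigl(\int_{\partial B(t)}k\,d\pi_t\Bigr)\Bigl(\int_{t}^{r}G^{1-2m}\Bigr)dt,
\]
verifies $\int_{B(r)}g_r k\,dv\ge\Lambda(r)$, and then applies Borel twice to the single function $P(r)=\Lambda(r)\int_{1/3}^{r}G^{1-2m}$ and its rescaled derivative. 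This has two small advantages over your route. First, no additive constant like $A(1/3)$ ever appears, so no absorption argument is needed. Second, because the passage from $\Lambda$ to $\int g_r k$ carries no multiplicative constant raised to the $(1+\delta)^2$ power, the resulting $C$ is genuinely independent of $\delta$; in your chain the factor $c_1^{-(1+\delta)^2}$ (and the ratio $\phi(r)/\int_{1/3}^{r}G^{1-2m}$ raised to $(1+\delta)^2$) creeps in, which is harmless for the downstream logarithmic applications but does not quite match the theorem as stated.

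There is one genuine slip: your parenthetical ``if $A$ remains bounded the result is trivial since then $A'\equiv 0$ outside a null set'' is false. A bounded, absolutely continuous, non-decreasing function can have $A'>0$ everywhere (e.g.\ $A(r)=1-1/r$). The clean fix, which also removes the need for the case split, is to apply Borel's lemma not to $A$ but to $\widetilde A(r):=A(r)-A(1/3)$ on $(1,\infty)$; this gives $A'(r)\le (A(r)-A(1/3))^{1+\delta}$ directly, and then your second Borel step yields $A(r)-A(1/3)\le G(r)^{2m-1}T(r)^{1+\delta}$, so the two chain together without any residual $A(1/3)$ term. With that correction your argument goes through.
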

\begin{proof} 
 For $r>1,$  it yields from Lemma \ref{sing} that 
      \begin{eqnarray*}
\Lambda(r) &:=& \int_{B(r)}g_r(o,x)kdv  \\
&\geq& \int_1^rdt\int_{\partial B(t)}g_r(o,x)kd\sigma_t \\
      &\geq& \omega_{2m-1} \int_1^rt^{2m-1}dt\int_{\partial B(t)}g_r(o,x)kd\pi_t.  
      \end{eqnarray*}
       According to Lemma \ref{zz},  there exists  a constant $c>0$ such that
  $$g_r(o,x)\geq c\frac{\displaystyle\int_{\rho(x)}^rG(t)^{1-2m}dt}{\displaystyle\int_{1/3}^rG(t)^{1-2m}dt}$$
  holds all $x\in B(r)\setminus \overline{B(2/3)}$ with $r>1.$
Whence, for $r>t>1$  
$$g_r(o,x)\big|_{\partial B(t)}\geq c\frac{\displaystyle\int_{t}^rG(s)^{1-2m}ds}{\displaystyle\int_{\frac{1}{3}}^rG(s)^{1-2m}ds}.$$  
       It is therefore 
                 \begin{eqnarray*}
   & &   \int_{\partial B(t)}g_r(o,x)kd\pi_t  \\
      &\geq& c\bigg(\displaystyle\int_{\frac{1}{3}}^rG(s)^{1-2m}ds\bigg)^{-1} \int_{\partial B(t)}kd\pi_t\displaystyle\int_{t}^rG(s)^{1-2m}ds. 
                       \end{eqnarray*}
Combining the above to get  
      \begin{eqnarray*}
&& \Gamma(r) \\ 
      &\geq& c\omega_{2m-1}\bigg(\displaystyle\int_{\frac{1}{3}}^rG(s)^{1-2m}ds\bigg)^{-1} \int_1^rt^{2m-1}dt\int_{\partial B(t)}kd\pi_t\displaystyle\int_{t}^rG(s)^{1-2m}ds \\
            &:=& \Lambda(r), 
      \end{eqnarray*}
    which  leads to  
                           \begin{eqnarray*}
\frac{\displaystyle\frac{d}{dr}\bigg(\Lambda(r)\displaystyle\int_{\frac{1}{3}}^rG(t)^{1-2m}dt\bigg)}{G(r)^{1-2m}} &=&  c\omega_{2m-1}\int_1^rt^{2m-1}dt\int_{\partial B(t)}kd\pi_t.
                       \end{eqnarray*}
In further, we obtain 
       $$ \frac{d}{dr}\frac{\displaystyle\frac{d}{dr}\bigg(\Lambda(r)\displaystyle\int_{\frac{1}{3}}^rG(t)^{1-2m}dt\bigg)}{G(r)^{1-2m}}=c\omega_{2m-1}r^{2m-1}\int_{\partial B(r)}kd\pi_r.$$                
       Using  Borel's growth lemma twice,  then for any $\delta>0,$  there exists a subset $E_\delta\subset(1,\infty)$ of finite Lebesgue measure such that   
            \begin{eqnarray*}
     \int_{\partial B(r)}kd\pi_r  
    &\leq& C\frac{\displaystyle r^{1-2m}\bigg(\displaystyle\int_{\frac{1}{3}}^rG(t)^{1-2m}dt\bigg)^{(1+\delta)^2}}{G(r)^{(1-2m)(1+\delta)}}\Lambda(r)^{(1+\delta)^2} \\
     &\leq& CK(r,\delta)\Gamma(r)^{(1+\delta)^2}
               \end{eqnarray*}
  holds for  all $r>1$ outside $E_\delta,$ where $C=1/c\omega_{2m-1}>0$ is  clearly a constant independent of  $k, \delta, r.$ 
  This completes the proof. 
\end{proof}

By  estimating $\log^+K(r,\delta),$ we further obtain: 

\begin{cor}\label{cal1} Let $k\geq0$ be a locally integrable function on $M.$ Assume that $k$ is locally bounded at $o.$ Then 
 for any   $\delta>0,$  there exists   a subset $E_{\delta}\subset(1,\infty)$ of finite Lebesgue measure such that
 \begin{eqnarray*}
&& \log^+\int_{\partial B(r)}kd\pi_r  \\
&\leq& 
      (1+\delta)^2\log^+\int_{B(r)}g_r(o,x)kdv+O\left(\sqrt{-\kappa(r)}r+\delta\log r\right)
      \end{eqnarray*} 
holds for all  $r>1$ outside $E_{\delta},$  where $\kappa$ is defined by $(\ref{ricci}).$     
 \end{cor}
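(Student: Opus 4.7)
The plan is to take the inequality from Theorem \ref{cal} and pass to logarithms, then reduce the corollary to showing that the ``extra factor'' $K(r,\delta)$ contributes only $O(\sqrt{-\kappa(r)}r+\delta\log r)$ to $\log^+$. Concretely, from Theorem \ref{cal} we have
$$\int_{\partial B(r)}k\,d\pi_r\leq CK(r,\delta)\left(\int_{B(r)}g_r(o,x)k\,dv\right)^{(1+\delta)^2}$$
for all $r>1$ outside a set of finite Lebesgue measure. Applying $\log^+$ and using the standard inequalities $\log^+(ab)\leq \log^+ a+\log^+ b+\log 2$ and $\log^+(a^t)=t\log^+ a$ for $t\geq 0$, I reduce to estimating $\log^+ K(r,\delta)$. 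Since $K(r,\delta)>0$, this in turn reduces to bounding $\log K(r,\delta)$ from above.

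Next, I unpack the definition of $K(r,\delta)$ from $(\ref{kkk})$:
$$\log K(r,\delta)=(1-2m)\log r+(1+\delta)^2\log\int_{1/3}^r G(t)^{1-2m}dt+(2m-1)(1+\delta)\log G(r).$$
For the integral term, Lemma \ref{est} gives $G(t)\geq t$, and since $1-2m<0$ this flips to $G(t)^{1-2m}\leq t^{1-2m}$. Integrating then yields $\int_{1/3}^r G(t)^{1-2m}dt \leq C_m\max\{1,\log r\}$ (logarithmic for $m=1$, bounded for $m\geq 2$), so its logarithm is at worst $O(\log\log r)$, which is absorbed into $O(\delta\log r)$.

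For the final term, I use the upper bound half of Lemma \ref{est}, namely $G(r)\leq\chi(\sqrt{-\kappa(r)},r)$. Combined with $\sinh x\leq xe^x$ this gives $G(r)\leq r\,e^{\sqrt{-\kappa(r)}\,r}$, hence
$$\log G(r)\leq \log r+\sqrt{-\kappa(r)}\,r.$$
Putting this together with the $(1-2m)\log r$ contribution, the net $\log r$ coefficient is $(1-2m)+(2m-1)(1+\delta)=(2m-1)\delta$, so
$$\log K(r,\delta)\leq (2m-1)\delta\log r+(2m-1)(1+\delta)\sqrt{-\kappa(r)}\,r+O(\log\log r),$$
which is exactly $O(\sqrt{-\kappa(r)}\,r+\delta\log r)$.

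The only real care needed is the bookkeeping with the sign $1-2m<0$, which flips inequality directions when raising $G$ to that power: the lower bound $G(t)\geq t$ is what controls the numerator integral, while the upper bound $G(r)\leq\chi(\sqrt{-\kappa(r)},r)$ is what controls the denominator power $G(r)^{(1-2m)(1+\delta)}$ from below. There is no serious analytic obstacle; the corollary is essentially a direct corollary of Theorem \ref{cal} once these elementary bounds on $G$ are inserted.
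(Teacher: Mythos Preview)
Your proposal is correct and follows essentially the same route as the paper: apply Theorem \ref{cal}, take $\log^+$, and bound $\log^+K(r,\delta)$ using the two-sided estimates $t\leq G(t)\leq\chi(\sqrt{-\kappa(t)},t)$ from Lemma \ref{est}. The only noteworthy difference is cosmetic: the paper splits into the cases $\kappa\equiv0$ and $\kappa\not\equiv0$, whereas your use of $\sinh x\leq xe^x$ gives the uniform bound $G(r)\leq r\,e^{\sqrt{-\kappa(r)}\,r}$ and handles both cases at once, which is slightly cleaner.
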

\begin{proof}  
 By  Lemma \ref{est}, we have  $G(t)\geq t$ for $t>0.$ Thus, one has
   \begin{eqnarray*}
\int_{\frac{1}{3}}^rG(t)^{1-2m}dt &\leq& \int_{1}^r t^{1-2m}dt+O(1) \\
&\leq& \log r+O(1)
\end{eqnarray*}
for $r>1.$
It yields   that 
\begin{equation}\label{x1}
\log^+\left(\int_{\frac{1}{3}}^rG(t)^{1-2m}dt\right)^{(1+\delta)^2}\leq (1+\delta)^2\log^+\log r+O(1).
\end{equation}
If $\kappa\equiv0,$ then $G(r)\leq r$ due to Lemma \ref{est}. Thus, we obtain  
$$r^{1-2m}G(r)^{(2m-1)(1+\delta)}\leq r^{(2m-1)\delta},$$
which leads to  
\begin{equation}\label{x2}
\log^+\left(r^{1-2m}G(r)^{(2m-1)(1+\delta)}\right)\leq (2m-1)\delta \log r
\end{equation}
for $r>1.$ Combining (\ref{x1}) with (\ref{x2}), we conclude that 
$$\log^+K(r,\delta)\leq O(\delta\log r).$$
By Theorem \ref{cal}, we have  the conclusion holds. 
If $\kappa\not\equiv0,$ then  Lemma \ref{est}  gives that 
\begin{eqnarray}\label{x3}
&& \log^+\left(r^{1-2m}G(r)^{(2m-1)(1+\delta)}\right) \\
&\leq&\log^+G(r)^{(2m-1)(1+\delta)} \nonumber  \\
&\leq& O\left(\sqrt{-\kappa(r)}r+1\right). \nonumber
\end{eqnarray}
Combining (\ref{x1}) with (\ref{x3}), we conclude that 
$$\log^+K(r,\delta)\leq O\left(\sqrt{-\kappa(r)}r+1\right).$$
Hence,  the conclusion also  holds  due to   Theorem \ref{cal}.  
\end{proof}

Let $D_1,\cdots, D_q$ be  effective divisors  on $N$ such that each $D_j$ is cohomologous to $\omega,$ i.e., there are  functions $u_{D_1},\cdots, u_{D_q}\geq0$ on $N$ such that 
$$\omega-[D_j]=2dd^cu_{D_j}, \ \ \ \  j=1,\cdots,q.$$ 
Set
$$\Psi=\frac{\omega^n}{\prod_{j=1}^q u_{D_j}^{-2}e^{-2u_{D_j}}}.$$

\begin{lemma}\cite{gri}\label{grii}
Assume that   $D_1, \cdots, D_q$ are  in general position. 
If $qw-{\rm Ric}\Psi>0,$ then 

$(a)$  $-{\rm Ric}\Psi\geq0;$

$(b)$ there exists a constant $c>0$ such that $(-{\rm Ric}\Psi)^n\geq c\Psi;$

$(c)$ $\int_{N\setminus D}(-{\rm Ric}\Psi)^n<\infty,$ where $D=D_1+\cdots+D_q.$
\end{lemma}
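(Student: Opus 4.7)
The plan is to carry out the standard Carlson-Griffiths-type curvature computation (cf.\ \cite{gri}) for the singular volume form $\Psi$. First I would compute $-{\rm Ric}(\Psi)$ on $N\setminus D$ by applying $dd^c$ to a local potential of $\log\Psi$. Since $[D_j]$ is supported on $D_j\subset D$, the identity $\omega-[D_j]=dd^c u_{D_j}$ reduces to $dd^c u_{D_j}=\omega$ on $N\setminus D$, and the chain rule then yields $dd^c\log u_{D_j} = \omega/u_{D_j} - du_{D_j}\wedge d^c u_{D_j}/u_{D_j}^2$ there. Inserting this into $-{\rm Ric}(\Psi)=dd^c\log\Psi$ and grouping terms produces an explicit formula in which $-{\rm Ric}(\Psi)$ is the sum of (i) the strictly positive smooth form $q\omega-{\rm Ric}(\omega^n)$, (ii) subleading $\omega/u_{D_j}$ contributions, and (iii) rank-one singular forms $du_{D_j}\wedge d^c u_{D_j}/u_{D_j}^2$, which are positive semidefinite of type $\beta\wedge\bar\beta$.

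For part (a) I would normalize the Hermitian metrics on $\mathscr{O}(D_j)$ (equivalently, shift each $u_{D_j}$ by a large positive constant, which is consistent with $u_{D_j}\geq 0$) so that every $u_{D_j}$ exceeds a sufficiently large threshold $c_0>0$; then the subleading contributions of size $\omega/u_{D_j}$ are dominated by $q\omega-{\rm Ric}(\omega^n)$, and the remaining singular terms enter with the favorable sign. This yields $-{\rm Ric}(\Psi)\geq 0$ on $N\setminus D$.

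For (b) and (c) I would study $-{\rm Ric}(\Psi)$ pointwise. Fix $p\in N\setminus D$; by general position, at most $k\leq n$ of the divisors $D_j$ pass near $p$, and I choose local coordinates $z_1,\ldots,z_n$ so that the relevant divisors are the coordinate hyperplanes $\{z_l=0\}$ for $l=1,\ldots,k$, with $\omega$ close to the standard Euclidean form at $p$. Locally $u_{D_l}\sim\log(1/|z_l|^2)$ up to smooth terms, so the rank-one contribution $du_{D_l}\wedge d^c u_{D_l}/u_{D_l}^2$ supplies an eigenvalue of order $1/(|z_l|^2 u_{D_l}^2)$ along the $z_l$-direction, while the other $n-k$ eigenvalues of $-{\rm Ric}(\Psi)$ remain uniformly bounded below by a positive constant arising from $q\omega-{\rm Ric}(\omega^n)$. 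Taking the $n$-fold wedge, the product of eigenvalues matches the local form of $\Psi$ up to constants, giving the two-sided pointwise bound $c\Psi\leq(-{\rm Ric}(\Psi))^n\leq C\Psi$; (b) is then immediate, and (c) reduces to $\int_{N\setminus D}\Psi<\infty$, which follows by a partition of unity from the elementary estimate $\int_0^{1/2}dr/(r(\log r)^2)<\infty$. The main technical obstacle, I expect, is the multi-divisor pointwise bookkeeping: ensuring that cross terms in the expanded $n$-fold wedge do not contaminate the leading singular estimate, which is where the general-position hypothesis is used essentially to reduce locally to the normal-crossings model.
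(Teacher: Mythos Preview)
The paper does not supply its own proof of this lemma; it is simply quoted from Carlson--Griffiths \cite{gri} and used as a black box. Your proposal reproduces exactly the standard Carlson--Griffiths curvature computation for the singular volume form, and the outline is correct: expand $-{\rm Ric}\Psi$ via $dd^c\log\Psi$, isolate the smooth positive part $q\omega-{\rm Ric}(\omega^n)$, absorb the $O(1/u_{D_j})$ terms by rescaling the metrics, and use general position to reduce the local eigenvalue analysis to a normal-crossings model, where the $du_{D_j}\wedge d^c u_{D_j}/u_{D_j}^2$ contributions supply the singular eigenvalues matching $\Psi$.

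One remark: the hypothesis printed in the lemma is ``$q\omega-{\rm Ric}\Psi>0$'', whereas your computation (correctly) uses ``$q\omega-{\rm Ric}(\omega^n)>0$''. The latter is both the classical Carlson--Griffiths hypothesis and the assumption actually imposed in Theorems \ref{main1}--\ref{main4} where the lemma is invoked, so this is almost certainly a typographical slip in the statement rather than a defect in your argument.
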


  \begin{theorem}\label{main1}  Let $M$ be a  
     complete non-compact  K\"ahler manifold   with non-positive sectional curvature.  
Let $(N,\omega)$ be a  compact  K\"ahler  manifold of complex dimension not greater than  that  of $M.$
 Let $D_1,\cdots,D_q$ be effective divisors in general position on $N$ such that each $D_j$ is cohomologous to  $\omega.$
  Let  $f:M\rightarrow N$ be a  differentiably non-degenerate meromorphic mapping.  
  Assume that $q\omega-{\rm Ric}(\omega^n)>0.$
  Then  for any  $\delta>0,$ there exists a subset $E_\delta\subset(1, \infty)$ of finite Lebesgue measure such that 
        \begin{eqnarray*}
&&qT_f(r,\omega)+T_f(r, K_N)+T(r, \mathscr R) \\
&\leq& \sum_{j=1}^q\overline N_f(r,D_j)+O\big(\log T_{f}(r, \omega)+\sqrt{-\kappa(r)}r+\delta\log r\big)
         \end{eqnarray*}
    holds for all  $r>1$ outside $E_\delta,$  where $\kappa$ is defined by $(\ref{ricci}).$          
\end{theorem}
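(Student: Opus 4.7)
The plan is to adapt the classical Carlson-Griffiths method to the complete K\"ahler setting developed in this paper. The central object is the singular volume form $\Psi$ on $N$ defined just before Lemma \ref{grii}; by that lemma, $-{\rm Ric}\,\Psi$ is non-negative, satisfies $(-{\rm Ric}\,\Psi)^{n}\ge c\Psi$, and is integrable away from $D=D_{1}+\cdots+D_{q}$. I would introduce a non-negative density $\xi$ on $M$ obtained by comparing $f^{*}\Psi$ to the reference form $\alpha^{m}$ (directly when $n=m$, and via the AM-GM-compatible pairing $f^{*}(-{\rm Ric}\,\Psi)^{n}\wedge\alpha^{m-n}$ when $n<m$), so that $\log\xi$ is a difference of quasi-plurisubharmonic functions whose polar set is contained in $f^{-1}(D)$ together with the ramification locus of $f$.

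The main computation is to apply the Jensen-Dynkin formula (Lemma \ref{dynkin}) to $\log\xi$. Using $\omega-[D_{j}]=dd^{c}u_{D_{j}}$ together with the defining expression of $\Psi$, one obtains
\begin{equation*}
dd^{c}\log\xi \;=\; qf^{*}\omega - f^{*}{\rm Ric}(\omega^{n}) + \mathscr R - \sum_{j=1}^{q}[f^{*}D_{j}] + 2\sum_{j=1}^{q}dd^{c}\log(u_{D_{j}}\circ f),
\end{equation*}
modulo the ramification $(1,1)$-current of $f$. Wedging with $g_{r}(o,\cdot)\alpha^{m-1}$, integrating over $B(r)$, and recognizing the Nevanlinna functions converts the right-hand side of Jensen-Dynkin into
\[
qT_{f}(r,\omega)+T_{f}(r,K_{N})+T(r,\mathscr R)-\sum_{j=1}^{q}N_{f}(r,D_{j}) + 2\,(\text{log-log terms}).
\]
A Poincar\'e-Lelong argument using the logarithmic pole of $u_{D_{j}}$ along $D_{j}$ then replaces $N_{f}(r,D_{j})$ by the simple counting function $\overline N_{f}(r,D_{j})$, with the difference absorbed into an $O(\log T_{f}(r,\omega))$ error.

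The boundary integral $\int_{\partial B(r)}\log\xi\,d\pi_{r}$ is bounded from above in three moves: concavity of the logarithm gives $\int_{\partial B(r)}\log\xi\,d\pi_{r}\le\log\int_{\partial B(r)}\xi\,d\pi_{r}$; the calculus lemma (Corollary \ref{cal1}) converts this into $(1+\delta)^{2}\log^{+}\int_{B(r)}g_{r}\xi\,dv + O(\sqrt{-\kappa(r)}\,r+\delta\log r)$; and Lemma \ref{grii}(b) combined with the arithmetic-geometric mean inequality between $f^{*}(-{\rm Ric}\,\Psi)\wedge\alpha^{m-1}/\alpha^{m}$ and $(f^{*}(-{\rm Ric}\,\Psi)^{n}\wedge\alpha^{m-n}/\alpha^{m})^{1/n}$ bounds $\int_{B(r)}g_{r}\xi\,dv$ by $O(T_{f}(r,\omega)+T_{f}(r,K_{N})+T(r,\mathscr R))$. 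Combining the two sides of Jensen-Dynkin and rearranging yields the claimed SMT with remainder $O(\log T_{f}(r,\omega)+\sqrt{-\kappa(r)}\,r+\delta\log r)$.

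The principal obstacle is the handling of the $\log(u_{D_{j}}\circ f)$ terms, which unavoidably appear in $dd^{c}\log\xi$ and must be absorbed into the $O(\log T_{f}(r,\omega))$ remainder via a log-log estimate carried out against the Green function $g_{r}$. Equally delicate is ensuring that the factor $(1+\delta)^{2}$ from the calculus lemma multiplies a quantity already comparable to $T_{f}(r,\omega)$ rather than a higher power of it; this is where Lemma \ref{grii}(b) and the AM-GM step are indispensable, and it is also the step that brings in the geometric factor $\sqrt{-\kappa(r)}\,r$ through the Jacobi comparison function $G(t)$ of Lemma \ref{est}.
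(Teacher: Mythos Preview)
Your overall architecture matches the paper's proof: define $\xi$ by $f^{*}\Psi\wedge\alpha^{m-n}=\xi\,\alpha^{m}$, compute $dd^{c}\log\xi$ via Poincar\'e--Lelong, apply Jensen--Dynkin to produce the Nevanlinna terms, and bound the boundary integral of $\log\xi$ using concavity together with Corollary~\ref{cal1}. The treatment of the $\log(u_{D_j}\circ f)$ terms and the passage from $N_f$ to $\overline{N}_f$ via the ramification current are also as in the paper. (A minor point: your description of $\xi$ is muddled; the paper uses $f^{*}\Psi\wedge\alpha^{m-n}=\xi\,\alpha^{m}$ uniformly, with no case split, and $(-{\rm Ric}\,\Psi)^{n}$ enters only later through Lemma~\ref{grii}(b).)

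There is, however, a genuine gap in the order of your boundary estimate. You propose to apply concavity and then Corollary~\ref{cal1} to $\xi$ itself, arriving at $(1+\delta)^{2}\log^{+}\!\int_{B(r)}g_r\,\xi\,dv$, and only afterwards to invoke AM--GM to bound $\int_{B(r)}g_r\,\xi\,dv$ by $O(T_f(r,\omega))$. That last step fails: the AM--GM inequality you quote yields only a \emph{pointwise} bound $\xi^{1/n}\le C\cdot\bigl(-f^{*}{\rm Ric}\,\Psi\wedge\alpha^{m-1}\bigr)/\alpha^{m}$, hence controls $\int g_r\,\xi^{1/n}\,dv$ by $T_f(r,-{\rm Ric}\,\Psi)=O(T_f(r,\omega))$, but gives no useful bound on $\int g_r\,\xi\,dv$, which can be of much larger order. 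The paper resolves exactly the difficulty you flag in your last paragraph by reversing the order: write $\int_{\partial B(r)}\log\xi\,d\pi_r=n\int_{\partial B(r)}\log\xi^{1/n}\,d\pi_r$, apply concavity and Corollary~\ref{cal1} to $\xi^{1/n}$, and \emph{then} use the pointwise AM--GM bound to get $n(1+\delta)^{2}\log T_f(r,-{\rm Ric}\,\Psi)+O(\sqrt{-\kappa(r)}\,r+\delta\log r)$.
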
  

\begin{proof}
Define a non-negative function $\xi$ by 
$$f^*\Psi\wedge\alpha^{m-n}=\xi\alpha^m.$$
Using  Poincar\'e-Lelong formula, we obtain   
 \begin{eqnarray*}
&& dd^c\left[\log\xi\right] \\
&=& -f^*{\rm Ric}(\omega^n)+\mathscr R+[D_{f,\rm ram}]+2\sum_{j=1}^q dd^c(u_{D_j}\circ f)+2\sum_{j=1}^q dd^c\log(u_{D_j}\circ f) \\
&=& -f^*{\rm Ric}(\omega^n)+\mathscr R+[D_{f,\rm ram}]+\sum_{j=1}^q\left(f^*\omega-f^*[D_j]\right)
+2\sum_{j=1}^q dd^c\log(u_{D_j}\circ f) \\
&=& qf^*\omega-f^*{\rm Ric}(\omega^n)+\mathscr R-f^*[D]+[D_{f,\rm ram}]+2\sum_{j=1}^q dd^c\log(u_{D_j}\circ f).
 \end{eqnarray*}
Since $-[D_j]\leq 2dd^c u_{D_j}\leq\omega$ with  $j=1,\cdots,q,$
 it  follows from Green-Dynkin formula and the first main theorem that  
 \begin{eqnarray}\label{gjj}
&& \frac{1}{2}\int_{\partial B(r)}\log \xi d\pi_r-\frac{1}{2}\log \xi(o) \\
&=&\frac{1}{4}\int_{B(r)}\Delta\log\xi dv \nonumber \\
 &\geq& qT_f(r,\omega)+T_f(r, K_N)+T(r,\mathscr R)-N_f(r, D) +N(r, D_{f, \rm ram}) \nonumber \\
 && +~\frac{1}{2}\sum_{j=1}^q\int_{B(r)}g_r(o,x)\Delta \log(u_{D_j}\circ f) dv+O(1)  \nonumber \\
&\geq& qT_f(r,\omega)+T_f(r, K_N)+T(r,\mathscr R)-\overline{N}_f(r, D)+O\big(\log T_f(r,\omega)\big). \nonumber
 \end{eqnarray}
Write
$$-f^*{\rm Ric}\Psi=\frac{\sqrt{-1}}{\pi}\sum_{i,j=1}^m\psi_{i\bar j}dz_i\wedge d\bar z_j.$$
For any $x_0\in M,$ one can take a   normal local holomorphic coordinate $z$  near $x_0$ such that  
$$\alpha|_{x_0}=\frac{\sqrt{-1}}{\pi}\sum_{j=1}^m dz_j\wedge d\bar z_j.$$
By Lemma \ref{grii}
 \begin{eqnarray*}
f^*\Psi\wedge\alpha^{m-n}\big|_{x_0}&\leq& c^{-1}\left(-f^*{\rm Ric\Psi}\right)^n\wedge\alpha^{m-n} \\
&=& \bigg(\frac{\sqrt{-1}}{\pi}\sum_{i,j=1}^m\psi_{i\bar j}dz_i\wedge d\bar z_j\bigg)^{n}\bigwedge \bigg( \frac{\sqrt{-1}}{\pi}\sum_{j=1}^m dz_j\wedge d\bar z_j\bigg)^{m-n} \\
&=& c^{-1}(m-n)!\sum_{1\leq i_1\not=\cdots\not=i_n\leq m}\psi_{i_1\bar i_1}\cdots\psi_{i_n\bar i_n}\alpha^m \\
&\leq& c^{-1}(m-n)! \left({\rm tr}(\psi_{i\bar j})\right)^n\alpha^m.
 \end{eqnarray*}
On the other hand, we have 
\begin{equation}
-f^*{\rm Ric\Psi}\wedge\alpha^{m-1}\big|_{x_0}=(m-1)!{\rm tr}(\psi_{i\bar j})\alpha^{m},
\end{equation}
which  leads to 
 \begin{eqnarray*}
\xi^{\frac{1}{n}}\big|_{x_0}&=&\left(\frac{f^*\Psi\wedge\alpha^{m-n}}{\alpha^m}\right)^{\frac{1}{n}} \\
&\leq& c^{-\frac{1}{n}}(m-n)!^{\frac{1}{n}} {\rm tr}(\psi_{i\bar j}) \\
&=&-\frac{c^{-\frac{1}{n}}(m-n)!^{\frac{1}{n}}}{(m-1)!}\frac{f^*{\rm Ric\Psi}\wedge\alpha^{m-1}}{\alpha^m}.
 \end{eqnarray*}
By the arbitrariness  of $x_0,$ we show that 
$$\xi^{\frac{1}{n}}\leq -\frac{c^{-\frac{1}{n}}(m-n)!^{\frac{1}{n}}}{(m-1)!}\frac{f^*{\rm Ric\Psi}\wedge\alpha^{m-1}}{\alpha^m}.$$
 Using Corollary \ref{cal1},  it is therefore    
  \begin{eqnarray*}
 \int_{\partial B(r)}\log \xi d\pi_r \
&=&  n\int_{\partial B(r)}\log \xi^{\frac{1}{n}} d\pi_r \\
&\leq&  n\log\int_{\partial B(r)}\xi^{\frac{1}{n}} d\pi_r\ \\
&\leq& n(1+\delta)^2\log \int_{B(r)}g_r(o,x)\xi^{\frac{1}{n}}dv+O\left(\sqrt{-\kappa(r)}r+\delta\log r\right) \\
&\leq& n(1+\delta)^2\log \int_{B(r)}g_r(o,x)\frac{-f^*{\rm Ric\Psi}\wedge\alpha^{m-1}}{\alpha^m}dv \\
&& +~O\left(\sqrt{-\kappa(r)}r+\delta\log r\right) \\
&=& n(1+\delta)^2\log \int_{B(r)}g_r(o,x) (-f^*{\rm Ric\Psi})\wedge\alpha^{m-1}\\ 
&&+~O\left(\sqrt{-\kappa(r)}r+\delta\log r\right) \\
&=& n(1+\delta)^2\log T_f(r, -{\rm Ric}\Psi)+O\left(\sqrt{-\kappa(r)}r+\delta\log r\right).
 \end{eqnarray*}
Since $N$ is compact and  $\omega>0,$ we get 
$$T_f(r, -{\rm Ric}\Psi)\leq O\big(T_f(r, \omega)+1\big).$$
This implies that 
 $$ \int_{\partial B(r)}\log \xi d\pi_r \leq O\left(\log T_f(r, \omega)+\sqrt{-\kappa(r)}r+\delta\log r\right). $$
By this with (\ref{gjj}), we prove the theorem.
\end{proof}

\noindent\textbf{{B. Ricci Non-negatively Curved  Case}}~

 Assume   that  $M$ is  non-parabolic, with non-negative Ricci curvature. 
  Let    $V(r)$ denote  the Riemannian volume of geodesic ball $B(r)$   centered at $o$ with radius  $r$ in $M.$
       The non-parabolicity of $M$ means  that
 $$\int_1^\infty\frac{t}{V(t)}dt<\infty.$$
Thus, we have the unique minimal positive global Green function $G(o,x)$ of $\Delta/2$  for $M,$  
which can be written as 
$$G(o, x)=2\int_0^\infty p(t, o,x)dt,$$
where $p(t,o,x)$ is the heat kernel of $\Delta$ on $M.$
  Let $\rho(x)$ be the Riemannian distance function of $x$ from $o.$  By  Li-Yau's estimate \cite{Li-Yau}, there exist constants $A, B>0$ such that  
 \begin{equation}\label{Gr}
 A\int_{\rho(x)}^\infty\frac{t}{V(t)}dt\leq G(o,x)\leq B\int_{\rho(x)}^\infty\frac{t}{V(t)}dt
 \end{equation}
holds for all $x\in M.$ 
\subsubsection{Construction of $\Delta(r)$}~\label{sec421}

 For $r>0,$ define  
 \begin{equation}\label{delta}
 \Delta(r)=\left\{x\in M: \    G(o,x)>A\int_r^\infty\frac{t}{V(t)}dt\right\}.
  \end{equation} 
Since    
$\lim_{x\to o}G(o,x)=\infty$ and   $\lim_{x\to\infty}G(o,x)=0,$  we conclude   that 
    $\Delta(r)$ is a precompact  domain containing $o$  satisfying   that 
 $ \lim_{r\to0}\Delta(r)\to \emptyset$ and  $\lim_{r\to\infty}\Delta(r)=M.$
Moreover,  the family 
     $\{\Delta(r)\}_{r>0}$ exhausts $M.$    
Hence,  the boundary $\partial\Delta(r)$ of $\Delta(r)$ can be formulated as
 $$\partial\Delta(r)=\left\{x\in M: \    G(o,x)=A\int_r^\infty\frac{t}{V(t)}dt\right\}.$$
  By   Sard's theorem,   $\partial\Delta(r)$  is a submanifold of $M$ for almost all $r>0.$  
    
     Set
 $$g_r(o,x)=G(o,x)-A\int_r^\infty\frac{t}{V(t)}dt.$$
Note that   $g_r(o,x)$ defines   the  Green function of $\Delta/2$ for $\Delta(r)$ with a pole at $o$ satisfying Dirichelet boundary condition, i.e., 
  $$-\frac{1}{2}\Delta g_r(o,x)=\delta_o(x), \ \ \ \  ^\forall x\in\Delta(r);  \ \ \  \   \  g_r(o,x)=0, \ \ \  \ ^\forall x\in\partial\Delta(r).$$
 Let  $\pi_r$ denote the harmonic measure  on $\partial\Delta(r)$ with respect to $o,$ defined by
  $$d\pi_r(x)=\frac{1}{2}\frac{\partial g_r(o,x)}{\partial{\vec{\nu}}}d\sigma_r(x), \ \ \ \   ^\forall x\in\partial\Delta(r),$$
  where  $\partial/\partial \vec\nu$ is the inward  normal derivative on $\partial \Delta(r),$ $d\sigma_{r}$ is the Riemannian area element of 
$\partial \Delta(r).$

   \begin{lemma}\label{thm4} For $x\in\partial\Delta(t),$ we have
 $$g_r(o,x)=A\int_t^r \frac{s}{V(s)}ds$$
holds  for all $r\geq t,$ where $A$ is given by $(\ref{Gr}).$
  \end{lemma}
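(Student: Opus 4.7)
The plan is to prove Lemma \ref{thm4} by direct substitution into the two defining formulas already established in Section \ref{sec421}. Unlike most of the results in this paper, this lemma is a one-line computation once the right pieces are lined up, so my proposal is simply to spell out that computation.

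First, I would recall the explicit boundary description
\[
\partial\Delta(t) = \Bigl\{x \in M :\ G(o,x) = A\int_{t}^{\infty}\frac{s}{V(s)}\,ds\Bigr\},
\]
which is immediate from the definition (\ref{delta}) of $\Delta(r)$ together with the continuity of $G(o,\cdot)$. Hence for any $x \in \partial\Delta(t)$ the value of the global Green function $G(o,x)$ is pinned down precisely as $A\int_{t}^{\infty}s/V(s)\,ds$.

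Next I would plug this into the defining formula
\[
g_r(o,x) = G(o,x) - A\int_{r}^{\infty}\frac{s}{V(s)}\,ds
\]
given just before the lemma. For $x \in \partial\Delta(t)$ with $0 < t \leq r$ this gives
\[
g_r(o,x) = A\int_{t}^{\infty}\frac{s}{V(s)}\,ds - A\int_{r}^{\infty}\frac{s}{V(s)}\,ds = A\int_{t}^{r}\frac{s}{V(s)}\,ds,
\]
where the last equality uses additivity of the integral together with the non-parabolicity assumption, which guarantees convergence of $\int_{1}^{\infty}s/V(s)\,ds$ so that both tail integrals are finite and the subtraction is legitimate. This yields exactly the stated identity, and is independent of the particular $x \in \partial\Delta(t)$ chosen.

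Since the proof is a direct substitution, there is no real obstacle; the only thing worth flagging is the implicit hypothesis that $t$ be such that the level set $\partial\Delta(t)$ is nonempty, which is automatic for $t$ in the range of radii appearing in the exhaustion $\{\Delta(r)\}_{r>0}$. No smoothness of $\partial\Delta(t)$ is needed for the identity itself (smoothness is only needed later when one integrates over $\partial\Delta(t)$ against $d\pi_t$, and Sard's theorem has already been invoked for that purpose).
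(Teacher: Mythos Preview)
Your proof is correct and essentially identical to the paper's own argument: the paper rewrites $g_r(o,x)$ as $g_t(o,x)+A\int_t^r s/V(s)\,ds$ and then uses $g_t(o,x)=0$ on $\partial\Delta(t)$, which is exactly your direct substitution of $G(o,x)=A\int_t^\infty s/V(s)\,ds$ into the definition of $g_r(o,x)$.
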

 \begin{proof}    According to the definition of Green function for $\Delta(r),$  it is immediate that  for $r\geq t$
  \begin{eqnarray*} 
g_r(o,x)&=& G(o,x)-A\int_{r}^\infty\frac{t}{V(t)}dt   \\
&=& G(o,x)-A\int_{t}^\infty\frac{s}{V(s)}ds +A\int_{t}^r\frac{s}{V(s)}ds  \\
 &=& g_t(o,x)+A\int_{t}^r\frac{s}{V(s)}ds.   
   \end{eqnarray*}
 Since
 $g_t(o,x)=0$ for  $x\in\partial\Delta(t),$
  we have the lemma proved.
 \end{proof}

 Let $\nabla$ denote the gradient operator on any Riemannian manifold.  Cheng-Yau \cite{C-Y} proved the following lemma.

 \begin{lemma}\label{CY}
 Let $N$ be a complete Riemannian manifold.  Let   $B(x_0, r)$ be a geodesic ball centered at $x_0\in N$ with radius $r.$ 
 Then, there exists  a constant $c_N>0$ depending only on the dimension of $N$ such that 
 $$\frac{\|\nabla u(x)\|}{u(x)}\leq \frac{c_N r^2}{r^2-d(x_0, x)^2}\Big(|\kappa(r)|+\frac{1}{d(x_0, x)}\Big)$$
 holds for any  non-negative  harmonic function $u$ on $B(x_0, r),$ 
 where $d(x_0, x)$ is the Riemannian distance between $x_0$ and $x,$ and $\kappa(r)$ is the lower bound of  Ricci curvature of  $B(x_0, r).$
 \end{lemma}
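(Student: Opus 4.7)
The plan is to follow the classical Cheng--Yau argument based on the Bochner identity applied to $v := \log u$. Since $u>0$ is harmonic, a direct computation gives $\Delta v = -\|\nabla v\|^2$, so that setting $w := \|\nabla v\|^2 = \|\nabla u\|^2/u^2$ and writing $n_0 = \dim N$, the Bochner formula together with the Ricci lower bound $\mathrm{Ric} \ge -(n_0-1)|\kappa(r)|$ on $B(x_0,r)$ yields
$$\tfrac{1}{2}\Delta w \;\ge\; \tfrac{w^2}{n_0} - \langle \nabla v, \nabla w\rangle - (n_0-1)|\kappa(r)|\,w,$$
where one uses $\|\mathrm{Hess}\,v\|^2 \ge (\Delta v)^2/n_0 = w^2/n_0$ together with $\mathrm{Hess}\,v(\nabla v,\nabla v) = \tfrac{1}{2}\langle\nabla v,\nabla w\rangle - \|\nabla v\|^4$ absorbed into the Schwarz inequality.

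Next, I would introduce a radial cutoff $\phi(x) = F\bigl(d(x_0,x)^2/r^2\bigr)$ with $F:[0,1]\to[0,1]$ smooth, $F(0)=1$, $F(1)=0$, chosen so that $\phi$ behaves like $(1-d^2/r^2)^{a}$ near the boundary for a suitable exponent $a$ to be tuned, and consider the function $\Phi := \phi \cdot w$. Since $\Phi$ vanishes on $\partial B(x_0,r)$ and is continuous up to the boundary, it attains its maximum at an interior point $x^*$, where $\nabla\Phi(x^*)=0$ and $\Delta\Phi(x^*)\le 0$. To control $\Delta\phi$ at $x^*$, I would invoke the Laplacian comparison theorem under the Ricci lower bound, which bounds $\Delta d(x_0,\cdot)$ from above in terms of $|\kappa(r)|$ and the inverse distance $1/d(x_0,x)$ (with the usual $1/d$-singularity at $x_0$). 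Expanding $\Delta(\phi w)\le 0$ and rearranging via the differential inequality above yields, at $x^*$, a quadratic inequality in $w(x^*)$ of the schematic form
$$\tfrac{\phi(x^*)}{n_0}\,w(x^*)^2 \;\le\; C_{n_0}\Bigl(|\kappa(r)| + \tfrac{1}{d(x_0,x^*)} + \tfrac{r^2}{(r^2-d(x_0,x^*)^2)^{?}}\Bigr)\,\phi(x^*)\,w(x^*),$$
from which $w(x^*)$ is extracted and then transferred to an arbitrary $x\in B(x_0,r)$ via $\phi(x)w(x)\le \phi(x^*)w(x^*)$ and division by $\phi(x)$.

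The principal obstacle, which is also where all the analytical weight of the argument lies, is the bookkeeping of the exponent $a$ in the cutoff and the singular terms $1/d(x_0,x^*)$ from the Laplacian comparison, so that after solving the quadratic and dividing by $\phi$, the resulting pointwise bound on $\sqrt{w(x)} = \|\nabla u(x)\|/u(x)$ takes precisely the stated form with the blow-up factor $r^2/(r^2-d(x_0,x)^2)$ and the linear-in-$|\kappa(r)|$ term (the factor $|\kappa(r)|$ rather than $\sqrt{|\kappa(r)|}$ is accommodated by the extra $r^2/(r^2-d^2)$). Once this algebraic tuning is accomplished, taking $a$ (say $a=2$) large enough that the cutoff derivatives have the right sign structure at $x^*$ concludes the proof. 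Since the statement is cited from Cheng--Yau \cite{C-Y}, one may alternatively just quote their Theorem~6 and deduce the localized form by a standard rescaling argument applied to the ball $B(x_0,r)$ and its subballs.
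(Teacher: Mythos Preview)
The paper does not prove this lemma at all: it is stated with the attribution ``Cheng--Yau \cite{C-Y} proved the following lemma'' and then simply quoted, with no argument given. So there is nothing in the paper to compare your proposal against; your closing remark (``one may alternatively just quote their Theorem~6'') is in fact exactly what the paper does.

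As for the sketch itself, it is the standard outline of the Cheng--Yau gradient estimate and is broadly correct in spirit. One small caution: the precise form of the right-hand side here, with $|\kappa(r)|$ (rather than $\sqrt{|\kappa(r)|}$) and the additive $1/d(x_0,x)$ term, is a specific packaging of the estimate, and your parenthetical remark about ``the factor $|\kappa(r)|$ rather than $\sqrt{|\kappa(r)|}$ is accommodated by the extra $r^2/(r^2-d^2)$'' is not quite right as stated---that prefactor does not by itself convert a $\sqrt{|\kappa|}$ into a $|\kappa|$. If you actually intend to supply a proof rather than cite \cite{C-Y}, you would need to track the constants more carefully at the quadratic-inequality step to land on exactly this form; otherwise, citing the original suffices, which is what the paper does.
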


 \begin{theorem}\label{hh} There exists a constant  $c_1>0$   such that 
 $$\|\nabla g_r(o,x)\|\leq \frac{c_1}{r}\int_{r}^\infty\frac{tdt}{V(t)}, \ \ \ \    ^\forall x\in\partial\Delta(r).$$
\end{theorem}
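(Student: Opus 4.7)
The plan is to reduce the estimate for $\nabla g_r(o,x)$ to a gradient estimate for the global Green function $G(o,\cdot)$, and then to invoke Cheng--Yau's Lemma~\ref{CY}. Since $g_r(o,x)=G(o,x)-A\int_r^\infty t/V(t)\,dt$ differs from $G(o,x)$ by a constant in $x$, we have $\nabla g_r(o,x)=\nabla G(o,x)$, so the whole task is to bound $\|\nabla G(o,x)\|$ from above on the level set $\partial\Delta(r)$.

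First I would record a lower bound on $\rho(x)$ for $x\in\partial\Delta(r)$. By definition $G(o,x)=A\int_r^\infty t/V(t)\,dt$ on $\partial\Delta(r)$, while the Li--Yau lower bound in $(\ref{Gr})$ gives $G(o,x)\geq A\int_{\rho(x)}^\infty t/V(t)\,dt$. Comparing these yields $\int_{\rho(x)}^\infty t/V(t)\,dt\leq\int_r^\infty t/V(t)\,dt$, hence $\rho(x)\geq r$. In particular $o$ is well separated from $x$.

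Next I would apply Lemma~\ref{CY} to $u=G(o,\cdot)$, which is positive and harmonic on $M\setminus\{o\}$. Pick a point $x_0$ on the minimizing geodesic from $o$ to $x$ with $d(x_0,x)=\rho(x)/4$, and set $R=\rho(x)/2$. Then $d(o,x_0)\geq\rho(x)-\rho(x)/4=3\rho(x)/4>R$, so $o\notin B(x_0,R)$ and $G(o,\cdot)$ is a non-negative harmonic function on $B(x_0,R)$. Since $M$ has non-negative Ricci curvature, $\kappa(R)=0$, and Lemma~\ref{CY} gives
\begin{equation*}
\frac{\|\nabla G(o,x)\|}{G(o,x)}\leq \frac{c_N R^2}{R^2-(\rho(x)/4)^2}\cdot\frac{1}{\rho(x)/4}
=\frac{16\,c_N}{3\,\rho(x)}.
\end{equation*}
Combining this with $\rho(x)\geq r$ and the boundary identity $G(o,x)=A\int_r^\infty t/V(t)\,dt$ yields
\begin{equation*}
\|\nabla g_r(o,x)\|=\|\nabla G(o,x)\|\leq \frac{16\,c_N}{3\,r}\cdot A\int_r^\infty\frac{t}{V(t)}\,dt,
\end{equation*}
which is the desired inequality with $c_1=16c_N A/3$.

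The only delicate point is the choice of the auxiliary center $x_0$: we must pick $x_0$ far enough from $x$ so that $d(x_0,x)$ appears nontrivially in the denominator of the Cheng--Yau estimate, yet close enough to $x$ so that the ball $B(x_0,R)$ still avoids $o$. Placing $x_0$ at distance $\rho(x)/4$ from $x$ on the geodesic to $o$, with radius $R=\rho(x)/2$, balances these two requirements; once this is set, the rest is a mechanical substitution using ${\rm Ric}\geq 0$ and the lower bound $\rho(x)\geq r$ established on the boundary.
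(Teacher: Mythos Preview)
Your proof is correct and follows essentially the same route as the paper: reduce to $\nabla G$, apply the Cheng--Yau gradient estimate to get $\|\nabla G(o,x)\|\leq c_0\,G(o,x)/\rho(x)$, and then use $(\ref{Gr})$ to show $\rho(x)\geq r$ on $\partial\Delta(r)$. The only cosmetic difference is that you spell out an explicit choice of center and radius in Lemma~\ref{CY} and use the exact boundary value $G(o,x)=A\int_r^\infty t/V(t)\,dt$, whereas the paper invokes the upper Li--Yau bound $G(o,x)\leq B\int_{\rho(x)}^\infty t/V(t)\,dt$ (ending with $c_1=c_0B$ instead of your $16c_NA/3$).
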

\begin{proof} By the curvature assumption,   ${\rm{Ric}}_M\geq0.$  It follows  from Lemma \ref{CY} and (\ref{Gr})  that   (see Remark 5 in \cite{Sa0} also)
  \begin{eqnarray*}
\|\nabla G(o, x)\|\leq\frac{c_0}{\rho(x)}G(o,x) 
\leq \frac{c_0B}{\rho(x)}\int_{\rho(x)}^\infty\frac{tdt}{V(t)}
  \end{eqnarray*}
 for some large  constant $c_0>0$ which depends  only on  the dimension  $m.$  By  (\ref{Gr})   again 
 $$\int_{\rho(x)}^\infty\frac{tdt}{V(t)}\leq \int_r^\infty \frac{tdt}{V(t)}, \ \ \  \ ^\forall x\in \partial\Delta(r),$$
 which  gives    
 $$\rho(x)\geq r, \ \ \ \     ^\forall x\in\partial\Delta(r).$$
Set $c_1=c_0B,$ then we conclude that 
$$\|\nabla g_r(o, x)\|\big|_{\partial\Delta(r)}=\|\nabla G(o, x)\|\big|_{\partial\Delta(r)}\leq \frac{c_1}{r}\int_{r}^\infty\frac{tdt}{V(t)}.$$
  \end{proof}
   
   By
   $$\frac{\partial g_r(o,x)}{\partial{\vec{\nu}}}=\|\nabla g_r(o, x)\|,$$
we  obtain   an estimate for  the upper bound of  $d\pi_r$  as follows. 
  \begin{cor}\label{bbbq}  There exists a constant  $c_2>0$   such that 
   $$d\pi_r\leq \frac{c_2}{r}\int_{r}^\infty\frac{tdt}{V(t)}d\sigma_r,$$
 where  $d\sigma_{r}$ is the Riemannian area element of 
$\partial \Delta(r).$
\end{cor}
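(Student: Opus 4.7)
The plan is to derive this upper bound on $d\pi_r$ as an immediate consequence of Theorem \ref{hh}, which controls $\|\nabla g_r(o,x)\|$ on $\partial\Delta(r)$. The key observation is that the definition of the harmonic measure,
$$d\pi_r(x) = \frac{1}{2}\frac{\partial g_r(o,x)}{\partial\vec\nu}\,d\sigma_r(x),$$
involves the inward normal derivative of $g_r$, and on $\partial\Delta(r)$ this derivative coincides (up to sign conventions already baked into the setup) with $\|\nabla g_r(o,x)\|$, as indicated in the line of the text preceding the corollary statement.

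First I would invoke the identity $\partial g_r(o,x)/\partial\vec\nu = \|\nabla g_r(o,x)\|$ on $\partial\Delta(r)$, which holds because $g_r(o,\cdot)$ vanishes on $\partial\Delta(r)$ and is positive in the interior, so its gradient on the boundary points in the inward normal direction and its magnitude is the inward normal derivative. Then I would substitute directly into the formula for $d\pi_r$ to obtain
$$d\pi_r = \frac{1}{2}\|\nabla g_r(o,x)\|\,d\sigma_r.$$

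Finally I would apply the bound from Theorem \ref{hh},
$$\|\nabla g_r(o,x)\| \leq \frac{c_1}{r}\int_{r}^{\infty}\frac{t\,dt}{V(t)}, \qquad x\in\partial\Delta(r),$$
and set $c_2 = c_1/2$ to conclude. There is no real obstacle here since the corollary is a one-line consequence of the preceding theorem and the definition of $d\pi_r$; the only point that requires any care is the sign/orientation convention identifying $\partial g_r/\partial\vec\nu$ with $\|\nabla g_r\|$ on $\partial\Delta(r)$, and this has already been justified by the setup in Subsection \ref{sec421}.
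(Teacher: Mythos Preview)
Your proposal is correct and follows exactly the same approach as the paper: the corollary is obtained by combining the identity $\partial g_r(o,x)/\partial\vec\nu = \|\nabla g_r(o,x)\|$ on $\partial\Delta(r)$ with the definition of $d\pi_r$ and the bound from Theorem \ref{hh}, yielding $c_2 = c_1/2$.
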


 Set
 \begin{equation}\label{Hr1}
H(r,\delta)=\frac{1}{r}\left(\frac{V(r)}{r}\right)^{1+\delta}\int_{r}^\infty\frac{tdt}{V(t)}. 
 \end{equation} 

 \begin{theorem}\label{calculus}
Let $k\geq0$ be a locally integrable function on $M.$ Assume that $k$ is locally bounded at $o.$ Then there exists a  constant $C>0$ such that
 for any $\delta>0,$ there exists  a subset $E_{\delta}\subset(0,\infty)$ of finite Lebesgue measure such that
$$\int_{\partial\Delta(r)}kd\pi_r\leq CH(r,\delta)\bigg(\int_{\Delta(r)}g_r(o,x)kdv\bigg)^{(1+\delta)^2}$$
holds for all $r>0$ outside $E_{\delta},$ where $H(r,\delta)$ is given by $(\ref{Hr1}).$  
\end{theorem}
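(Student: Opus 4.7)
The plan is to adapt the proof of Theorem \ref{cal} to the harmonic exhaustion $\{\Delta(r)\}$ constructed in Section \ref{sec421}. Set
$$\Lambda(r) := \int_{\Delta(r)} g_r(o,x) k\, dv, \qquad \eta(r) := \int_{\Delta(r)} k\, dv.$$
The first key step is a Fubini identity. For $x \in \Delta(r)$, Lemma \ref{thm4} gives $g_r(o,x) = A\int_{\lambda(x)}^r (s/V(s))\,ds$, where $\lambda(x)$ is the unique $t$ with $x \in \partial\Delta(t)$. Exchanging the order of integration yields
$$\Lambda(r) = A\int_0^r \frac{s}{V(s)}\eta(s)\,ds, \qquad \text{so} \qquad \eta(r) = \frac{V(r)}{Ar}\Lambda'(r).$$

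Next I estimate $\int_{\partial\Delta(r)} k\,d\pi_r$ in terms of $\eta'(r)$ by combining the two gradient-type estimates available in this setting. Corollary \ref{bbbq} provides $d\pi_r \leq (c_2/r)\int_r^\infty (t/V(t))\,dt\cdot d\sigma_r$. Via the coarea formula for the defining function $\lambda$, together with the implicit identity $\|\nabla \lambda\| = (V(r)/(Ar))\|\nabla G\|$ holding on $\partial\Delta(r)$, one obtains $\eta'(r) = (Ar/V(r))\int_{\partial\Delta(r)}(k/\|\nabla G\|)\,d\sigma_r$. Inserting Theorem \ref{hh}'s bound $\|\nabla G\| \leq (c_1/r)\int_r^\infty (t/V(t))\,dt$ on $\partial\Delta(r)$ then yields
$$\int_{\partial\Delta(r)} k\,d\sigma_r \leq \frac{c_1 V(r)}{Ar^2}\int_r^\infty \frac{t\,dt}{V(t)}\cdot \eta'(r),$$
and consequently
$$\int_{\partial\Delta(r)} k\,d\pi_r \leq \frac{c_1 c_2 V(r)}{Ar^3}\Big(\int_r^\infty \frac{t\,dt}{V(t)}\Big)^2 \eta'(r).$$

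Finally I apply Borel's growth lemma twice. Since $\eta$ is non-decreasing (because $\Delta(s)$ increases in $s$), there is an exceptional set $E_1 \subset (0,\infty)$ of finite Lebesgue measure such that $\eta'(r) \leq \eta(r)^{1+\delta}$ off $E_1$. Since $\Lambda$ is likewise non-decreasing, there is $E_2$ of finite measure with $\Lambda'(r) \leq \Lambda(r)^{1+\delta}$ off $E_2$, yielding $\eta(r) \leq (V(r)/(Ar))\Lambda(r)^{1+\delta}$ off $E_2$. Chaining these estimates off $E_\delta := E_1 \cup E_2$ and collecting the resulting powers of $V(r)/r$ and $\int_r^\infty (t/V(t))\,dt$ produces a bound of the form $CH(r,\delta)\Lambda(r)^{(1+\delta)^2}$ as defined in (\ref{Hr1}), once $C$ is taken sufficiently large.

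The main obstacle I expect is the careful handling of the coarea formula for an exhaustion defined as level sets of the global Green function rather than of the geodesic distance: the change of variables built into (\ref{delta}) forces the identification $\|\nabla\lambda\| = (V(r)/(Ar))\|\nabla G\|$, and it is this factor, combined with the $d\pi_r \leq (c_2/r)\int_r^\infty\cdots\,d\sigma_r$ estimate and the Fubini representation of $\Lambda$, that must be tracked carefully so that the final algebra matches the shape of $H(r,\delta)$ without spurious extra factors.
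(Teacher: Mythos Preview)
Your proposal follows the right outline but picks up an extra, uncontrolled factor. The issue is that you invoke the gradient estimate \emph{twice}: once via Theorem~\ref{hh} to pass from $\eta'(r)$ to $\int_{\partial\Delta(r)}k\,d\sigma_r$, and once more via Corollary~\ref{bbbq} to pass from $d\sigma_r$ to $d\pi_r$. Chasing the constants, your chain of inequalities yields
\[
\int_{\partial\Delta(r)}k\,d\pi_r \;\leq\; \frac{c_1c_2}{A^{2+\delta}}\cdot\frac{V(r)}{r^2}\!\int_r^\infty\!\frac{t\,dt}{V(t)}\cdot H(r,\delta)\,\Lambda(r)^{(1+\delta)^2}
\;=\; C\,H(r)\,H(r,\delta)\,\Lambda(r)^{(1+\delta)^2},
\]
with the extra factor $H(r)=(V(r)/r^2)\int_r^\infty t\,dt/V(t)$ from (\ref{Hr}). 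This is \emph{not} absorbed into a constant: under non-negative Ricci curvature only a lower bound $H(r)\geq 1/(2m-2)$ is guaranteed, and no uniform upper bound is available without further hypotheses. So the closing sentence, ``collecting the resulting powers \ldots\ produces a bound of the form $CH(r,\delta)\Lambda(r)^{(1+\delta)^2}$,'' is where the argument breaks.

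The paper avoids this detour by never introducing the volume integral $\eta(r)$. It writes $\Lambda(r)$ directly as an iterated integral over the level sets,
\[
\Lambda(r)=A\int_0^r\Big(\int_t^r\frac{s}{V(s)}\,ds\Big)\Big(\int_{\partial\Delta(t)}k\,d\sigma_t\Big)\,dt,
\]
so that differentiating twice in $r$ produces $\int_{\partial\Delta(r)}k\,d\sigma_r$ on the nose. Borel's lemma applied twice then controls this surface integral by $(V(r)/r)^{1+\delta}\Lambda(r)^{(1+\delta)^2}$, and a \emph{single} application of Corollary~\ref{bbbq} converts $d\sigma_r$ to $d\pi_r$, giving exactly $H(r,\delta)$. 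The fix to your argument is therefore to drop the intermediate $\eta$ and the use of Theorem~\ref{hh}, and instead differentiate $\Lambda$ directly to reach the surface integral.
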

 
  \begin{proof} 
 Invoking  Lemma  \ref{thm4}, we have   
   \begin{eqnarray*}
 \int_{\Delta(r)}g_r(o,x)kdv 
 &=&\int_0^rdt \int_{\partial\Delta(t)}g_r(o,x)kd\sigma_t \\
    &=&  A\int_0^r\left(\int_{t}^r\frac{s}{V(s)}ds\right)dt \int_{\partial\Delta(t)}kd\sigma_t.
   \end{eqnarray*}
 Set 
 $$\Lambda(r)=A\int_0^{r}\left(\int_{t}^{r}\frac{s}{V(s)}ds\right)dt \int_{\partial\Delta(t)}kd\sigma_t.$$
 A simple computation leads to  
 $$\Lambda'(r)=\frac{d\Lambda(r)}{dr}=\frac{Ar}{V(r)}\int_0^rdt\int_{\partial\Delta(t)}kd\sigma_t.$$
 In further, we have
 $$\frac{d}{dr}\left(\frac{V(r)\Lambda'(r)}{r}\right)=A\int_{\partial\Delta(r)}kd\sigma_r.$$
Apply  Borel's lemma  to the left hand side of the above equality  twice:  one is to $V(r)\Lambda'(r)/r$ and  the other  is to 
 $\Lambda'(r),$ then  we deduce  that for any  $\delta>0,$   there exists a subset $E_\delta\subset(0,\infty)$ of finite Lebesgue measure such that 
$$ \int_{\partial\Delta(r)}kd\sigma_r\leq \frac{1}{A}\left(\frac{V(r)}{r}\right)^{1+\delta}\Lambda(r)^{(1+\delta)^2}
$$ holds for all $r>0$ outside $E_\delta.$  
  Moreover,  Corollary \ref{bbbq} implies that there exists a constant $c>0$ such that
  $$d\pi_r\leq \frac{c}{r}\int_{r}^\infty\frac{tdt}{V(t)}d\sigma_r.$$
Set $C=c/A.$ Combining the above, we have 
   \begin{eqnarray*}
 \int_{\partial\Delta(r)}kd\pi_r&\leq& \frac{c}{Ar}\left(\frac{V(r)}{r}\right)^{1+\delta}\int_{r}^\infty\frac{tdt}{V(t)}\Lambda(r)^{(1+\delta)^2} \\
 &=&CH(r,\delta)\Lambda(r)^{(1+\delta)^2}.
   \end{eqnarray*}
 holds for all $r>0$ outside $E_\delta.$  
 \end{proof}

  \begin{theorem}\label{main2}  Let $M$ be a  non-parabolic 
     complete non-compact  K\"ahler manifold   with non-negative  Ricci curvature.  
Let $(N,\omega)$ be a  compact  K\"ahler  manifold of complex dimension not greater than  that  of $M.$
 Let $D_1,\cdots,D_q$ be effective  divisors in general position on $N$ with that each  $D_j$ is cohomologous to  $\omega.$
  Let  $f:M\rightarrow N$ be a  differentiably non-degenerate meromorphic mapping. 
    Assume that $q\omega-{\rm Ric}(\omega^n)>0.$
   Then  for any  $\delta>0,$ there exists a subset $E_\delta\subset(1, \infty)$ of finite Lebesgue measure such that 
        \begin{eqnarray*}
&&qT_f(r,\omega)+T_f(r, K_N)+T(r, \mathscr R) \\
&\leq& \sum_{j=1}^q\overline N_f(r,D_j)+O\big(\log T_{f}(r, \omega)+\log H(r)+\delta\log r\big)
         \end{eqnarray*}
    holds for all  $r>1$ outside $E_\delta,$  where $H(r)$ is defined by $(\ref{Hr}).$          
\end{theorem}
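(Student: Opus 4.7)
The plan is to mirror the proof of Theorem \ref{main1} with the necessary modifications to accommodate the different exhaustion $\{\Delta(r)\}_{r>0}$ constructed in Section \ref{sec421}, and to replace the calculus estimate (Corollary \ref{cal1}) by its counterpart (Theorem \ref{calculus}) adapted to the non-parabolic, non-negatively Ricci-curved setting. First, I introduce the non-negative function $\xi$ on $M$ defined by $f^*\Psi \wedge \alpha^{m-n} = \xi\,\alpha^m$, and apply the Poincar\'e-Lelong formula to express $dd^c[\log\xi]$ as the sum $qf^*\omega + f^*K_N + \mathscr R - f^*[D] + [D_{f,\rm ram}] + 2\sum_{j=1}^{q}dd^c\log(u_{D_j}\circ f)$, exactly as in the derivation of (\ref{gjj}). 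Integrating against the Green function $g_r(o,x)$ of $\Delta/2$ for $\Delta(r)$ and invoking the Jensen-Dynkin formula together with the first main theorem (Theorem \ref{first}), I obtain the lower bound
\[
\frac{1}{2}\int_{\partial\Delta(r)}\log\xi\,d\pi_r \geq qT_f(r,\omega)+T_f(r,K_N)+T(r,\mathscr R)-\overline{N}_f(r,D)+O\bigl(\log T_f(r,\omega)\bigr),
\]
where the $O(\log T_f(r,\omega))$ term absorbs the logarithmic-derivative contributions from $dd^c\log(u_{D_j}\circ f)$ via subharmonicity of $u_{D_j}+\log|\tilde s_{D_j}|^2$ and of $\log|\tilde s_{D_j}|^2$.

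Second, using Lemma \ref{grii}(b) together with a pointwise computation in normal holomorphic coordinates that is identical to the one in the proof of Theorem \ref{main1}, I get the pointwise bound $\xi^{1/n} \leq -C_0\,f^*{\rm Ric}\,\Psi\wedge\alpha^{m-1}/\alpha^m$ for a constant $C_0>0$. Applying Theorem \ref{calculus} to $k=\xi^{1/n}$ (noting that $k$ is locally bounded at $o$ since $f$ is differentiably non-degenerate away from an analytic set), combined with Jensen's inequality for the concave function $\log$ against the probability measure $d\pi_r$, then yields, outside a set of finite Lebesgue measure,
\[
\int_{\partial\Delta(r)}\log\xi\,d\pi_r \leq n(1+\delta)^2\log T_f(r,{\rm Ric}\,\Psi)+\log H(r,\delta)+O(1).
\]
The hypothesis $q\omega-{\rm Ric}(\omega^n)>0$ gives $T_f(r,{\rm Ric}\,\Psi)=O(T_f(r,\omega))$. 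A direct comparison of (\ref{Hr}) and (\ref{Hr1}) shows $H(r,\delta)=H(r)\,(V(r)/r)^{\delta}$, and Bishop-Gromov volume comparison (which uses the non-negativity of Ricci curvature essentially) yields $V(r)\leq C r^{2m}$ for $r\geq 1$, so that $(V(r)/r)^{\delta}\leq C^{\delta}r^{(2m-1)\delta}$ and hence $\log H(r,\delta)\leq \log H(r)+O(\delta\log r)$. Combining the resulting upper bound with the lower bound from the previous paragraph yields the claimed inequality.

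The main obstacle will be confirming that the Jensen-Dynkin formula (Lemma \ref{dynkin}) and the first main theorem (Theorem \ref{first}) hold verbatim on the non-standard exhaustion $\{\Delta(r)\}$. Once one observes that their proofs depend only on having a regular precompact exhaustion whose Dirichlet Green function is $g_r(o,\cdot)$ and whose harmonic measure is $d\pi_r$, both of which are precisely constructed in Section \ref{sec421}, the derivations transfer without modification. A secondary technical point is the replacement of the exponential factor $e^{\sqrt{-\kappa(r)}r}$ from the non-positively curved case by the polynomial volume-comparison factor encoded in $\log H(r)$, which must be absorbed cleanly into the error term; this is where the Bishop-Gromov estimate enters, and it is the only place where the non-negativity of Ricci curvature is used beyond ensuring that $M$ admits the minimal positive Green function $G(o,x)$ with the Li-Yau two-sided bound (\ref{Gr}).
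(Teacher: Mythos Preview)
Your proposal is correct and follows precisely the route the paper indicates: the paper's own proof of Theorem \ref{main2} is the single sentence ``Applying the similar arguments as in the proof of Theorem \ref{main1} and using Theorem \ref{calculus}, the theorem can be proved,'' and you have spelled out exactly that transfer, including the Bishop--Gromov step needed to reduce $\log H(r,\delta)$ to $\log H(r)+O(\delta\log r)$, which the paper leaves implicit.
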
  
\begin{proof}
Applying  the similar  arguments as in the proof of Theorem \ref{main1} and using Theorem \ref{calculus},   the theorem can be proved.
\end{proof}

\section{Equidistribution of Algebroid  Mappings}

\subsection{First Main Theorem}~\label{61}

Let $(M, \alpha)$ be an $m$-dimensional complete non-compact K\"ahler manifold  with 
 Laplace-Beltrami operator $\Delta,$ and $(N, \omega)$ a  K\"ahler projective manifold of  $\dim_{\mathbb C}N=n\leq m.$
Let $F: M\to N$ be a $\nu$-valued algebroid mapping with $\nu$ distinct single-valued  components $F_1,\cdots, F_\nu.$ Set
$$F^*\omega=\sum_{j=1}^\nu F_j^*\omega, \ \ \ \   F^*u_D=\sum_{j=1}^\nu F_j^*u_D.$$
\ \ \ \  Given  a family $\{\Delta(r)\}_{r>0}$ of   precompact domains $\Delta(r)\subset M$ with   smooth boundaries $\partial\Delta(r)$ which  can exhaust  $M.$
       Fix a reference  point $o\in \Delta(r).$ We use  $g_{r}(o, x)$   to stand for  
              the  Green function of $\Delta/2$ for $\Delta(r)$ with a pole at $o$ satisfying Dirichlet boundary condition, and then  denote  by  $\pi_r$ 
               the harmonic measure on $\partial\Delta(r)$ with respect to $o.$

          We  shall define  Nevanlinna's functions of $F.$       Let  $D$ be  an effective  divisor on $N$  cohomologous to  $\omega,$  i.e., there exists a function $u_D\geq0$ on $N$ such that 
                             $\omega-[D]=2dd^cu_D$ in the sense of currents. 
              The characteristic function of $F$ with respect to $\omega$ is defined by
$$T_F(r,\omega)=\frac{\pi^m}{(m-1)!\nu}\int_{\Delta(r)}g_r(o,x)F^*\omega\wedge\alpha^{m-1}.$$
    The proximity  function of $F$ with respect to $D$ is defined  by
$$ m_F(r, D)=\frac{1}{\nu}\int_{\partial\Delta(r)} F^*u_Dd\pi_{r}.$$
The counting function   of  $F$ with respect to $D$ is defined by
$$  N_F(r, D)=\frac{\pi^m}{(m-1)!\nu}\int_{F^*D\cap\Delta(r)}g_{r}(o,x)\alpha^{m-1}. 
$$

\begin{theorem}[First Main Theorem]\label{}  Assume that $F(o)\not\in{\rm Supp}D.$ Then  
$$T_F(r, \omega)+O(1)=m_F(r, D)+N_F(r, D).$$
\end{theorem}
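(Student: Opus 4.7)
The plan is to mimic the proof of Theorem 5.1 (the meromorphic First Main Theorem) componentwise and to average the result over the $\nu$ single-valued branches of $F$. Although the individual components $F_1,\dots,F_\nu$ of $F$ are only defined locally on $M\setminus\mathscr B_F$ (they get permuted by monodromy around the branch set), the symmetric combination
$$\Phi:=F^*u_D=\sum_{j=1}^\nu u_D\circ F_j$$
is a globally well-defined function on $M$. As in the proof of Theorem 5.1, $u_D$ is the difference of two subharmonic functions on $N$ (because both $\log|\tilde s_D|^2$ and $u_D+\log|\tilde s_D|^2$ are plurisubharmonic), so each $u_D\circ F_j$ is locally the difference of two subharmonic functions, and hence so is $\Phi$. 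Its polar set is contained in $F^{-1}(\mathrm{Supp}\,D)$, and the hypothesis $F(o)\notin\mathrm{Supp}\,D$ ensures $\Phi(o)$ is finite.

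Next I would apply the Jensen--Dynkin formula (Lemma 5.1) to $\Phi/\nu$:
$$\frac{1}{\nu}\int_{\partial\Delta(r)}\Phi\,d\pi_r-\frac{\Phi(o)}{\nu}=\frac{1}{2\nu}\int_{\Delta(r)}g_r(o,x)\Delta\Phi\,dv.$$
The left-hand side equals $m_F(r,D)-\Phi(o)/\nu$ by the definition of the proximity function.

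For the right-hand side, I would invoke the Poincar\'e--Lelong identity on each branch: since $\omega-[D]=dd^c u_D$, pulling back by $F_j$ gives $dd^c(u_D\circ F_j)=F_j^*\omega-F_j^*[D]$ in the sense of currents. Summing over $j=1,\dots,\nu$ yields $dd^c\Phi=F^*\omega-[F^*D]$, where $F^*D=\sum_{j=1}^\nu F_j^*D$. Converting the Laplacian to its K\"ahler-form expression, as in the proof of Theorem 5.1,
$$\frac{1}{2\nu}\int_{\Delta(r)}g_r(o,x)\Delta\Phi\,dv=\frac{\pi^m}{(m-1)!\,\nu}\int_{\Delta(r)}g_r(o,x)dd^c\Phi\wedge\alpha^{m-1}=T_F(r,\omega)-N_F(r,D).$$
Rearranging gives $T_F(r,\omega)+\Phi(o)/\nu=m_F(r,D)+N_F(r,D)$, and since the constant $\Phi(o)/\nu$ is finite and independent of $r$, the claimed $O(1)$ identity follows.

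The main point that needs care is justifying the Jensen--Dynkin formula for $\Phi$ globally on $M$, in spite of the branching of $F$. This is acceptable because $\mathscr B_F$ is an analytic subset of complex codimension at least one, hence polar, and $\Phi$---being a finite symmetric sum of local differences of subharmonic functions---is a bona fide function on $M$ whose singular locus is contained in $F^{-1}(\mathrm{Supp}\,D)\cup\mathscr B_F$ (also a polar set), so Lemma 5.1 applies exactly as in the meromorphic case. All other steps are routine averagings of the single-valued argument.
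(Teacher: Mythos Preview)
Your proof is correct and follows essentially the same route as the paper's own argument: apply the Jensen--Dynkin formula to $F^*u_D=\sum_j u_D\circ F_j$ (whose singular set is polar) and repeat the computation from Theorem~\ref{first} with the extra factor $1/\nu$. Your explicit remark that the symmetric sum $\Phi$ is globally well-defined on $M$ despite the monodromy of the individual branches is a clarification the paper leaves implicit, but otherwise the two proofs coincide.
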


\begin{proof} 
Since  the set of singularities  of $F^*u_D$ is polar, 
  Green-Dynkin formula can  apply  to each  $F_j^*u_D,$ and thus to $F^*u_D.$  Using the similar arguments as in the proof of Theorem \ref{first},    we can prove the theorem.  
\end{proof}

\subsection{Counting Function of Branch Divisors}~

The counting function of  branch divisor $\mathscr D_F$ of $F$  is defined by
$$N_{{\rm{bran}}}(r, F)=\frac{1}{\nu}N(r,\mathscr D_F),$$ where 
$$N(r,\mathscr D_F)=\frac{\pi^m}{(m-1)!}\int_{\mathscr D_F\cap\Delta(r)}g_{r}(o,x)\alpha^{m-1}.$$
Recall that $J_F$ is defined by (\ref{JJ}).  Set 
$$N\big(r, (J_F=0)\big) = \frac{\pi^m}{(m-1)!}\int_{(J_F=0)\cap\Delta(r)}g_r(o,x)\alpha^{m-1}.$$
 
 Recall that   
$\jmath: N\hookrightarrow\mathbb P^d(\mathbb C)$ is  the  holomorphic   embedding, and $\omega_{FS}|_N$ is the restriction of $\omega_{FS}$ to $N.$
 
  \begin{theorem}[Estimate of branch divisors]\label{bran} We have 
$$N_{\rm{bran}}(r, F)\leq (2\nu-2)\left[\frac{\omega_{FS}|_N}{\omega}\right]T_F(r, \omega)+O(1).$$
 \end{theorem}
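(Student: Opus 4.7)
The plan is to bound $N(r,\mathscr{D}_F)$ via $N(r,(J_k=0))$ using Theorem \ref{esti}, and then estimate $N(r,(J_k=0))$ by interpreting $J_k$ as a pullback section and applying the first main theorem factor-by-factor. Fix any $k\in\{1,\ldots,d\}$. By Theorem \ref{esti} one has $\mathscr{D}_F\le(J_k=0)$, so
$$N_{\rm bran}(r,F)=\frac{1}{\nu}N(r,\mathscr{D}_F)\le\frac{1}{\nu}N(r,(J_k=0)),$$
reducing the task to bounding $N(r,(J_k=0))$.

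The key step is a line-bundle interpretation of the factors of $J_k$. Consider the bihomogeneous form $\sigma_k(z,w):=z_k w_0 - z_0 w_k$ on $\mathbb P^d(\mathbb C)\times\mathbb P^d(\mathbb C)$; this is a global section of $\mathcal O(1)\boxtimes\mathcal O(1)$. A direct computation yields $(F_i\times F_j)^*\sigma_k = f_{ik}f_{j0}-f_{jk}f_{i0}$, so each elementary factor appearing in
$$J_k=\prod_{1\le i<j\le\nu}(f_{ik}f_{j0}-f_{jk}f_{i0})^2$$
is a section of $F_i^*\mathcal O(1)\otimes F_j^*\mathcal O(1)$ whose zero divisor on $M$ is cohomologous to $F_i^*(\omega_{FS}|_N)+F_j^*(\omega_{FS}|_N)$. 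The first main theorem (Theorem \ref{first}), applied to the product $F_i\times F_j:M\to N\times N$ with the divisor $(\sigma_k=0)|_{N\times N}$, then gives
$$N\bigl(r,(f_{ik}f_{j0}-f_{jk}f_{i0}=0)\bigr)\le T_{F_i}(r,\omega_{FS}|_N)+T_{F_j}(r,\omega_{FS}|_N)+O(1).$$

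Summing over all $\binom{\nu}{2}$ pairs with the factor $2$ from the squaring, and using that each index $i$ appears in exactly $\nu-1$ such pairs, I obtain
$$N(r,(J_k=0))\le 2(\nu-1)\sum_{i=1}^\nu T_{F_i}(r,\omega_{FS}|_N)+O(1).$$
The pointwise comparison $\omega_{FS}|_N\le [\omega_{FS}|_N/\omega]\,\omega$ on $N$, combined with the identity $\sum_{i=1}^\nu T_{F_i}(r,\omega)=\nu\,T_F(r,\omega)$ coming from the definition $F^*\omega=\sum_i F_i^*\omega$, then delivers
$$\frac{1}{\nu}N(r,(J_k=0))\le(2\nu-2)\left[\frac{\omega_{FS}|_N}{\omega}\right]T_F(r,\omega)+O(1),$$
which is precisely the asserted bound.

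The main obstacle I foresee is the line-bundle bookkeeping in the pullback identification: one must verify carefully that each bilinear factor $f_{ik}f_{j0}-f_{jk}f_{i0}$ really represents the pullback of $\sigma_k$ despite the meromorphic nature of the single-valued components $F_i$, so that the indeterminacy loci of the representatives $[f_{i0}:\cdots:f_{id}]$ (of complex codimension at least $2$) contribute nothing to the global counting function. Once this is absorbed, the summation over pairs and the first main theorem deliver the estimate immediately.
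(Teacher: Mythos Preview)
Your argument is correct and follows the same overall strategy as the paper: reduce via Theorem~\ref{esti} to bounding $N(r,(J_k=0))$, control this by the characteristic functions $T_{F_i}(r,\omega_{FS}|_N)$ of the single-valued components, and finish with the combinatorial count and the comparison $\omega_{FS}|_N\le[\omega_{FS}|_N/\omega]\,\omega$. The only difference is in how the middle step is executed. The paper keeps $J_k$ in the form $(f_{10}\cdots f_{\nu0})^{2\nu-2}\prod_{i<j}(g_{ik}-g_{jk})^2$, applies Jensen--Dynkin to $\log|J_k|$, bounds the resulting boundary term via $\log|g_{ik}-g_{jk}|\le\log^+|g_{ik}|+\log^+|g_{jk}|+O(1)$, and then assembles the classical identity $T(r,g_{ik})=m(r,g_{ik})+N(r,g_{ik})$ before invoking $T(r,g_{ik})\le T_{\jmath\circ F_i}(r,\omega_{FS})$. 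Your bihomogeneous rewriting $J_k=\prod_{i<j}(f_{ik}f_{j0}-f_{jk}f_{i0})^2$ together with a direct appeal to the first main theorem for the product map $\jmath\circ F_i\times\jmath\circ F_j$ into $\mathbb P^d(\mathbb C)\times\mathbb P^d(\mathbb C)$ is a cleaner, more geometric packaging of exactly the same computation; it bypasses the explicit $m+N$ decomposition at the cost of invoking Theorem~\ref{first} on a product target. Neither route gains or loses anything substantive---your version simply hides the Jensen--Dynkin manipulation inside the black box of the first main theorem. The codimension-$2$ indeterminacy issue you flag is indeed harmless for the counting function, as you note.
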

\begin{proof}  From (\ref{JJ}), we have  
   \begin{eqnarray*} 
   N\big(r, (J_\psi=0)\big)  
 &=&  \frac{1}{4}\int_{\Delta(r)}g_r(o,x)\Delta\log|J_F|^2dv \\
   &=&  \frac{\nu-1}{2}\int_{\Delta(r)}g_r(o,x)\Delta\log|f_{10}\cdots f_{\nu0}|^2dv \\
   &&+ \frac{1}{4}\int_{\Delta(r)}g_r(o,x)\Delta\log\prod_{1\leq i<j\leq\nu}\big|g_{ik}-g_{jk}\big|^4dv \\
   &=& \frac{\nu-1}{2}\sum_{i=1}^\nu\int_{\Delta(r)}g_r(o,x)\Delta\log|f_{i0}|^2dv \\ 
   &&+ \sum_{1\leq i<j\leq\nu}\int_{\partial\Delta(r)}\log|g_{ik}-g_{jk}|^2d\pi_{r}+O(1).
      \end{eqnarray*}
Using Green-Dynkin formula, we further obtain
   \begin{eqnarray*}
     N\big(r, (J_\psi=0)\big)      &\leq& (2\nu-2)\sum_{i=1}^\nu N(r, g_{ik})+(2\nu-2)\sum_{i=1}^\nu m(r, g_{ik})+O(1) \\
   &=&  (2\nu-2)\sum_{i=1}^\nu T(r, g_{ik})+O(1) \\
     &\leq& (2\nu-2)\sum_{i=1}^\nu T_{\jmath\circ F_i}(r, \omega_{FS})+O(1) \\
   &\leq& \nu(2\nu-2)\left[\frac{\omega_{FS}|_N}{\omega}\right] T_F(r, \omega)+O(1).
   \end{eqnarray*}
By this, we conclude  from Theorem \ref{esti} that 
   \begin{eqnarray*} 
N_{\rm{bran}}(r, F)&\leq& \frac{1}{\nu}N\big(r,(J_\psi=0)\big) \\
&\leq& (2\nu-2)\left[\frac{\omega_{FS}|_N}{\omega}\right]T_F(r, \omega)+O(1).
   \end{eqnarray*} 
\end{proof}

\subsection{Second Main Theorem}~\label{63}

 Let $\mathcal M_F$ be the $\nu$-leaf complex manifold by  a $\nu$-valued algebroid mapping  $F: M\to N.$ 
Let $\pi: \mathcal M_F\to M$ be the  natural  projection.  Equip  $\mathcal M_F$ with the  pullback metric $\pi^*g$ induced by $g$ on $M.$
 Note that  $\pi^*g$ is a positive  semi-definite K\"ahler metric  on $\mathcal M_F,$ i.e.,  
 $\pi^*g$  is a  K\"ahler  metric    on  $\mathcal M_F\setminus\pi^{-1}(\mathscr B_F)$ and  degenerate on $\pi^{-1}(\mathscr B_F).$
      Since   $\mathscr B_F$ is   a polar  set of ${\rm codim}_{\mathbb C}\mathscr B_F=1,$  the  Green-Dynkin formula still  works on each leaf of $(\mathcal M_F, \pi^*g)$  for  the following reasons. 
 Set $K=\pi^{-1}(\mathscr B_F).$   Take    a sequence $\{U_n(K)\}_{n=1}^\infty$ of  neighborhoods $U_n(K)$ of $K$ such that 
$$\cdots\subseteq \overline{U_{k+1}(K)}\subseteq U_k(K)\subseteq\cdots\subseteq \overline{U_2(K)}\subseteq U_1(K), \ \ \ \   \bigcap_{n\geq1}U_n(K)=K.$$
\ \ \ \  Let $\{\tau_{n}, \theta_{n}\}$ be a partition  of the unity   subordinate to the open covering $\{\mathcal M_F\setminus U_{n+1}(K), U_n(K)\}$ of $\mathcal M_F.$
So,  $0\leq\tau_n\leq1, 0\leq\theta_n\leq1$ and $\tau_n+\theta_n=1.$ 
Fix  a  K\"ahler metric $g_0$ on $\mathcal M_F$ with K\"ahler form $\omega_0.$  
  Set $g_n=\pi^*g+n^{-2}\theta_{n}g_0.$  Since $g_0$ is  positive  definite on $\mathcal M_F,$ we see  that $g_n$ is also  positive  definite on $\mathcal M_F.$  Moreover, since 
   $d(\pi^*\omega+n^{-2}\theta_{n}\omega_0)=\pi^*d\omega+n^{-2}\theta_{n}d\omega_0=0,$
   we conclude    that $g_n$ is a  K\"ahler metric on $\mathcal M_F$ with   $g_n=\pi^*g$ on $\mathcal M_F\setminus U_n(K).$
   Let   $\tilde m, \tilde m_{n}$ stand for     the Riemannian  measures  
 of $\pi^*g, g_n,$  respectively. 
  Since   ${\rm codim}_{\mathbb C}\mathscr B_F=1$ and  $g_n\to \pi^*g$    as $n\to+\infty,$  
          we   deduce that       $\tilde m_{n}\to \tilde m$ weakly as $n\to+\infty.$
                      Let $\tilde\Delta, \tilde{\Delta}_n$ 
                         stand for       the Laplace-Beltrami operators
         of $\pi^*g, g_n,$  respectively,  in which   $\tilde{\Delta}$ is well-defined on 
                  $\mathcal M_F\setminus K.$ 
                                   According to  $g_n\to \pi^*g$  and   $\tilde m_{n}\to \tilde m$ weakly as $n\to+\infty,$ 
         we   obtain    $\tilde\Delta_{n}\to\tilde\Delta$ weakly as $n\to+\infty.$        
Taking a   precompact  domain $\tilde\Omega$ with smooth boundary $\partial\tilde\Omega$ in  a leaf of $\mathcal M_F.$
We fix a reference point   $\tilde o\in\tilde\Omega$ such that  $\tilde o\not\in K.$
 Denote by    $\tilde g_{\tilde\Omega}(\tilde o, x), \tilde g_{\tilde\Omega, n}(\tilde o, x)$    
   the  Green functions  of  $\tilde\Delta/2, \tilde{\Delta}_n/2$ for
             $\tilde\Omega$ with a pole at $\tilde o$ satisfying Dirichlet boundary condition, respectively,  in which    
                                              $\tilde g_{\tilde\Omega}(\tilde o, x)$ is well-defined  on $\overline{\tilde\Omega}\setminus K.$
                                          Again,   we denote by      $\tilde\pi_{\partial\tilde\Omega}, \tilde\pi_{\partial \tilde\Omega, n}$       
                                      the harmonic measures of $\pi^*g, g_n$ on 
                                                                           $\partial\tilde\Omega$ with respect to $\tilde o$  respectively,  
                                in which      $\tilde\pi_{\partial\tilde\Omega}$  is well-defined  on 
                                                                    $\partial{\tilde\Omega}\setminus K.$ Since  $\mathscr B_F$ is a  polar set, 
                                                                          the previous   arguments can  follow   that 
 $\tilde\Delta_{n}\tilde g_{\tilde\Omega, n}(\tilde o, x)\to \tilde\Delta\tilde g_{\tilde\Omega}(\tilde o, x)$   in the sense of 
  distributions and   
 $\tilde\pi_{\partial \tilde\Omega, n}\to \tilde\pi_{\partial \tilde\Omega}$ weakly  as $n\to+\infty.$  
Applying    Green-Dynkin formula, we get 
$$\int_{\partial \tilde\Omega}u(x)d\tilde\pi_{\partial \tilde\Omega, n}(x)-u(\tilde o)=\frac{1}{2}\int_{\tilde\Omega}\tilde g_{\tilde\Omega, n}(\tilde o,x)\tilde{\Delta}_n u(x)d\tilde v_n(x), \ \ \ \  ^\forall n\geq1$$
 for a $\mathscr C^2$-class function $u$ with that  $u(\tilde o)\not=\infty$ on  $\mathcal M_F$ outside a polar set of singularities at most.  
 Since $\mathscr B_F$ is polar,   we  derive  by letting $n\to+\infty$ that   
 $$\int_{\partial \tilde\Omega}u(x)d\tilde\pi_{\partial \tilde\Omega}(x)-u(\tilde o)=\frac{1}{2}\int_{\tilde\Omega}\tilde g_{\tilde\Omega}(\tilde o,x)\tilde{\Delta} u(x)d\tilde v(x).$$
 It implies    that Dynkin formula works on each leaf of $\mathcal M_F$  under the pullback metric $\pi^*g.$
    According to    Corollary \ref{uniform},   $F$ can  lift to a meromorphic mapping $f: \mathcal M_F\to N$ 
   such that $F=f\circ\pi^{-1}.$  In further, it is not difficult  to examine  that 
 Theorem \ref{first},  Theorem \ref{main1} and Theorem \ref{main2}
 also apply  to $f$  on each leaf of $\mathcal M_F$ under the corresponding conditions. 

\noindent\textbf{{A. Non-positively Curved Case}}~

 With the pullback metric $\pi^*g,$ 
we  see  that  $\mathcal M_F$ has  non-positive  sectional  curvature outside $\pi^{-1}(\mathscr B_F).$
     Let $\tilde\alpha=\pi^*\alpha$ and $\tilde{\mathscr R}=\pi^*{\mathscr R}$  be the K\"ahler form and Ricci form of $\pi^*g,$ respectively,  which are well-defined on $\mathcal M_F\setminus \pi^{-1}(\mathscr B_F).$  
        Given an effective  divisor  $D$  cohomologous to $\omega$ with   associated  function $u_D.$ 
       Review  the Nenvanlinna's functions of $F$ by setting $\Delta(r)=B(r)$  in Section \ref{61}.  

  Let  $\tilde B_1(r),\cdots,\tilde B_\nu(r)$ stand for   the    
  $\nu$ connected components of $\pi^{-1}(B(r))$   centered   at $\tilde o_1,\cdots,\tilde o_\nu$  
with that $\pi(\tilde o_k)=o$ for each $k,$ respectively. 
Define 
$$\tilde g_r(\tilde o_k, x)=g_r(o,\pi(x)), \ \ \ \  d\tilde \pi_{k, r}(x)=d\pi_r(\pi(x)).$$
Note that         outside  $\pi^{-1}(\mathscr B_F),$      $\tilde g_r(\tilde o_k,x)$ is  the  Green function of $\tilde\Delta/2$ for $\tilde B_k(r)$ with a pole at $\tilde o_k$ satisfying  Dirichlet boundary condition,  
and moreover,    $\tilde\pi_{k, r}$ is  the harmonic measure on  $\partial{\tilde B}_k(r)$ with respect to $\tilde o_k.$ 
For each $k,$ put  
  \begin{eqnarray*}
  T_{f, k}(r, \omega)&=& \frac{\pi^m}{(m-1)!}\int_{\tilde B_k(r)}\tilde g_{r}(\tilde o_k,x) f^*\omega\wedge\tilde\alpha^{m-1}, \\
  m_{f,k}(r, D)&=&\int_{\partial{\tilde B}_k(r)} f^*u_Dd\tilde\pi_{r}, \\
        N_{f, k}(r, D)&=&\frac{\pi^m}{(m-1)!}\int_{f^*(D)\cap \tilde B_k(r)}\tilde g_{r}(\tilde o_k,x) \tilde \alpha^{m-1},  \\
      \overline{N}_{f, k}(r, D)&=&\frac{\pi^m}{(m-1)!}\int_{f^{-1}(D)\cap \tilde B_k(r)}\tilde g_{r}(\tilde o_k,x) \tilde \alpha^{m-1}
    \end{eqnarray*}
and
$$T_k(r, \tilde{\mathscr R})= \frac{\pi^m}{(m-1)!}\int_{\tilde B_k(r)}\tilde g_{r}(\tilde o_k,x) \tilde{\mathscr R}\wedge\tilde\alpha^{m-1}.$$
   When $F$ is differentiably non-degenerate,  i.e., at least one single-valued component of $F$ is differentiably non-degenerate, 
we can yield     from  Theorem \ref{main1} that for effective divisors $D_1,\cdots,D_q$ in general position on $N$ such that each $D_j$ is cohomologous to  $\omega,$ 
there is  a subset $E_\delta\subset(0, \infty)$ of  finite Lebesgue measure such that for $k=1,\cdots,\nu$
        \begin{eqnarray}\label{bcg}
&&qT_{f, k}(r,\omega)+T_{f,k}(r, K_N)+T_k(r,  \tilde{\mathscr R}) \\
&\leq& \sum_{j=1}^q\overline N_{f,k}(r,D_j)+O\big(\log T_{f,k}(r, \omega)+\sqrt{-\kappa_k(r)}r+\delta\log r\big) \nonumber
         \end{eqnarray}
    holds for all  $r>1$ outside $E_\delta,$  where $\kappa_k$ is similarly defined as in $(\ref{ricci})$  outside $\pi^{-1}(\mathscr B_F).$          

  \begin{theorem}[Second Main Theorem]\label{main3}  Let $M$ be a  
     complete non-compact  K\"ahler manifold   with non-positive sectional curvature.  
Let $(N,\omega)$ be a   K\"ahler  projective manifold  of  dimension not greater than  that  of $M.$ 
  Let $D_1,\cdots,D_q$ be effective divisors in general position on $N$ such that each $D_j$ is cohomologous to  $\omega.$
  Let  $F: M\rightarrow N$ be a  differentiably non-degenerate $\nu$-valued algebroid mapping.  
  Assume that $q\omega-{\rm Ric}(\omega^n)>0.$
  Then  for any  $\delta>0,$ there exists a subset $E_\delta\subset(1, \infty)$ of finite Lebesgue measure such that 
        \begin{eqnarray*}
&&\left(q-(2\nu-2)\left[\frac{\omega_{FS}|_N}{\omega}\right]\right)T_F(r,\omega)+T_F(r, K_N)+T(r, \mathscr R) \\
&\leq& \sum_{j=1}^q\overline N_F(r,D_j)+O\big(\log T_{F}(r, \omega)+\sqrt{-\kappa(r)}r+\delta\log r\big)
         \end{eqnarray*}
    holds for all  $r>1$ outside $E_\delta,$  where $\kappa$ is defined by $(\ref{ricci}).$          
\end{theorem}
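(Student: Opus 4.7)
The plan is to lift $F$ to a single-valued meromorphic mapping on its uniformization and apply the already-established second main theorem for meromorphic mappings (Theorem \ref{main1}) leaf by leaf on $\mathcal M_F$, then absorb the remaining branching correction using the divisor estimate Theorem \ref{bran}.

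First I would invoke Corollary \ref{uniform} to lift $F$ to a meromorphic mapping $f:\mathcal M_F\to N$ via the natural projection $\pi:\mathcal M_F\to M$, with $F = f\circ\pi^{-1}$. Equipping $\mathcal M_F$ with the pullback metric $\pi^*g$, which is genuinely K\"ahler of non-positive sectional curvature off the polar locus $\pi^{-1}(\mathscr B_F)$, and noting that $f$ is differentiably non-degenerate because $F$ is, I would then apply Theorem \ref{main1} to $f$ on each of the $\nu$ components $\tilde B_k(r)$ of $\pi^{-1}(B(r))$. The Jensen--Dynkin formula is valid on each leaf by the approximation argument already described just before the theorem. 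This yields inequality \eqref{bcg} for every $k=1,\ldots,\nu$ and every $r>1$ outside a set of finite Lebesgue measure.

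Next I would sum the $\nu$ per-leaf inequalities and divide by $\nu$. Using $\tilde g_r(\tilde o_k,\cdot)=g_r(o,\pi(\cdot))$, $\tilde\alpha=\pi^*\alpha$, $\tilde{\mathscr R}=\pi^*\mathscr R$ and the generically $\nu$-to-one property of $\pi$, the characteristic and Ricci pieces collapse to algebroid invariants on $M$:
\begin{align*}
\tfrac{1}{\nu}\sum_k T_{f,k}(r,\omega)=T_F(r,\omega),\quad
\tfrac{1}{\nu}\sum_k T_{f,k}(r,K_N)=T_F(r,K_N),\quad
\tfrac{1}{\nu}\sum_k T_k(r,\tilde{\mathscr R})=T(r,\mathscr R).
\end{align*}
The critical observation is that the actual Ricci form of $\pi^*g$ is $\pi^*\mathscr R-[R_\pi]$, where $R_\pi$ is the ramification divisor of $\pi$; using the identity $\pi_* R_\pi=\mathscr D_F$ one obtains $\tfrac{1}{\nu}\sum_k N_k(r,R_\pi)=N_{\mathrm{bran}}(r,F)$, so that the honest Ricci contribution, after averaging, is $T(r,\mathscr R)-N_{\mathrm{bran}}(r,F)$. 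Combined with the normalization $\tfrac{1}{\nu}\sum_k\overline N_{f,k}(r,D_j)=\overline N_F(r,D_j)$ for the simple counting functions, this averaging delivers
\begin{align*}
qT_F(r,\omega)+T_F(r,K_N)+T(r,\mathscr R)-N_{\mathrm{bran}}(r,F)\le\sum_{j=1}^{q}\overline N_F(r,D_j)+O(\cdots).
\end{align*}

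Finally, I would invoke Theorem \ref{bran} to bound
\begin{align*}
N_{\mathrm{bran}}(r,F)\le(2\nu-2)\!\left[\frac{\omega_{FS}|_N}{\omega}\right]T_F(r,\omega)+O(1),
\end{align*}
and transfer this estimate to absorb the $-N_{\mathrm{bran}}(r,F)$ term on the left into the $T_F(r,\omega)$ coefficient, producing the desired $(q-(2\nu-2)[\omega_{FS}|_N/\omega])T_F(r,\omega)$. The logarithmic remainder is handled via $\log T_{f,k}(r,\omega)=O(\log T_F(r,\omega))$. The main obstacle will be the careful identification of the true Ricci form on the degenerate pullback $(\mathcal M_F,\pi^*g)$---specifically, verifying that its ramification contribution $[R_\pi]$ pushes forward under $\pi$ to the branching divisor $\mathscr D_F$ and therefore yields the clean $-N_{\mathrm{bran}}(r,F)$ correction after averaging; once this identification is secured, the remaining averaging and the application of Theorem \ref{bran} are essentially routine bookkeeping.
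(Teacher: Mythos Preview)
Your proposal is correct and follows the same global strategy as the paper: lift $F$ to $f:\mathcal M_F\to N$, apply Theorem~\ref{main1} leafwise to obtain \eqref{bcg}, average over the $\nu$ sheets, and then invoke Theorem~\ref{bran} to convert the branching term into a multiple of $T_F(r,\omega)$.

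The one substantive difference is \emph{where} you locate the branching correction. The paper declares $\tilde{\mathscr R}=\pi^*\mathscr R$ to be the Ricci form on $\mathcal M_F$ (so that $T_k(r,\tilde{\mathscr R})=T(r,\mathscr R)$ with no defect) and instead inserts the slack in the counting-function comparison
\[
\frac{1}{\nu}\sum_{k=1}^{\nu}\sum_{j=1}^{q}\overline N_{f,k}(r,D_j)\le\sum_{j=1}^{q}\overline N_F(r,D_j)+N_{\rm bran}(r,F).
\]
You take the more honest route: the Ricci \emph{current} of the degenerate metric $\pi^*g$ is $\pi^*\mathscr R-[R_\pi]$, and since $\pi_*R_\pi=\mathscr D_F$ the averaged Ricci term is $T(r,\mathscr R)-N_{\rm bran}(r,F)$, while the simple counting functions average exactly. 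Both bookkeepings produce the same inequality before Theorem~\ref{bran} is applied; your version has the advantage that the appearance of $N_{\rm bran}(r,F)$ is traced to a concrete differential-geometric identity (the Ricci current of a pullback metric) rather than to an ad hoc inequality between counting functions, and this is exactly the point you flag as the ``main obstacle''.
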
  

\begin{proof} It is noticed    that $\pi: \mathcal M_F\setminus{\pi^{-1}(\mathscr B_F)}\rightarrow M\setminus{\mathscr B_F}$  is a $\nu$-sheeted  analytic covering,  
and   two distinct connected components 
  $\tilde B_i(r), \tilde B_j(r)$  of $\pi^{-1}(B(r))$  meet   only at     points  in   
  $\pi^{-1}(\mathscr B_F).$  In addition, 
                                              any  point  in  $F^{-1}(D)$ is possibly  a branch point of $w,$
                                               or   any  point  in  $f^{-1}(D)$ is possibly  a ramification  point of $\pi,$ where $D=D_1+\cdots+D_q.$ 
                                     Hence, it yields  from    $F=f\circ\pi^{-1}$ that   
$$\frac{1}{\nu}\sum_{k=1}^\nu \sum_{j=1}^q\overline{N}_{f,k}(r, D_j)\leq \sum_{j=1}^q\overline{N}_F(r, D_j)+ N_{\rm{bran}}(r, F).$$
Note that  $T(r, \mathscr R)=T_{k}(r,  \tilde{\mathscr R})$ as well as   
$$T_F(r, \omega) \leq  \frac{1}{\nu}\sum_{k=1}^\nu T_{f,k}(r,\omega)\leq T_F(r, \omega)+N_{\rm{bran}}(r,F).$$
Moreover, since  the curvature function on  $M$ is continuous  and  the   curvature of $\pi^*g$ outside $\pi^{-1}(\mathscr B_F)$   matches  the curvature of $g$  
outside $\mathscr B_F$ locally, 
one obtains    $\kappa_k=\kappa$ for  $k=1,\cdots,\nu.$
Put together  the above with (\ref{bcg}),    then 
  for any $\delta>0,$ there exists  a subset $E_\delta\subset(0, \infty)$ of finite Lebesgue measure such that 
   \begin{eqnarray*}
&& qT_F(r, \omega)+T_F(r, K_N)+T(r, \mathscr R)  \\
&\leq& \frac{1}{\nu}\sum_{k=1}^\nu \left(qT_{f,k}(r,\omega)+ T_{f,k}(r, K_N)+ T_{k}(r,  \tilde{\mathscr R})\right) \\
&\leq& \frac{1}{\nu}\sum_{k=1}^\nu\bigg(\sum_{j=1}^q\overline{N}_{f, k}(r, D_j)
+O\big(\log T_{f, k}(r,\omega)+\sqrt{-\kappa_k(r)}r+\delta\log r\big)\bigg) \\
&\leq& \sum_{j=1}^q\overline{N}_{F}(r, D_j)+ N_{\rm{bran}}(r, F) \\
&&  +~O\Big(\log\big(T_F(r, \omega)+N_{\rm{bran}}(r,F)\big)+\sqrt{-\kappa(r)}r+\delta\log r\Big) 
   \end{eqnarray*}
holds for all $r>0$ outside  $E_\delta.$ 
Using  Theorem \ref{bran}, then we   have the theorem proved. 
\end{proof}

Let $\mathscr O(1)$ stand for  the hyperplane line bundle over $\mathbb P^n(\mathbb C).$  
It yields from $c_1(K_{\mathbb P^n(\mathbb C)})=-(n+1)c_1(\mathscr O(1))$ that 
  \begin{cor}\label{hco1}    Let $M$ be a  
     complete non-compact  K\"ahler manifold   with non-positive sectional curvature. 
  Let $H_1,\cdots, H_q$ be hyperplanes  of  $\mathbb P^n(\mathbb C)$ in general position.  
  Let  $F: M\rightarrow \mathbb P^n(\mathbb C)$    be a  differentiably non-degenerate $\nu$-valued algebroid mapping.  
 Assume that $m\geq n.$ Then  for any  $\delta>0,$ there exists a subset $E_\delta\subset(1, \infty)$ of finite Lebesgue measure such that 
        \begin{eqnarray*}
&& \left(q-2\nu-n+1\right)T_F(r,\omega_{FS}) +T(r, \mathscr R) \\
&\leq& \sum_{j=1}^q\overline N_F(r,H_j)+O\big(\log T_{F}(r, \omega_{FS})+\sqrt{-\kappa(r)}r+\delta\log r\big)
        \end{eqnarray*}
    holds for all  $r>1$ outside $E_\delta,$  where $\kappa$ is defined by $(\ref{ricci}).$          
\end{cor}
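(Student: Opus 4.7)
The plan is to derive Corollary \ref{hco1} as a direct specialization of Theorem \ref{main3} to the case $N=\mathbb P^n(\mathbb C)$, $\omega=\omega_{FS}$, and $D_j=H_j$. The main work is simply to verify the hypotheses and compute the coefficients that appear after substitution, since the analytic heavy lifting has already been done in the proof of Theorem \ref{main3}.

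First I would check the cohomological hypotheses. Each hyperplane $H_j\subset\mathbb P^n(\mathbb C)$ represents $c_1(\mathscr O(1))$, and the Fubini--Study form $\omega_{FS}$ is a positive representative of this same class; hence $H_j$ is cohomologous to $\omega_{FS}=\omega$, as required. For the curvature positivity $q\omega-\mathrm{Ric}(\omega^n)>0$, I recall that the Ricci form of the Fubini--Study volume satisfies $\mathrm{Ric}(\omega_{FS}^n)=(n+1)\omega_{FS}$, so this condition reduces to $q>n+1$. In the complementary range $q\leq n+1$, the coefficient $q-2\nu-n+1\leq -2\nu+2\leq 0$ (since $\nu\geq 1$), and the stated inequality is trivial because the right-hand side is non-negative; so I may restrict attention to $q>n+1$, where Theorem \ref{main3} applies. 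The hypothesis $m\geq n$ is inherited directly and ensures the differentiable non-degeneracy condition is meaningful.

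Next I would evaluate the two structural quantities appearing in Theorem \ref{main3}. Taking the Kodaira embedding $\jmath\colon \mathbb P^n(\mathbb C)\hookrightarrow\mathbb P^n(\mathbb C)$ to be the identity gives $\omega_{FS}|_N=\omega_{FS}=\omega$, so
$$\left[\frac{\omega_{FS}|_N}{\omega}\right]=1.$$
For the canonical bundle one has $K_{\mathbb P^n(\mathbb C)}\cong \mathscr O(-(n+1))$, whence $c_1(K_N)$ is represented by $-(n+1)\omega_{FS}$ and therefore
$$T_F(r,K_N)=-(n+1)\,T_F(r,\omega_{FS}).$$

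Substituting these into the conclusion of Theorem \ref{main3} yields
$$\bigl(q-(2\nu-2)\bigr)T_F(r,\omega_{FS})-(n+1)T_F(r,\omega_{FS})+T(r,\mathscr R)\leq\sum_{j=1}^q\overline N_F(r,H_j)+O(\cdots),$$
and collecting the coefficient of $T_F(r,\omega_{FS})$ gives $q-2\nu+2-(n+1)=q-2\nu-n+1$, which is precisely the asserted inequality with the same error term $O(\log T_F(r,\omega_{FS})+\sqrt{-\kappa(r)}\,r+\delta\log r)$. I do not anticipate any genuine obstacle here; the derivation is a bookkeeping reduction, and the only point that warrants care is the Ricci computation $\mathrm{Ric}(\omega_{FS}^n)=(n+1)\omega_{FS}$ together with the resulting dichotomy on $q$.
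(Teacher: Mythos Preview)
Your proposal is correct and follows exactly the paper's approach: the paper derives the corollary from Theorem~\ref{main3} in one line by noting $c_1(K_{\mathbb P^n(\mathbb C)})=-(n+1)c_1(\mathscr O(1))$, and you have simply spelled out the same substitution in more detail (including the computation $[\omega_{FS}|_N/\omega]=1$ via the identity embedding). Your additional remark handling the range $q\le n+1$, where the positivity hypothesis of Theorem~\ref{main3} fails but the inequality is vacuous, is a welcome bit of care that the paper omits.
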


\noindent\textbf{{B. Ricci Non-negatively Curved  Case}}~

Assume that     $M$ is non-parabolic.  
Under the  pullback metric $\pi^*g,$  one can see that $\mathcal M_F$ is also  non-parabolic,  with non-negative Ricci curvature outside $\pi^{-1}(\mathscr B_F).$
 Similarly, 
 we can have      $T_F(r, \omega), m_F(r, D), N_F(r, D), \overline{N}_F(r,D)$ and $T(r, \mathscr R)$ on $\Delta(r)$ defined by (\ref{delta}) in Section \ref{sec421}. 
   By     the similar  arguments as in the proof of Theorem \ref{main3}, we also conclude  from  Theorem \ref{main2} that 

  \begin{theorem}[Second Main Theorem]\label{main4}  Let $M$ be a  non-parabolic 
     complete non-compact  K\"ahler manifold   with non-negative  Ricci curvature.  
Let $(N,\omega)$ be a K\"ahler  projective manifold  of  dimension not greater than  that  of $M.$ 
 Let $D_1,\cdots,D_q$ be effective divisors in general position on $N$ such that each $D_j$ is cohomologous to  $\omega.$
  Let  $F: M\rightarrow N$ be a  differentiably non-degenerate $\nu$-valued algebroid  mapping. 
    Assume that $q\omega-{\rm Ric}(\omega^n)>0.$
   Then  for any  $\delta>0,$ there exists a subset $E_\delta\subset(1, \infty)$ of finite Lebesgue measure such that 
        \begin{eqnarray*}
&&\left(q-(2\nu-2)\left[\frac{\omega_{FS}|_N}{\omega}\right]\right)T_F(r,\omega)+T_F(r, K_N)+T(r, \mathscr R) \\
&\leq& \sum_{j=1}^q\overline N_F(r,D_j)+O\big(\log T_{F}(r, \omega)+\log H(r)+\delta\log r\big)
         \end{eqnarray*}
    holds for all  $r>1$ outside $E_\delta,$  where $H$ is defined by $(\ref{Hr}).$          
\end{theorem}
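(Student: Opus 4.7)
The plan is to mirror the proof of Theorem \ref{main3} verbatim, replacing the non-positively curved calculus estimates (Theorem \ref{cal}, Corollary \ref{cal1}) by their Ricci non-negative counterparts (Theorem \ref{calculus}), so that Theorem \ref{main2} takes over the role played by Theorem \ref{main1} in the sectional curvature setting. Since $M$ is non-parabolic and Ricci non-negative, I first replace the balls $B(r)$ by the sublevel sets $\Delta(r)$ defined in Section \ref{sec421}, and I equip $\mathcal M_F$ with the pullback metric $\pi^*g$, which by the discussion at the beginning of Section \ref{63} is K\"ahler outside $\pi^{-1}(\mathscr B_F)$, has non-negative Ricci curvature there, and on which the Jensen--Dynkin formula still applies leaf-by-leaf (the degeneracy locus being a polar set in $\mathcal M_F$).

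Next, via Corollary \ref{uniform} I lift $F$ to a meromorphic mapping $f:\mathcal M_F\to N$ with $F=f\circ\pi^{-1}$. I decompose $\pi^{-1}(\Delta(r))$ into its $\nu$ connected components $\tilde\Delta_1(r),\cdots,\tilde\Delta_\nu(r)$ around the lifts $\tilde o_1,\cdots,\tilde o_\nu$ of $o$, and define leafwise Nevanlinna functions $T_{f,k}(r,\omega)$, $T_{f,k}(r,K_N)$, $T_k(r,\tilde{\mathscr R})$, $\overline N_{f,k}(r,D_j)$ exactly as in the non-positively curved case, using the Green functions $\tilde g_r(\tilde o_k,x)=g_r(o,\pi(x))$ and the pullback harmonic measures $d\tilde\pi_{k,r}$. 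Since at least one single-valued component of $F$ is differentiably non-degenerate, so is $f$ restricted to at least one leaf, and Theorem \ref{main2} applies on each leaf to give, for $k=1,\cdots,\nu$,
\begin{eqnarray*}
&& qT_{f,k}(r,\omega)+T_{f,k}(r,K_N)+T_k(r,\tilde{\mathscr R}) \\
&\leq& \sum_{j=1}^q\overline N_{f,k}(r,D_j)+O\bigl(\log T_{f,k}(r,\omega)+\log H(r)+\delta\log r\bigr)
\end{eqnarray*}
for $r>1$ outside a set $E_\delta$ of finite Lebesgue measure.

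I then average over the $\nu$ leaves. Because $\pi:\mathcal M_F\setminus\pi^{-1}(\mathscr B_F)\to M\setminus\mathscr B_F$ is an unramified $\nu$-sheeted covering, the identities (and inequalities counted with multiplicities concentrated on the ramification locus)
\[
T_F(r,\omega)\leq\frac{1}{\nu}\sum_{k=1}^\nu T_{f,k}(r,\omega)\leq T_F(r,\omega)+N_{\rm bran}(r,F),
\]
\[
T_F(r,K_N)=\frac{1}{\nu}\sum_{k=1}^\nu T_{f,k}(r,K_N)+O(1),\qquad T(r,\mathscr R)=T_k(r,\tilde{\mathscr R}),
\]
\[
\frac{1}{\nu}\sum_{k=1}^\nu\sum_{j=1}^q\overline N_{f,k}(r,D_j)\leq\sum_{j=1}^q\overline N_F(r,D_j)+N_{\rm bran}(r,F)
\]
hold, the last because any preimage of a point in $\mathrm{Supp}\,D$ that is also a branch point of $F$ is counted once on the right of the leafwise sum but at most $\nu$ times on the left of it. Summing the leafwise estimates and using these comparisons yields
\begin{eqnarray*}
qT_F(r,\omega)+T_F(r,K_N)+T(r,\mathscr R)
&\leq& \sum_{j=1}^q\overline N_F(r,D_j)+N_{\rm bran}(r,F)\\
&& +O\bigl(\log T_F(r,\omega)+\log H(r)+\delta\log r\bigr).
\end{eqnarray*}
Finally I absorb the branching term using Theorem \ref{bran}, which bounds $N_{\rm bran}(r,F)$ by $(2\nu-2)[\omega_{FS}|_N/\omega]\,T_F(r,\omega)+O(1)$, and move it to the left-hand side to produce the claimed factor $q-(2\nu-2)[\omega_{FS}|_N/\omega]$ in front of $T_F(r,\omega)$.

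The only genuine obstacle is the bookkeeping check that Theorem \ref{main2} really does apply leafwise on $(\mathcal M_F,\pi^*g)$, because $\pi^*g$ degenerates on $\pi^{-1}(\mathscr B_F)$ and so is not a genuine K\"ahler metric. The justification is that, as explained at the opening of Section \ref{63}, one approximates $\pi^*g$ by the true K\"ahler metrics $g_n=\pi^*g+n^{-2}\theta_n g_0$ and passes to the weak limit in Jensen--Dynkin on each leaf; since $\pi^{-1}(\mathscr B_F)$ has complex codimension one (hence is polar) all the integrals defining $T_{f,k}, \overline N_{f,k}, T_k(\cdot,\tilde{\mathscr R})$ converge and the Ricci non-negativity and non-parabolicity needed for Li--Yau type estimates (and hence for Theorem \ref{calculus}) persist in the limit on each leaf. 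Everything else in the argument is a direct transcription of the proof of Theorem \ref{main3} with the Ricci non-negative calculus lemma substituted for the sectional curvature one.
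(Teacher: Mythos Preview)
Your proposal is correct and follows exactly the approach the paper takes: the paper's own proof of Theorem \ref{main4} is a single sentence stating that the argument of Theorem \ref{main3} goes through verbatim once Theorem \ref{main2} (with its calculus lemma Theorem \ref{calculus} over the domains $\Delta(r)$) is substituted for Theorem \ref{main1}, and you have written out precisely that substitution, including the leafwise application on $(\mathcal M_F,\pi^*g)$ justified by the approximation $g_n=\pi^*g+n^{-2}\theta_n g_0$ from the opening of Section \ref{63}, the averaging inequalities relating $T_F$, $\overline N_F$ to $T_{f,k}$, $\overline N_{f,k}$, and the absorption of $N_{\rm bran}(r,F)$ via Theorem \ref{bran}.
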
  

Similarly, we have: 
  \begin{cor}\label{hco2}  Let $M$ be a  non-parabolic 
     complete non-compact  K\"ahler manifold   with non-negative  Ricci curvature.  
  Let $H_1,\cdots, H_q$ be hyperplanes   of $\mathbb P^n(\mathbb C)$ in general position.  
  Let  $F: M\rightarrow \mathbb P^n(\mathbb C)$ be a  differentiably non-degenerate $\nu$-valued algebroid  mapping. 
  Assume that $m\geq n.$ Then  for any  $\delta>0,$ there exists a subset $E_\delta\subset(1, \infty)$ of finite Lebesgue measure such that 
     \begin{eqnarray*}
&& \left(q-2\nu-n+1\right)T_F(r,\omega_{FS}) +T(r, \mathscr R) \\
&\leq& \sum_{j=1}^q\overline N_F(r,H_j)+O\big(\log T_{F}(r, \omega_{FS})+\log H(r)+\delta\log r\big)
        \end{eqnarray*}
    holds for all  $r>1$ outside $E_\delta,$  where $H$ is defined by $(\ref{Hr}).$          
\end{cor}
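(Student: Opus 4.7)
The plan is to deduce this corollary as a direct specialization of Theorem \ref{main4} to the case $N = \mathbb{P}^n(\mathbb{C})$ with $\omega = \omega_{FS}$ and $D_j = H_j$. The conceptual work has two parts: (i) verifying that the hypotheses of Theorem \ref{main4} are satisfied in this projective/hyperplane setting, and (ii) simplifying the two curvature terms on the left-hand side using the explicit geometry of $\mathbb{P}^n(\mathbb{C})$.

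First I would check the hypotheses. Each hyperplane $H_j$ is the zero divisor of a nonzero global section of the hyperplane line bundle $\mathscr{O}(1)$, hence is effective with cohomology class $c_1(\mathscr{O}(1)) = [\omega_{FS}]$, so $H_j$ is cohomologous to $\omega_{FS}$; the general-position hypothesis on the $H_j$ passes unchanged to the divisor setting, and $m\ge n$ is precisely the dimension condition of Theorem \ref{main4}. Since $K_{\mathbb{P}^n(\mathbb{C})} \simeq \mathscr{O}(-n-1)$, one has $\mathrm{Ric}(\omega_{FS}^n) = (n+1)\omega_{FS}$, so the Griffiths-type positivity $q\omega - \mathrm{Ric}(\omega^n) > 0$ reduces to $q \geq n+2$; when this fails the LHS coefficient $q-2\nu-n+1$ is already non-positive and the stated inequality is trivial, so no generality is lost by invoking Theorem \ref{main4} only in the range where it applies.

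Next I would simplify the two curvature-type contributions. Since $N = \mathbb{P}^n(\mathbb{C})$ is embedded in itself by the identity and $\omega = \omega_{FS}$, we have $\omega_{FS}|_N = \omega$, hence $[\omega_{FS}|_N/\omega] = 1$ and the leading coefficient $q - (2\nu-2)[\omega_{FS}|_N/\omega]$ of Theorem \ref{main4} becomes $q - 2\nu + 2$. The canonical-bundle characteristic reduces via $K_{\mathbb{P}^n} \simeq \mathscr{O}(-n-1)$ to
\[
T_F(r, K_N) = -(n+1)\,T_F(r, \omega_{FS}).
\]
Adding these two pieces yields
\[
(q - 2\nu + 2)\,T_F(r, \omega_{FS}) - (n+1)\,T_F(r, \omega_{FS}) = (q - 2\nu - n + 1)\,T_F(r, \omega_{FS}),
\]
which is exactly the coefficient of $T_F(r, \omega_{FS})$ appearing in the corollary.

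Substituting this back into the conclusion of Theorem \ref{main4} (with $D_j$ replaced by $H_j$) produces the asserted inequality, including the error term $O(\log T_F(r,\omega_{FS}) + \log H(r) + \delta\log r)$ and the exceptional set $E_\delta$ of finite Lebesgue measure. There is no genuine obstacle in this argument: all of the analytic difficulty — the Jensen--Dynkin formula on $\mathcal{M}_F$, the calculus lemma for the non-parabolic Ricci non-negative setting, the branch divisor estimate via Theorem \ref{bran}, and the Carlson--Griffiths-style computation — is absorbed into Theorem \ref{main4}. The remaining step is purely a bookkeeping identification of the line bundle $K_{\mathbb{P}^n(\mathbb{C})}$ and the restriction $\omega_{FS}|_N = \omega_{FS}$.
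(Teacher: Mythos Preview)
Your proposal is correct and matches the paper's approach: the paper derives Corollary~\ref{hco2} from Theorem~\ref{main4} by the single observation (stated before Corollary~\ref{hco1}) that $c_1(K_{\mathbb P^n(\mathbb C)})=-(n+1)c_1(\mathscr O(1))$, together with the tautology $[\omega_{FS}|_{\mathbb P^n}/\omega_{FS}]=1$. One small caveat: your claim that the inequality is ``trivial'' when $q\le n+1$ is not quite immediate, since the left side still carries the nonnegative term $T(r,\mathscr R)$; the paper simply leaves this edge case unaddressed, implicitly carrying over the hypothesis $q\omega_{FS}-\mathrm{Ric}(\omega_{FS}^n)>0$, i.e.\ $q\ge n+2$.
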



\vskip\baselineskip


\begin{thebibliography}{99}
\vskip\baselineskip
\bibitem{at2018a} A. Atsuji,  Nevanlinna-type theorems for meromorphic functions on non-positively curved K\"ahler manifolds, Forum Math. (1) \textbf{30} (2018), 171-189.
\bibitem{gri} J. Carlson and P. Griffiths, A defect relation for equidimensional holomorphic mappings between algebraic varieties, Ann. Math. \textbf{95} (1972), 557-584. 
\bibitem{C-Y} S. Y. Cheng and S. T. Yau, Differential equations on Riemannian manifolds and their geometric applications, Comm. Pure Appl. Math. \textbf{28} (1975), 333-354.
\bibitem{Deb} A. Debiard, B. Gaveau and E. Mazet, Theorems de comparaison en geometrie Riemannienne, Publ. Res. Inst. Math. Sci. Kyoto, \textbf{12} (1976),
390-425;  \textbf{111} (2019), 303-314.
\bibitem{Dong}  X. J. Dong, Carlson-Griffiths theory for complete K\"ahler manifolds, J. Inst. Math. Jussieu, \textbf{22} (2023), 2337-2365.
\bibitem{DY} X. J. Dong and  S. S. Yang, Nevanlinna theory via holomorphic forms, Pacific J. Math.  (1) \textbf{319} (2022), 55-74. 
\bibitem{HY} P. C. Hu and C. C. Yang, The second main theorem for algebroid functions, Math. Z. \textbf{220} (1995), 99-126.
\bibitem{H-X} Y. Z. He and X. Z. Xiao, Algebroid functions and ordinary differential equations (Chinese),  2nd, Science Press, Beijing,  China, (2016). 
\bibitem{Laine} I. Laine, Nevanlinna Theory and Complex Differential Equations,  De Gruyter Studies in Math. (1993).
\bibitem{Li-Yau} P. Li and S. T. Yau,  On the parabolic kernel of the Schr\"odinger operator, Acta Math.  \textbf{156} (1986), 153-201.
\bibitem{No0} J. Noguchi, On the deficiencies and the existence of Picard's exceptional values of entire algebroid functions, Kodai. Math. Sem. Report, \textbf{26} (1974), 29-35.
\bibitem{No} J. Noguchi  and J. Winkelmann, Nevanlinna theory in several complex variables and Diophantine approximation,  A series of comprehensive studies in mathematics, Springer, (2014).
\bibitem{NO} K. Niino and M. Ozawa,  Deficiencies of an entire algebroid function, Kodai Math. Sem. Report, \textbf{22} (1970), 98-113.
 \bibitem{Oz} M. Ozawa, On the growth of algebroid functions with several deficiencies II, Kodai Math. Sem. Rep., \textbf{22} (1970), 129-137. 
 \bibitem{P1} A. C. Peterson, Comparison theorems and existence theorem for ordinary differential equations, J. Math. Anal. Appl. \textbf{55} (1976), 773-784. 
  \bibitem{GR} G. R\'emoundos, Extension aux fonctions alg\'ebroides multipliformes du th\'eor\`eme de M. Picard et de ses g\'en\'eralisations, 
M\'em. Sci. Math., fase Paris, Gauthier-Villar, \textbf{23} (1927).
 \bibitem{ru} M. Ru, Nevanlinna Theory and Its Relation to Diophantine Approximation, 2nd edn. World Scientific Publishing, (2021). 
 \bibitem{ru0} M. Ru, Algebroid functions, Wirsing's theorem and their relations, Math. Z. \textbf{233} (2000), 137-148.
 \bibitem{Sha} B. V. Shabat, Distribution of values of holomorphic mappings, Amer. Math. Soc.  (1985).
 \bibitem{S-G} D. C. Sun and Z. S. Gao, Value distribution of algebroid functions (Chinese),  Science Press, Beijing,  China, (2014). 
 \bibitem{JS} J. Suzuki, On picard values of algebroid functions in a neighborhood of a totally disconnected compact set,   Nagoya Math. J. \textbf{40} (1970), 1-12. 
 \bibitem{LS1} L. Selberg, \"Uber die Wertverteilung der Algebroide Funktionen, Math. Z. \textbf{31} (1930), 709-728. 
\bibitem{LS2} L. Selberg, Algebroide Funktionen und Umkehrfunktionen Abelscher Integrale, Ark. Norskes Vid. Akad. Oslo, \textbf{8} (1943), 1-72. 
\bibitem{S-Y} R. Schoen and S. T. Yau,  Lectures on Differential Geometry,  International Press, (2010). 
\bibitem{Sa0} T. Sasaki, On the Green Function of a Complete Riemannian or K\"ahler manifold with Asymptotically Negative Constant Curvature and Applications, Adv. Stud. in Pure Math. \textbf{3} (1984), 387-421.
\bibitem{To1} N. Toda, Sur l'ensemble d'adh\'erence fine des fonctions alg\'ebroldes, Nagoya Math. J. \textbf{30} (1966), 295-302. 
\bibitem{To2} N. Toda, Sur  les directions de Julia et Borel des fonctions alg\'ebroldes,  Nagoya Math. J. \textbf{34} (1969), 1-23. 
\bibitem{EU} E. Ullrich, \"Uber den Einfluss der Verzweightheit einer Algebroide auf ihre Wertverteilung. J. rei. u. ang. Math. \textbf{167} (1931), 198-220. 
\bibitem{GV}  G. Valiron, Sur quelques  propri\'et\'es des fonctions alg\'ebroides, C. R. Acad. Sci. Paris, \textbf{189} (1927), 824-826. 
\end{thebibliography}
\end{document}